\begin{document}
\theoremstyle{plain}
\newtheorem{thm}{Theorem}[section]
\newtheorem*{thm1}{Theorem 1}
\newtheorem*{thm2}{Theorem 2}
\newtheorem{lemma}[thm]{Lemma}
\newtheorem{lem}[thm]{Lemma}
\newtheorem{cor}[thm]{Corollary}
\newtheorem{prop}[thm]{Proposition}
\newtheorem{propose}[thm]{Proposition}
\newtheorem{variant}[thm]{Variant}
\theoremstyle{definition}
\newtheorem{notations}[thm]{Notations}
\newtheorem{rem}[thm]{Remark}
\newtheorem{rmk}[thm]{Remark}
\newtheorem{rmks}[thm]{Remarks}
\newtheorem{defn}[thm]{Definition}
\newtheorem{ex}[thm]{Example}
\newtheorem{claim}[thm]{Claim}
\newtheorem{ass}[thm]{Assumption}
\numberwithin{equation}{section}
\newcounter{elno}                
\def\points{\list
{\hss\llap{\upshape{(\roman{elno})}}}{\usecounter{elno}}} 
\let\endpoints=\endlist


\catcode`\@=11
%
%
\def\opn#1#2{\def#1{\mathop{\kern0pt\fam0#2}\nolimits}} 
\def\bold#1{{\bf #1}}%
\def\underrightarrow{\mathpalette\underrightarrow@}
\def\underrightarrow@#1#2{\vtop{\ialign{$##$\cr
 \hfil#1#2\hfil\cr\noalign{\nointerlineskip}%
 #1{-}\mkern-6mu\cleaders\hbox{$#1\mkern-2mu{-}\mkern-2mu$}\hfill
 \mkern-6mu{\to}\cr}}}
\let\underarrow\underrightarrow
\def\underleftarrow{\mathpalette\underleftarrow@}
\def\underleftarrow@#1#2{\vtop{\ialign{$##$\cr
 \hfil#1#2\hfil\cr\noalign{\nointerlineskip}#1{\leftarrow}\mkern-6mu
 \cleaders\hbox{$#1\mkern-2mu{-}\mkern-2mu$}\hfill
 \mkern-6mu{-}\cr}}}
%
%

%
\def\:{\colon}
\let\oldtilde=\tilde
\def\tilde#1{\mathchoice{\widetilde{#1}}{\widetilde{#1}}%
{\indextil{#1}}{\oldtilde{#1}}}
\def\indextil#1{\lower2pt\hbox{$\textstyle{\oldtilde{\raise2pt%
\hbox{$\scriptstyle{#1}$}}}$}}
\def\pnt{{\raise1.1pt\hbox{$\textstyle.$}}}
%

%
\let\amp@rs@nd@\relax
\newdimen\ex@\ex@.2326ex
\newdimen\bigaw@
\newdimen\minaw@
\minaw@16.08739\ex@
\newdimen\minCDaw@
\minCDaw@2.5pc
\newif\ifCD@
\def\minCDarrowwidth#1{\minCDaw@#1}
\newenvironment{CD}{\@CD}{\@endCD}
\def\@CD{\def\A##1A##2A{\llap{$\vcenter{\hbox
 {$\scriptstyle##1$}}$}\Big\uparrow\rlap{$\vcenter{\hbox{%
$\scriptstyle##2$}}$}&&}%
\def\V##1V##2V{\llap{$\vcenter{\hbox
 {$\scriptstyle##1$}}$}\Big\downarrow\rlap{$\vcenter{\hbox{%
$\scriptstyle##2$}}$}&&}%
\def\={&\hskip.5em\mathrel
 {\vbox{\hrule width\minCDaw@\vskip3\ex@\hrule width
 \minCDaw@}}\hskip.5em&}%
\def\verteq{\Big\Vert&&}%
\def\noarr{&&}%
\def\vspace##1{\noalign{\vskip##1\relax}}\relax\let\amp@rs@nd@&\iffalse}\fi
 \CD@true\vcenter\bgroup\relax\let\\=\cr\iffalse}\fi\tabskip\z@skip\baselineskip20\ex@
 \lineskip3\ex@\lineskiplimit3\ex@\halign\bgroup
 &\hfill$\m@th##$\hfill\cr}
\def\@endCD{\cr\egroup\egroup}
%
\def\>#1>#2>{\amp@rs@nd@\setbox\z@\hbox{$\scriptstyle
 \;{#1}\;\;$}\setbox\@ne\hbox{$\scriptstyle\;{#2}\;\;$}\setbox\tw@
 \hbox{$#2$}\ifCD@
 \global\bigaw@\minCDaw@\else\global\bigaw@\minaw@\fi
 \ifdim\wd\z@>\bigaw@\global\bigaw@\wd\z@\fi
 \ifdim\wd\@ne>\bigaw@\global\bigaw@\wd\@ne\fi
 \ifCD@\hskip.5em\fi
 \ifdim\wd\tw@>\z@
 \mathrel{\mathop{\hbox to\bigaw@{\rightarrowfill}}\limits^{#1}_{#2}}\else
 \mathrel{\mathop{\hbox to\bigaw@{\rightarrowfill}}\limits^{#1}}\fi
 \ifCD@\hskip.5em\fi\amp@rs@nd@}
\def\<#1<#2<{\amp@rs@nd@\setbox\z@\hbox{$\scriptstyle
 \;\;{#1}\;$}\setbox\@ne\hbox{$\scriptstyle\;\;{#2}\;$}\setbox\tw@
 \hbox{$#2$}\ifCD@
 \global\bigaw@\minCDaw@\else\global\bigaw@\minaw@\fi
 \ifdim\wd\z@>\bigaw@\global\bigaw@\wd\z@\fi
 \ifdim\wd\@ne>\bigaw@\global\bigaw@\wd\@ne\fi
 \ifCD@\hskip.5em\fi
 \ifdim\wd\tw@>\z@
 \mathrel{\mathop{\hbox to\bigaw@{\leftarrowfill}}\limits^{#1}_{#2}}\else
 \mathrel{\mathop{\hbox to\bigaw@{\leftarrowfill}}\limits^{#1}}\fi
 \ifCD@\hskip.5em\fi\amp@rs@nd@}
%
%
\newenvironment{CDS}{\@CDS}{\@endCDS}
\def\@CDS{\def\A##1A##2A{\llap{$\vcenter{\hbox
 {$\scriptstyle##1$}}$}\Big\uparrow\rlap{$\vcenter{\hbox{%
$\scriptstyle##2$}}$}&}%
\def\V##1V##2V{\llap{$\vcenter{\hbox
 {$\scriptstyle##1$}}$}\Big\downarrow\rlap{$\vcenter{\hbox{%
$\scriptstyle##2$}}$}&}%
\def\={&\hskip.5em\mathrel
 {\vbox{\hrule width\minCDaw@\vskip3\ex@\hrule width
 \minCDaw@}}\hskip.5em&}
\def\verteq{\Big\Vert&}
\def\novarr{&}
\def\noharr{&&}
\def\SE##1E##2E{\slantedarrow(0,18)(4,-3){##1}{##2}&}
\def\SW##1W##2W{\slantedarrow(24,18)(-4,-3){##1}{##2}&}
\def\NE##1E##2E{\slantedarrow(0,0)(4,3){##1}{##2}&}
\def\NW##1W##2W{\slantedarrow(24,0)(-4,3){##1}{##2}&}
\def\slantedarrow(##1)(##2)##3##4{%
\thinlines\unitlength1pt\lower 6.5pt\hbox{\begin{picture}(24,18)%
\put(##1){\vector(##2){24}}%
\put(0,8){$\scriptstyle##3$}%
\put(20,8){$\scriptstyle##4$}%
\end{picture}}}
\def\vspace##1{\noalign{\vskip##1\relax}}\relax\let\amp@rs@nd@&\iffalse}\fi
 \CD@true\vcenter\bgroup\relax\let\\=\cr\iffalse}\fi\tabskip\z@skip\baselineskip20\ex@
 \lineskip3\ex@\lineskiplimit3\ex@\halign\bgroup
 &\hfill$\m@th##$\hfill\cr}
\def\@endCDS{\cr\egroup\egroup}
%
\newdimen\TriCDarrw@
\newif\ifTriV@
\newenvironment{TriCDV}{\@TriCDV}{\@endTriCD}
\newenvironment{TriCDA}{\@TriCDA}{\@endTriCD}
\def\@TriCDV{\TriV@true\def\TriCDpos@{6}\@TriCD}
\def\@TriCDA{\TriV@false\def\TriCDpos@{10}\@TriCD}
\def\@TriCD#1#2#3#4#5#6{%
\setbox0\hbox{$\ifTriV@#6\else#1\fi$}
\TriCDarrw@=\wd0 \advance\TriCDarrw@ 24pt
\advance\TriCDarrw@ -1em
\def\SE##1E##2E{\slantedarrow(0,18)(2,-3){##1}{##2}&}
\def\SW##1W##2W{\slantedarrow(12,18)(-2,-3){##1}{##2}&}
\def\NE##1E##2E{\slantedarrow(0,0)(2,3){##1}{##2}&}
\def\NW##1W##2W{\slantedarrow(12,0)(-2,3){##1}{##2}&}

\def\slantedarrow(##1)(##2)##3##4{\thinlines\unitlength1pt
\lower 6.5pt\hbox{\begin{picture}(12,18)%
\put(##1){\vector(##2){12}}%
\put(-4,\TriCDpos@){$\scriptstyle##3$}%
\put(12,\TriCDpos@){$\scriptstyle##4$}%
\end{picture}}}
\def\={\mathrel {\vbox{\hrule
   width\TriCDarrw@\vskip3\ex@\hrule width
   \TriCDarrw@}}}
\def\>##1>>{\setbox\z@\hbox{$\scriptstyle
 \;{##1}\;\;$}\global\bigaw@\TriCDarrw@
 \ifdim\wd\z@>\bigaw@\global\bigaw@\wd\z@\fi
 \hskip.5em
 \mathrel{\mathop{\hbox to \TriCDarrw@
{\rightarrowfill}}\limits^{##1}}
 \hskip.5em}
\def\<##1<<{\setbox\z@\hbox{$\scriptstyle
 \;{##1}\;\;$}\global\bigaw@\TriCDarrw@
 \ifdim\wd\z@>\bigaw@\global\bigaw@\wd\z@\fi
 \mathrel{\mathop{\hbox to\bigaw@{\leftarrowfill}}\limits^{##1}}
 }
 \CD@true\vcenter\bgroup\relax\let\\=\cr\iffalse}\fi
 \tabskip\z@skip\baselineskip20\ex@
 \lineskip3\ex@\lineskiplimit3\ex@
 \ifTriV@
 \halign\bgroup
 &\hfill$\m@th##$\hfill\cr
#1&\multispan3\hfill$#2$\hfill&#3\\
&#4&#5\\
&&#6\cr\egroup%
\else
 \halign\bgroup
 &\hfill$\m@th##$\hfill\cr
&&#1\\%
&#2&#3\\
#4&\multispan3\hfill$#5$\hfill&#6\cr\egroup
\fi}
\def\@endTriCD{\egroup} 
\newcommand{\mc}{\mathcal} 
\newcommand{\mb}{\mathbb} 
\newcommand{\surj}{\twoheadrightarrow} 
\newcommand{\inj}{\hookrightarrow} \newcommand{\zar}{{\rm zar}} 
\newcommand{\an}{{\rm an}} \newcommand{\red}{{\rm red}} 
\newcommand{\Rank}{{\rm rk}} \newcommand{\codim}{{\rm codim}} 
\newcommand{\rank}{{\rm rank}} \newcommand{\Ker}{{\rm Ker \ }} 
\newcommand{\Pic}{{\rm Pic}} \newcommand{\Div}{{\rm Div}} 
\newcommand{\Hom}{{\rm Hom}} \newcommand{\im}{{\rm im}} 
\newcommand{\Spec}{{\rm Spec \,}} \newcommand{\Sing}{{\rm Sing}} 
\newcommand{\sing}{{\rm sing}} \newcommand{\reg}{{\rm reg}} 
\newcommand{\Char}{{\rm char}} \newcommand{\Tr}{{\rm Tr}} 
\newcommand{\Gal}{{\rm Gal}} \newcommand{\Min}{{\rm Min \ }} 
\newcommand{\Max}{{\rm Max \ }} \newcommand{\Alb}{{\rm Alb}\,} 
\newcommand{\GL}{{\rm GL}\,} 
\newcommand{\ie}{{\it i.e.\/},\ } \newcommand{\niso}{\not\cong} 
\newcommand{\nin}{\not\in} 
\newcommand{\soplus}[1]{\stackrel{#1}{\oplus}} 
\newcommand{\by}[1]{\stackrel{#1}{\rightarrow}} 
\newcommand{\longby}[1]{\stackrel{#1}{\longrightarrow}} 
\newcommand{\vlongby}[1]{\stackrel{#1}{\mbox{\large{$\longrightarrow$}}}} 
\newcommand{\ldownarrow}{\mbox{\Large{\Large{$\downarrow$}}}} 
\newcommand{\lsearrow}{\mbox{\Large{$\searrow$}}} 
\renewcommand{\d}{\stackrel{\mbox{\scriptsize{$\bullet$}}}{}} 
\newcommand{\dlog}{{\rm dlog}\,} 
\newcommand{\longto}{\longrightarrow} 
\newcommand{\vlongto}{\mbox{{\Large{$\longto$}}}} 
\newcommand{\limdir}[1]{{\displaystyle{\mathop{\rm lim}_{\buildrel\longrightarrow\over{#1}}}}\,} 
\newcommand{\liminv}[1]{{\displaystyle{\mathop{\rm lim}_{\buildrel\longleftarrow\over{#1}}}}\,} 
\newcommand{\norm}[1]{\mbox{$\parallel{#1}\parallel$}} 
\newcommand{\boxtensor}{{\Box\kern-9.03pt\raise1.42pt\hbox{$\times$}}} 
\newcommand{\into}{\hookrightarrow} \newcommand{\image}{{\rm image}\,} 
\newcommand{\Lie}{{\rm Lie}\,} 
\newcommand{\CM}{\rm CM}
\newcommand{\sext}{\mbox{${\mathcal E}xt\,$}} 
\newcommand{\shom}{\mbox{${\mathcal H}om\,$}} 
\newcommand{\coker}{{\rm coker}\,} 
\newcommand{\sm}{{\rm sm}} 
\newcommand{\tensor}{\otimes} 
\renewcommand{\iff}{\mbox{ $\Longleftrightarrow$ }} 
\newcommand{\supp}{{\rm supp}\,} 
\newcommand{\ext}[1]{\stackrel{#1}{\wedge}} 
\newcommand{\onto}{\mbox{$\,\>>>\hspace{-.5cm}\to\hspace{.15cm}$}} 
\newcommand{\propsubset} {\mbox{$\textstyle{ 
\subseteq_{\kern-5pt\raise-1pt\hbox{\mbox{\tiny{$/$}}}}}$}} 
\newcommand{\sA}{{\mathcal A}} 
\newcommand{\sB}{{\mathcal B}} \newcommand{\sC}{{\mathcal C}} 
\newcommand{\sD}{{\mathcal D}} \newcommand{\sE}{{\mathcal E}} 
\newcommand{\sF}{{\mathcal F}} \newcommand{\sG}{{\mathcal G}} 
\newcommand{\sH}{{\mathcal H}} \newcommand{\sI}{{\mathcal I}} 
\newcommand{\sJ}{{\mathcal J}} \newcommand{\sK}{{\mathcal K}} 
\newcommand{\sL}{{\mathcal L}} \newcommand{\sM}{{\mathcal M}} 
\newcommand{\sN}{{\mathcal N}} \newcommand{\sO}{{\mathcal O}} 
\newcommand{\sP}{{\mathcal P}} \newcommand{\sQ}{{\mathcal Q}} 
\newcommand{\sR}{{\mathcal R}} \newcommand{\sS}{{\mathcal S}} 
\newcommand{\sT}{{\mathcal T}} \newcommand{\sU}{{\mathcal U}} 
\newcommand{\sV}{{\mathcal V}} \newcommand{\sW}{{\mathcal W}} 
\newcommand{\sX}{{\mathcal X}} \newcommand{\sY}{{\mathcal Y}} 
\newcommand{\sZ}{{\mathcal Z}} \newcommand{\ccL}{\sL} 
 \newcommand{\A}{{\mathbb A}} \newcommand{\B}{{\mathbb 
B}} \newcommand{\C}{{\mathbb C}} \newcommand{\D}{{\mathbb D}} 
\newcommand{\E}{{\mathbb E}} \newcommand{\F}{{\mathbb F}} 
\newcommand{\G}{{\mathbb G}} \newcommand{\HH}{{\mathbb H}} 
\newcommand{\I}{{\mathbb I}} \newcommand{\J}{{\mathbb J}} 
\newcommand{\M}{{\mathbb M}} \newcommand{\N}{{\mathbb N}} 
\renewcommand{\P}{{\mathbb P}} \newcommand{\Q}{{\mathbb Q}} 

\newcommand{\R}{{\mathbb R}} \newcommand{\T}{{\mathbb T}} 
\newcommand{\U}{{\mathbb U}} \newcommand{\V}{{\mathbb V}} 
\newcommand{\W}{{\mathbb W}} \newcommand{\X}{{\mathbb X}} 
\newcommand{\Y}{{\mathbb Y}} \newcommand{\Z}{{\mathbb Z}} 

\title{Hilbert-Kunz density function for  graded domains}

\author{Vijaylaxmi Trivedi and Kei-Ichi Watanabe}
\date{}
\address{School of Mathematics, Tata Institute of Fundamental Research, 
Homi Bhabha Road, Mumbai-40005, India }
\email{vija@math.tifr.res.in}
\address{Department of Mathematics, College of Humanities and Sciences, 
Nihon University, Setagaya-Ku, Tokyo 156-0045, Japan}
\email{watanabe@math.chs.nihon-u.ac.jp}
\thanks{}
\subjclass{}
\begin{abstract}We prove the existence of HK density function for a pair
$(R, I)$, where $R$ is a $\N$-graded  domain of finite type over a perfect 
field and $I\subset R$ is a graded ideal of finite colength. 
This generalizes our earlier result where one proves the existence of 
such a function for a pair $(R, I)$, where, in addition $R$ is standard 
graded.

As one of the consequences we show that if $G$ is a 
finite group scheme acting 
linearly on a polynomial ring $R$ of dimension $d$  then 
the HK density function $f_{R^G, {\bf m}_G}$, of the pair $(R^G, {\bf m}_G)$, is 
a piecewise polynomial function of degree $d-1$.

We also compute the HK density functions for $(R^G, {\bf m}_G)$, where 
$G\subset SL_2(k)$ is a finite group acting linearly on  the ring $k[X, Y]$.
\end{abstract}

\maketitle
\section{Introduction}

In this paper  a pair  $(R, I)$ is a {\em graded pair} if $R$ is an $\N$-graded domain of dimension $d\geq 2$ and finite type 
over  a perfect field $k$ of 
characteristic $p>0$, and $I$ is  a graded 
ideal of finite colength. The main result here is to prove the 
existence of the HK density function for such a pair.

The notion of HK density function was introduced in [T] for the purpose of 
studying the Hilbert-Kunz multiplicity (or HK multiplicity) $e_{HK}(R, I)$.
Recall that the  notion of HK multiplicity $e_{HK}(R, I)$  was 
introduced by P. Monsky [M] for an arbitrary  Noetherian ring $R$ 
(in characteristic $p>0$) and 
an ideal $I\subset R$ of finite colength. In the same paper he showed that it is  positive real number given by 
$$e_{HK}(R, I) = \lim_{n\to \infty}\frac{\ell(R/I^{[q]})}{q^d}.$$

The HK density function behaves well (when it exists)
for various operations like tensor products, Segre products etc.
Moreover  it is  a limit of a uniformly converging sequence 
(which could be suitably renormalized  to study a given specific property).

In [T], we proved  the existence of HK density function for a {\em standard 
graded pair} $(R,I)$, where by  a standard graded pair we mean 
  a graded pair, where $R$ is a standard graded ring (that is, $R$ is generated 
by $R_1$ as a $k$-algebra) in addition.

\vspace{10pt}

\noindent{\bf Theorem 1.1 [T]}.\quad {\em Let $(R, I)$ be a standard graded 
pair. Then for a 
finitely generated graded module $M$ over $R$  there is a sequence
$\{g_n(M_R,I):[0, \infty)\longto [0, \infty) \}_n$   
of compactly supported continuous and piecewise linear  functions 
 such that 

\begin{enumerate}
\item the sequence $\{g_n(M_R,I)\}_n$ is uniformly convergent.
Moreover
\item the HK density function 
 $f_{M_R, I}:[0, \infty]\longto [0, \infty)$ defined as 
$f_{M_R,I}(x) = \lim_{n\to \infty}g_n(M_R,I)(x)$ 
is a compactly supported continuous function, and 
$$e_{HK}(M,I) = \int_0^{\infty} f_{M_R,I}(x)dx.$$
\end{enumerate}}

Here, for a  finitely generated graded $R$-module $M$,  
$\{g_n(M_R,I):[0, \infty)\longto [0, \infty) \}_n$ denotes  the  sequence of 
functions 
 given as follows: 

For $x \geq 0 $, if
$x = (1-t)\frac{\lfloor xq \rfloor}{q} + (t)\frac{\lfloor xq +1 \rfloor}{q}$, 
for some $t\in [0, 1)$
then we define 
 $$g_n(M_R,I)(x) = \frac{1}{q^{d-1}}
\left((1-t) \ell(M/I^{[q]}M)_{\lfloor xq\rfloor} + 
(t)\ell(M/I^{[q]}M)_{\lfloor xq+1\rfloor}\right).$$

\vspace{10pt}

In this paper we generalize the above result to the case of {\em graded pair} 
$(R, I)$, where $R$ need not be standard graded.
(There are many interesting $\N$-graded rings which are not 
standard graded, for examples the ring of invariants and
the positive affine semigroup 
rings, in particular affine toric rings).

To do this  we need to
 generalize the notion of $g_n(M_R,I)$ (see Definition~\ref{d1})
 which 
 coincides with the above notion of $g_n(M_R, I)$ 
whenever  $\mbox{gcd}~\{n\mid R_n\neq 0\} =1$. 

\vspace{10pt}

More precisely we prove the following 

\vspace{10pt}

\begin{thm}\label{t1}{\em (Main Theorem)}.\quad If $M$ is  a finitely 
generated graded $R$-module, where 
$(R, I)$ is a graded pair then there is a 
  sequence 
$\{g_n(M_R,I):[0, \infty)\longto [0, \infty)\}_n$ of compactly supported 
continuous and piecewise linear  functions such that

\begin{enumerate}
\item $\{g_n(M_R,I)\}_{n\in \N}$ is a uniformly convergent sequence
of compactly supported  functions.
\item If $f_{M_R, I}:[0, \infty)\longto [0, \infty)$ given by  
$x \to  \lim_{n\to \infty} g_n(M_R,I)(x)$ then $f_{M_R,I}$ is a compactly
supported continuous function such that
$$(a)\quad f_{M_R,I} = (\rank~M)~f_{R,I}\quad\mbox{and}\quad (b)\quad 
e_{HK}(M, I) = \int_0^\infty f_{M_R,I}(x)dx. $$
\end{enumerate}
\end{thm}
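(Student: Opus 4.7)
The plan is to reduce the graded (possibly non-standard graded) case to the standard graded case proved in Theorem~1.1 of [T], via a Veronese subring construction; properties $(a)$ and $(b)$ will then follow by standard arguments.

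\textbf{Veronese reduction.} After replacing $R$ by its regrading by $g := \gcd\{n \mid R_n \neq 0\}$, we may assume $g = 1$ (the regrading induces an affine rescaling of the argument of the density function that is compatible with Definition~\ref{d1}). Since $R$ is finitely generated over $k$, a sufficiently divisible $m \geq 1$ with $\gcd(m, p) = 1$ can be chosen so that the $m$-th Veronese subring $S := R^{(m)}$, graded by $S_j := R_{mj}$, is standard graded. Then $(S, J)$ with $J := I \cap S$ is a standard graded pair, and $R$ is a finite graded $S$-module, decomposing as $R = \bigoplus_{i=0}^{m-1} R^{(m,i)}$ where $R^{(m,i)} := \bigoplus_j R_{mj+i}$; the $R$-module $M$ decomposes analogously into finitely generated graded $S$-modules $M^{(m,i)}$.

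\textbf{Transfer of density functions.} For each residue $i \in \{0,\ldots,m-1\}$ and each $q = p^n$, we express $\dim_k (M/I^{[q]}M)_{mj+i}$ in terms of $S$-module colengths. Because $\gcd(m, p) = 1$, the Frobenius power $I^{[q]}$ is compatible with the $m$-periodic decomposition up to a controlled error of order $o(q^{d-1})$ in each graded piece. Applying Theorem~1.1 of [T] to $(S, J)$ and to each graded $S$-module $M^{(m,i)}$ produces uniformly convergent sequences with compactly supported continuous limits $f_{M^{(m,i)}_S, J}$. The generalized sequence $g_n(M_R, I)$ from Definition~\ref{d1} is then, up to a uniformly vanishing error, an affine recombination of the $g_n(M^{(m,i)}_S, J)$ after appropriate degree shifts, establishing uniform convergence to a compactly supported continuous $f_{M_R, I}$.

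\textbf{Properties $(a)$ and $(b)$.} Property $(a)$ is proved by additivity: once existence is established, $M \mapsto f_{M_R, I}$ is additive on short exact sequences of graded $R$-modules, so a prime filtration of $M$ reduces the claim to the case $M = R/P$ for graded primes $P \subset R$. If $P \neq 0$ then $\dim R/P < d$ and a dimension count gives $f_{R/P, I} \equiv 0$; the case $P = 0$ has $\rank(R/0) = 1 = \rank(R)$, matching the formula. For $(b)$, uniform convergence and compact support allow the interchange of limit and integral, and the Riemann-sum-type identity $\int_0^{\infty} g_n(M_R, I)(x)\,dx = \ell(M/I^{[q]}M)/q^d + O(q^{-1})$ inherent in Definition~\ref{d1} yields $\int f_{M_R, I} = e_{HK}(M, I)$ in the limit.

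The hardest step is the correspondence in the second paragraph: rigorously bounding the interaction of Frobenius powers $I^{[q]}$ with the $m$-periodic $S$-module decomposition of $R$ so that the error in each graded piece is genuinely $o(q^{d-1})$, uniformly across all residues $i$. This requires exploiting both that $I$ is generated by finitely many homogeneous elements and that $\gcd(m, p) = 1$, so that $q$-th powers of generators lie in predictable residue classes modulo $m$.
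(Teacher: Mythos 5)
Your reduction breaks down at its central step, the ``transfer of density functions,'' and the gap is not a technicality but precisely the difficulty this paper is designed to overcome. The Frobenius power $I^{[q]}$ is generated by $f_t^{q}$ with $f_t\in R$ homogeneous of degrees $d_t$ that need not be divisible by $m$; hence the degree-$i$ residue piece $(I^{[q]}M)\cap M^{(m,i)}$ is a sum of images of $S$-linear multiplication maps by elements of $R\setminus S$, and it is \emph{not} of the form $J^{[q]}N$ for any fixed ideal $J\subseteq S$ and fixed graded $S$-module $N$. So Theorem~1.1 of [T] applied to $(S,J)$ with $J=I\cap S$ says nothing about these pieces, and the claimed $o(q^{d-1})$ comparison is false in general. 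A concrete test case: $R=k[x,y]$ with $\deg x=1$, $\deg y=2$, $I=(x,y)$, $S=R^{(2)}$, $J=I\cap S=(x^2,xy,y)S$. The even part of $R/I^{[q]}$ is spanned by monomials $x^ay^b$ with $a<q$, $b<q$, $a$ even, while $S/J^{[q]}$ contains in addition all $x^ay^b$ with $q\le a<2q$, $b<q$, $a$ even (e.g.\ $x^{2q-2}y^{q-1}$); in the relevant degrees the discrepancy is of order $q=q^{d-1}$, not $o(q^{d-1})$. Choosing $m$ coprime to $p$ does not help (and in fact such a standard-graded Veronese need not exist: for $R=k[x,y]$ with $\deg x=p$, $\deg y=p+1$, every $m$ with $R^{(m)}$ standard graded is divisible by $p$). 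The $q$-dependence of the maps ``multiplication by $f_t^{q}$'' between different residue pieces is exactly what forces the paper, in the introduction and Section~5, to observe that $\ker\varphi_{m,q}$ is not a twist or Frobenius pullback of a single sheaf, and to replace the appeal to [T] by new estimates (the Main Lemma~\ref{ML}) comparing $h^0(X,\sF_{mp+n_1,qp})$ with $p^{d-1}h^0(X,\sF_{m,q})$ for the reflexive sheaves $\sO_X(nD)$ furnished by Demazure's theorem.

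For contrast, the paper's route is: prove uniform convergence directly for $(R,I)$ with $R$ normal and $\gcd\{n\mid R_n\neq 0\}=1$ (Proposition~\ref{p1}), using the exact sequences (\ref{e10}), (\ref{e20}) and the Main Lemma; then handle a general graded domain by regrading to $R^{(n_0)}$ and passing to the normalization via the comparison Lemma~\ref{l12} (generically isomorphic degree-zero maps with cokernel of lower support dimension); then handle a module $M$ by producing a generically isomorphic map $\oplus^{n_1}S(-a)\longto \overline{M}$ and applying Lemma~\ref{l12} again, which simultaneously yields part (2)(a) without any prime-filtration additivity argument. Your sketch of (a) via filtrations and of (b) via interchanging limit and integral would be repairable if existence were in hand, but the existence step as you propose it does not go through: some substitute for the Main Lemma (or an equally serious new input controlling cokernels of the maps $\sum_t f_t^{q}(-)$ degreewise) is indispensable and is missing from your argument.
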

\vspace{10pt}

We recall some key aspects of  the proof in the  situation of standard graded 
pair.

 If $R$ is a standard graded domain (which need not be normal) as in 
[T] with $I$ generated by homogeneous generators $f_1, \ldots, f_s$ 
of degrees $d_1, \ldots, d_s$
 then there exists a very ample invertible sheaf $\sO_X(D)$ on $X$ 
(associated  to a Cartier divisor $D$)  
 such that there is a graded inclusion 
$R \longto \oplus_{m\geq 0}H^0(X, \sO_X(mD))$ which is an isomorphism in all 
graded degrees  $m >> 0$. 
This gives us a short exact sequence of $\sO_X$-modules
\begin{equation}\label{de1} 
0\longto V\longto \oplus_i\sO_X((1-d_i)D) \longby{\phi} \sO_X(D)\longto 0,
\end{equation} 
$\phi(\sum_ia_i) = \sum_ia_if_i$.
Since $\sO_X(D)$ (in fact every $\sO_X(mD)$) is invertible the 
sequence (\ref{de1})
is locally split exact and hence taking its Frobenius pull backs 
($F:X\longto X$ is the Frobenius map induced 
by the map $\sO_X\longto \sO_X$ given by $x\to x^p$) 
gives the exact sequence  (here $q = p^n$)
\begin{equation}\label{de2}
0\longto  F^{n*}V\longto 
\oplus_i\sO_X((q-qd_i)D) \longby{\phi_{0,q}}
\sO_X(qD)\longto 0.\end{equation}
 
Since $\sO_X(mD)\tensor \sO_X(nD) \simeq \sO_X((m+n)D)$ 
tensoring (\ref{de2}) by $\sO_X(mD)$ we get the exact sequence 

\begin{equation}\label{de3}
0\longto  F^{n*}V\tensor\sO_X(mD)\longto 
\oplus_i\sO_X((m+q-qd_i)D) \longby{\phi_{m,q}}
\sO_X((m+q)D)\longto 0.\end{equation}
 
Now, for every $x\geq 0$, $\lfloor xq\rfloor = m+q$, for some integer $m$. Hence
we may define step functions 
\begin{equation}\label{de4}f_n(R,I)(x) :=  f_n(R,I)(\frac{m+q}{q})  =  
\frac{1}{q^{d-1}}\ell(R/I^{[q]})_{m+q} \end{equation}
$$=  
\frac{1}{q^{d-1}}\left[h^0(X, \sO_X(m+q)D)-
\oplus_ih^0(X, \sO_X(m+q-qd_i)D) + h^0(X, F^{n*}V\tensor\sO_X(mD))\right].$$
The sequence $g_n(R,I)$ is obtained from $f_n(R, I)$ in an obvious way.

In particular the computations depend on the cohomologies of the 
Frobenius pullbacks of the locally free sheaves $V$ and $\sO_X(D)$ and their 
twists (by the  line bundles $\sO_X(mD)$).

On the other hand if $R= \oplus_{m\geq 0} R_m$ is an arbitrary normal graded domain 
then by the 
theorem of Demazure (see Theorem~\ref{D} below), there is a $\Q$-divisor $D$  
such that $R_m = H^0(X, \sO_X(mD))$, for all $m$. But $\sO_X(D)$ 
need not be  invertible and the multiplication map  $\sO_X(mD)\tensor
\sO_X(nD) \longto \sO_X((m+n)D)$ need not be an isomorphism, in general. 
In particular the sequence  
(\ref{de1}) need not be locally split exact and $V$ may not be locally free. 
Hence a version of (\ref{de3}) 
cannot be derived from a single sequence like (\ref{de2}), 
and therefore ${\mbox{Ker}}~\phi_{m,q}$ does not come from  `twists of' a 
single sheaf
(unlike in the standard graded situation, where $ \mbox{Ker}~\phi_{m,q}= 
F^{n*}V\tensor \sO_X(mD)$, for all $m$ and $q$). 

However $\sO_X(mD)$, associated to such a $\Q$-divisor, does have some 
special properties which we exploit,  for example $\sO_X(mD)$ is 
 a reflexive sheaf of $\sO_X$-modules, hence invertible outside the 
singular locus of $X$.
As a result though one does not have a direct relation between 
the sequences (\ref{de3}) (as $m$ and $q$ vary), we are able to relate their
cohomologies by estimates 
$$ |h^0(X, \mbox{Ker}~\phi_{mp+n_1,qp})- p^{d-1}h^0(X,  \mbox{Ker}~\phi_{m,q})|
= O(m+q)^{d-2},\quad\mbox{for}\quad 0\leq n_1<p.$$

In particular the fact (Theorem~\ref{D}) 
that each $R_m$ is the space of 
sections  of the divisor  $mD$ allows us to give a simpler proof 
(than in [T]) for this more general setting (a graded pair).

However in [T] we prove the existence of the HK density function $f_{M_R,I}$ 
directly (and without the assumption that $R$ is a domain).
 Here we prove the Main Theorem when   $R$ is a domain, and the proof is 
in three steps: 
We prove the theorem when
$(M_R, I) = (R, I)$ and  where $R$
is a normal domain such that $\mbox{gcd}~\{m>0\mid R_m\neq 0\} = 1$. This 
is the main part. 
Then  we extend the result for the pair $(R,I)$, where 
 $R$ is a general graded domain. Then we further extend this to 
graded modules over such pairs.

We can extend the result to the case, when $R$ may not be
a domain, by 
defining 
$$f_{R, I} := \sum_{p\in \wedge}\lambda(M_P)f_{R/P, (I+P)/P},$$
where $\Lambda = \{p\in \Spec R\mid \dim R = \dim R/P\}$.
This is clearly an additive function and hence can be extended  
canonically to the notion of $f_{M_R, I}$.
In particular $\int f_{M_R, I}(x)dx = e_{HK}(M_R, I)$ as 
$e_{HK}(-)$ is an additive function. 

However, if $\mbox{gcd}\,\{m\mid R_m\neq 0\} = n_0$, say, 
the equality $$f_{R,I}(x) = \lim_{n\to \infty} {1/q^{d-1}}
\ell(R/I^{[q]})_{{\lfloor xq \rfloor}n_0}$$
may not hold any longer unless 
$\mbox{gcd}\,\{m\mid (R/P)_m\neq 0\} = n_0$, 
 for all $ P \in \Lambda$.

\vspace{10pt}

As a consequence of our Main Theorem (Theorem~\ref{t1}) we get the following

\begin{cor}\label{c3}Let  $(S, I)$ be  a graded pair of dimension $d>1$.
Suppose  there is  a graded ring $R$ with  a degree preserving 
map $S\subset R$ such that $R$ is $S$-finite, and 
 $\mbox{proj dim}_R (R/IR) < \infty$.

 Then  the HK density function
$f_{S, I}$  is a piecewise polynomial function  
of degree $d-1$, explicitly  given in terms of the graded Betti numbers of the 
resolution of $IR$.

In particular, if $R = k[X_1, \ldots, X_d]$ is a polynomial ring and 
 $G$ is a finite group (scheme) acting linearly on $R$ 
then 
for any graded pair $(R^G, I)$, where 
$R^G$ is  the ring of invariants, the function 
$f_{R^G,I}$ is a piecewise polynomial of degree $d-1$.
\end{cor}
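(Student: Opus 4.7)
The plan is to apply Theorem~\ref{t1}(2)(a) with $M = R$ viewed as a finitely generated graded $S$-module, thereby expressing $f_{S,I}$ as a scalar multiple of the density function assembled from the graded dimensions $\dim_k(R/I^{[q]}R)_m$; then to use the finite graded free resolution of $R/IR$ over $R$, together with exactness of the Frobenius functor on modules of finite projective dimension, to obtain an explicit closed formula in terms of the graded Betti numbers of $IR$.

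Since $(S,I)$ is a graded pair and $R$ is a finitely generated graded $S$-module of rank $r := \rank_S R$, Theorem~\ref{t1}(2)(a) gives $f_{R_S, I} = r\, f_{S,I}$. Because $I^{[q]}R = (IR)^{[q]}$ inside $R$, the sequence $g_n(R_S, I)$ is determined by the graded $k$-dimensions $\dim_k(R/I^{[q]}R)_m$, so it suffices to work out their asymptotic behavior.

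Choose a minimal graded free $R$-resolution
\[
0 \longto F_\ell \longto \cdots \longto F_1 \longto F_0 = R \longto R/IR \longto 0, \qquad F_i = \bigoplus_j R(-a_{i,j})^{\beta_{i,j}},
\]
which is finite by the hypothesis $\mbox{proj dim}_R(R/IR) < \infty$. By Peskine--Szpiro, the Frobenius functor $F^{n*}$ is exact on modules of finite projective dimension, so (with $q = p^n$) it carries this into a graded free resolution of $R/I^{[q]}R$ whose $i$-th term is $\bigoplus_j R(-q a_{i,j})^{\beta_{i,j}}$. Taking Euler characteristics graded piece by graded piece,
\[
\dim_k (R/I^{[q]}R)_m \;=\; \sum_i (-1)^i \sum_j \beta_{i,j}\, \dim_k R_{\,m - q a_{i,j}}.
\]

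Specializing to $R = k[X_1,\ldots,X_d]$ (so $r = |G|$ and $\dim_k R_m = \binom{m+d-1}{d-1}$ for $m \ge 0$), set $m = \lfloor xq \rfloor$ and divide by $q^{d-1}$: each summand converges, uniformly on compact intervals in $x$, to $(x - a_{i,j})_+^{d-1}/(d-1)!$, giving
\[
f_{R^G, I}(x) \;=\; \frac{1}{|G|\,(d-1)!}\, \sum_i (-1)^i \sum_j \beta_{i,j}\, (x - a_{i,j})_+^{d-1},
\]
a piecewise polynomial function of degree $d-1$ with breakpoints among the $a_{i,j}$. For the general setting of the Corollary the same derivation applies with $\binom{m+d-1}{d-1}$ replaced by the leading Hilbert polynomial of $R$ (of degree $d-1$), again producing a piecewise polynomial of degree $d-1$ in the Betti data. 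The most delicate step is invoking Frobenius exactness on the resolution of $R/IR$ (standard once $\mbox{proj dim}_R(R/IR) < \infty$) and controlling uniform convergence at the corner points $x = a_{i,j}$; both reduce to routine binomial asymptotics in the framework of Theorem~\ref{t1}.
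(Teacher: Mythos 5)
Your overall route coincides with the paper's: Theorem~\ref{t1}(2)(a) to pass to $R$ as a finite graded $S$-module, then Peskine--Szpiro exactness of Frobenius on the finite graded resolution of $R/IR$ and a degreewise Euler characteristic (this is exactly Proposition~\ref{p2}). The genuine gap is in the step where you identify $f_{R_S,I}$ with the limit of $\frac{1}{q^{d-1}}\dim_k(R/I^{[q]}R)_{\lfloor xq\rfloor}$. By Definition~\ref{d1}, $f_n(R_S,I)$ is normalized by the grading of the \emph{base} pair $(S,I)$: with $m_0=\gcd\{n\mid S_n\neq 0\}$ it equals $\frac{1}{q^{d-1}}\sum_{j=0}^{m_0-1}\ell(R/I^{[q]}R)_{\lfloor xq\rfloor m_0+j}$, not $\frac{1}{q^{d-1}}\ell(R/I^{[q]}R)_{\lfloor xq\rfloor}$. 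The latter limit is $f_{R,IR}(x)$ (the density function of $R$ with its own grading, where $n_0=\gcd\{n\mid R_n\neq 0\}=1$ for the polynomial ring), and the two differ by the rescaling of Lemma~\ref{l13}: $f_{R_S,I}(x)=l_0\,f_{R,IR}(l_0x)$ with $l_0=m_0/n_0$. Setting ``$m=\lfloor xq\rfloor$'' silently assumes $m_0=n_0$, which typically fails for rings of invariants: for $G=D_n,E_6,E_7,E_8\subset SL_2(k)$ (and $A_n$ with $n$ even) the generators of $R^G$ have degrees with $\gcd =2$, so $m_0=2$ while $n_0=1$.

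Consequently your closed formula $f_{R^G,I}(x)=\frac{1}{|G|(d-1)!}\sum_i(-1)^i\sum_j\beta_{i,j}(x-a_{i,j})_+^{d-1}$ is wrong in general; the correct statement is $f_{R^G,I}(x)=\frac{l_0}{|G|(d-1)!}\sum_{i,j}(-1)^i\beta_{i,j}(l_0x-a_{i,j})_+^{d-1}$. Compare Example~\ref{e8}: the resolution of $IR$ has top twist $31$, yet $f_{S,I}$ is supported on $[0,31/2]$ and equals $x/30$ (not $x/120$) near $0$. The qualitative conclusion of the Corollary survives, since rescaling preserves ``piecewise polynomial of degree $d-1$ in the Betti data,'' but the explicit formula does not, and the missing ingredient is precisely Lemma~\ref{l13}. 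The same normalization issue resurfaces in your last sentence about general $R$: for a non-standard-graded $R$ the Hilbert function is only a quasi-polynomial (with many zero degrees), so one must sum over the $m_0$-block of Definition~\ref{d1} before taking limits rather than substitute a single ``leading Hilbert polynomial.'' Once you insert Lemma~\ref{l13} (or redo the limit with the block sum), your argument becomes the paper's proof: Theorem~\ref{t1} plus Lemma~\ref{l13} plus Proposition~\ref{p2}.
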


We explicitly write down (in the tame case) the HK density function $f_{R^G, I}$, 
 where  $R= k[X_1, X_2]$ and $G\subset SL_2(k)$ a finite group and 
 $I$ is the graded 
maximal ideal of $R^G$.

\vspace{5pt} 

Similar to the case of standard graded pairs, the HK density function is 
multiplicative (Theorem~\ref{t2}) for the graded pairs too.  In particular 
the HK  density function of 
 the Segre product of two graded pairs can be written in terms of the 
HK density functions of those  pairs.

\section{preliminaries}
\begin{notations} By a {\em graded pair} $(R,I)$ we mean that
$R = \oplus_{m\geq 0}R_m$ is a Noetherian graded domain  of dimension 
$d\geq 2$, and of finite type over a  perfect field $k = R_0$  of 
characteristic $p>0$, and  
$I\subset R$ is a graded ideal such that $\ell(R/I)<\infty $.
\end{notations}

Let $(R,I)$ be a graded pair, and let $M$ be a finitely generated graded
 $R$-module. 
We extend the definition of $g_n(M_R,I)$ 
(given in the introduction)  as follows.  

\begin{defn}\label{d1} Let $n_0 = \mbox{gcd}~\{n\mid R_n\neq 0\}$. Then 
$f_n(M_R,I):[0, \infty)\longto [0, \infty)$
is the step function given by 
$$f_n(M_R,I)(x) = \frac{1}{q^{d-1}}\left(\ell(M/I^{[q]}M)_{\lfloor xq\rfloor n_0}+
\cdots +\ell(M/I^{[q]}M)_{\lfloor xq\rfloor n_0+n_0-1}\right).$$

If $x = (1-t)\frac{\lfloor xq \rfloor}{q} + (t)\frac{\lfloor xq +1 \rfloor}{q}$, 
for some $t\in [0, 1)$ then the function $g_n(M_R, I):[0. \infty)\longto [0, \infty)$ 
is given  by 
$$g_n(M_R,I)(x) = (1-t)f_n(M_R,I)(x) + (t)f_n(M_R, I)(x+\frac{1}{q}).$$

\end{defn}

In particular, each $g_n(R, I)$ is continuous, and the uniform convergence 
of the sequence $\{g_n(M_R,I)\}_n$ is equivalent 
to the uniform convergence of the sequence $\{f_n(M_R,I)\}_n$.

We also make the following observation that  the functions 
$g_n(M_R,I)$ and $f_n(M_R,I)$
are compactly supported with a bound on the support which is independent of $n$.

\begin{lemma}\label{l1}Each $g_n(M_R,I)$  is a compactly supported 
continuous function. 
Moreover, for a given pair $(M_R,I)$ there is a constant ${\tilde m}$ (independent of 
$n$) 
such that 
$$ \supp~g_n(M_R,I) \subseteq [0, {\tilde m}],\quad\mbox{for all}\quad n\geq 1.$$
In particular $\supp~f_n(M_R,I) \subseteq [0, {\tilde m}]$, for all $n \geq 1$.
\end{lemma}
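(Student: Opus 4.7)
\medskip

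\noindent\textbf{Proof plan for Lemma~\ref{l1}.}
The continuity and piecewise-linearity of $g_n(M_R,I)$ are built into the definition: since $f_n(M_R,I)$ is constant on each interval $[k/q,(k+1)/q)$, the formula $g_n(x)=(1-t)f_n(x)+t\,f_n(x+1/q)$ with $t=xq-\lfloor xq\rfloor$ agrees with the piecewise-linear interpolation of the values $\{f_n(k/q)\}_{k\in\mathbb{N}}$, and in particular $g_n(k/q)=f_n(k/q)$. So the only real content is the uniform bound on the support, and this reduces to bounding the top nonzero degree of $M/I^{[q]}M$ linearly in $q$ (with constants independent of $n$).

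The plan is to combine finite generation of $R$ as a $k$-algebra with the $\mathfrak{m}$-primarity of $I$ (consequence of $\ell(R/I)<\infty$) via a pigeonhole argument. Write $R=k[z_1,\ldots,z_N]$ with $z_i$ homogeneous of degree $\delta_i$, and set $\Delta=\max_i\delta_i$. Since $\ell(R/I)<\infty$ and $R_0=k$, there is an integer $a$ with $\mathfrak{m}^a\subset I$, where $\mathfrak{m}=R_+$; in particular $z_i^a\in I$ and hence $z_i^{aq}=(z_i^a)^q\in I^{[q]}$ for every $i$ and every $q=p^n$.

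Next I would apply pigeonhole to monomials. Any monomial $z_1^{\alpha_1}\cdots z_N^{\alpha_N}$ of degree $j=\sum_i\alpha_i\delta_i\ge N\Delta aq$ has $\sum_i\alpha_i\ge j/\Delta\ge Naq$, so some $\alpha_i\ge aq$ and the monomial is divisible by $z_i^{aq}\in I^{[q]}$. Since $R_j$ is spanned by such monomials, $R_j\subseteq I^{[q]}$, i.e.\ $(R/I^{[q]})_j=0$ for $j\ge N\Delta aq$. Choosing homogeneous generators $m_1,\ldots,m_r$ of $M$ with $E=\max_i\deg m_i$, we get a surjection $\bigoplus_i R(-e_i)\twoheadrightarrow M$, hence a surjection $\bigoplus_i(R/I^{[q]})(-e_i)\twoheadrightarrow M/I^{[q]}M$, and therefore
\[
\bigl(M/I^{[q]}M\bigr)_j=0\quad\text{for all}\quad j\ge N\Delta aq+E.
\]

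Finally I would translate this into a support bound. The function $f_n(M_R,I)$ is nonzero at $x$ only if some $(M/I^{[q]}M)_{\lfloor xq\rfloor n_0+k}$ ($0\le k<n_0$) is nonzero, forcing $\lfloor xq\rfloor n_0<N\Delta aq+E$, hence
\[
x<\frac{N\Delta a}{n_0}+\frac{E+n_0}{n_0 q}\le \frac{N\Delta a+E+n_0}{n_0}=:\tilde m_0
\]
for $q\ge 1$. Taking $\tilde m:=\tilde m_0+1$ absorbs the extra $1/q$ coming from the values of $f_n$ at $x+1/q$ used in the definition of $g_n$, and gives $\supp f_n(M_R,I)\subseteq[0,\tilde m]$ and $\supp g_n(M_R,I)\subseteq[0,\tilde m]$ for every $n\ge 1$. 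The main (minor) obstacle is just the bookkeeping in translating from the bound on $(R/I^{[q]})_j$ to the corresponding bound for $(M/I^{[q]}M)_j$ and then to the variable $x$ through the scaling by $n_0$ and the floor function; no new ideas are required beyond finite generation of $R$ and $M$ plus the pigeonhole estimate above.
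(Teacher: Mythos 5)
Your proposal is correct and follows essentially the same route as the paper: both arguments come down to showing $(M/I^{[q]}M)_j=0$ once $j\geq Cq+C'$ with constants independent of $n$, by an elementary degree/pigeonhole count, and then translating this into a support bound for $f_n$ and $g_n$. The only cosmetic difference is where the pigeonhole is applied — the paper factors $M_m\subseteq J^{lsq}M\subseteq I^{sq}M\subseteq I^{[q]}M$ using the $s$ generators of $I$ and $J^{l}\subseteq I$, while you use $z_i^{aq}\in I^{[q]}$ for the algebra generators of $R$ and count exponents of monomials; this is the same idea with slightly different bookkeeping.
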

\begin{proof} We choose the integers $s$, $l$, $m_{\mu}$ and $n_{\nu}$  as follows:
 Let $\mu(I) = s$. Let $J = \oplus_{m>0}R_m$ with a set of 
homogeneous generators  
$h_1, \ldots, h_{\mu}$  of degrees, say, 
$m_1\leq \cdots \leq m_{\mu}$ respectively. 
Let $l$ be an integer such that $J^l \subseteq I$.
Let $M$ be generated by 
homogeneous elements $g_1, \ldots, g_{\nu}$  of degrees 
$n_1\leq \cdots \leq n_{\nu}$.

Since $R_m = h_1R_{m-m_1}+\cdots + h_{\mu}R_{m-m_{\mu}}$ and 
$M_m = g_1R_{m-n_1}+\cdots + g_{\nu}R_{m-n_{\mu}}$,

$$m-n_\nu \geq (m_{\mu})lsq \implies  M_m \subseteq 
J^{ls q}M\subseteq I^{sq}M \subseteq I^{[q]}M.$$
Hence  $(M/I^{[q]}M)_{m} = 0$, for all $m\geq n_s+ (m_{\mu})lsq$.
\end{proof}

The following is a well known result.
\begin{lemma}\label{r2} Let $R = \oplus_{n\geq 0} R_n$ be a Noetherian graded 
domain such that $R_0$ is a field.
Then the following
three conditions are equivalent:
\begin{enumerate}
\item[(1)] $\mbox{gcd}~\{n >0\mid R_n\neq 0\} = n_0$. 
\item[(2)] $n_0 >0$ is the least integer with the property:
there is $m_1>0$ such that $R_{mn_0}\neq 
0$, for all $m\geq m_1$.
\item[(3)]  $n_0 >0$ is the least integer such that the quotient field of $R$
has an homogeneous element of degree $n_0$.
\end{enumerate}
\end{lemma}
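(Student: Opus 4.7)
The plan is to establish the cycle of implications $(1) \Rightarrow (2) \Rightarrow (3) \Rightarrow (1)$, resting on two structural remarks. First, because $R$ is a domain, the product of nonzero homogeneous elements of degrees $a, b > 0$ is again nonzero of degree $a + b$, so $S := \{n > 0 \mid R_n \neq 0\}$ is a numerical subsemigroup of $\N$. Second, the set of degrees of nonzero homogeneous elements of the quotient field $K(R)$ is a subgroup $H$ of $\Z$, and hence is either $0$ or of the form $n^* \Z$ for a unique $n^* > 0$.

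For $(1) \Rightarrow (2)$, I will invoke the standard fact that any submonoid of $\N$ with gcd equal to $n_0$ contains every sufficiently large multiple of $n_0$; this produces the required $m_1$ with $R_{m n_0} \neq 0$ for all $m \geq m_1$. Minimality of $n_0$ in (2) is then almost formal: if $0 < n' < n_0$ enjoyed the same property, then $n_0 = \gcd S$ would divide both $m_1' n'$ and $(m_1' + 1) n'$, hence divide their difference $n'$, contradicting $n' < n_0$.

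For $(2) \Rightarrow (3)$, the existence of a degree-$n_0$ homogeneous element of $K(R)$ is immediate: pick nonzero $x \in R_{(m+1) n_0}$ and $y \in R_{m n_0}$ for some $m \geq m_1$ and take $x/y$. The nontrivial part, which I expect to be the main obstacle, is minimality: given a hypothetical homogeneous $\xi = a/b \in K(R)$ of positive degree $n' < n_0$ with $a \in R_\alpha, b \in R_\beta$ nonzero, I will form the products $\{a^j b^{n' - j}\}_{j = 0}^{n'}$ to obtain nonzero elements of $R$ in the $n' + 1$ degrees $n'(\beta + j)$, $j = 0, \ldots, n'$. Thus the submonoid $T := \{m \geq 0 \mid R_{m n'} \neq 0\}$ of $\N$ contains $n' + 1$ consecutive integers, in particular two coprime ones, and hence (by Sylvester--Frobenius) every sufficiently large integer; this shows $R_{m n'} \neq 0$ for all large $m$, violating the minimality of $n_0$ in (2).

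Finally, $(3) \Rightarrow (1)$ is a short group-theoretic remark: since $S \subseteq H = n_0 \Z$ one has $n_0 \mid \gcd S$; conversely every element of $H$ is a difference of elements of $S \cup \{0\}$, so $\gcd S$ divides every element of $H$ and in particular $n_0$. The two divisibilities together force $\gcd S = n_0$, as desired.
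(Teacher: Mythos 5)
Your proof is correct. Note that the paper itself offers no argument here (the proof is ``left as an exercise''), so there is nothing to match step by step; what the authors evidently have in mind (and sketch in unpublished notes) is the direct Bezout-type construction: after reducing to $n_0=1$, pick homogeneous elements whose degrees combine to give $\bar r-\bar s=1$, and use the monomials $\bar x=\prod x_i^{l_i}$, $\bar y=\prod y_j^{t_j}$ both to produce nonzero elements in all large degrees (giving (2)) and to exhibit $\bar x/\bar y$ as a degree-one homogeneous fraction (giving (3)), with the converse directions being immediate. Your route is organized differently but equally valid: you run the cycle $(1)\Rightarrow(2)\Rightarrow(3)\Rightarrow(1)$, quoting the standard numerical-semigroup fact that a submonoid of $\mathbb{N}$ with gcd $n_0$ contains all large multiples of $n_0$, using the powers $a^jb^{n'-j}$ plus Sylvester--Frobenius to kill a hypothetical small positive degree in $(2)\Rightarrow(3)$, and closing the loop with the observation that the degrees of nonzero homogeneous elements of the graded quotient field form a subgroup $H\subseteq\mathbb{Z}$, squeezed between $\gcd S\mid n_0$ and $n_0\mid\gcd S$. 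The authors' construction is more self-contained (it proves the semigroup fact it needs explicitly), while yours isolates the group $H$, which makes the implication $(3)\Rightarrow(1)$ essentially formal; the only points worth tidying are the degenerate possibilities ($S=\emptyset$, or $\beta=0$ in your coprimality step, where the ``two coprime consecutive integers'' are $0$ and $1$ --- harmless, since $1\in T$ already forces $T\supseteq\mathbb{N}$), and a remark that ``homogeneous element of $Q(R)$'' means a quotient $a/b$ of homogeneous elements of $R$, which is the convention the paper uses elsewhere.
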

\begin{proof}Left as an exercise for the reader.
\end{proof}

\section{The HK density functions for normal graded domains}

In this section we prove  the existence of the HK density 
function (in Proposition~\ref{p1}) for a graded pair $(R, I)$, where, 
in addition,  
 $R$ is a normal domain. We will make use of a technical lemma (Lemma~\ref{ML}), 
which we will prove  in Section~5.

For such a ring $R$ we will be  use the following  result of Demazure~[D].

\begin{thm}\label{D}{\em (Demazure)}.\quad Let $R = \oplus_{n\geq 0}R_n$ 
be a normal graded domain of 
finite type over a field $k$. Suppose there is an  homogeneous element $T$ of 
degree $1$ in the quotient field of $R$. Then for $X = \mbox{Proj}~R$, 
there exists a unique Weil $\Q$-divisor $D$ in $W\mbox{div}(X, \Q)$ 
such that $R_n = H^0(X, \sO_X(nD)).T^n$, for every $n\geq 0$.
\end{thm}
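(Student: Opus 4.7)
The plan is to follow Demazure's classical construction: assign each codimension-one point of $X$ a rational coefficient read off from the valuation of $T$, then use normality of $R$ to convert membership in $R_n$ into a divisor-theoretic condition. Since $T$ is homogeneous of degree one in $\mathrm{Frac}(R)$, the degree-zero subfield $K_0 \subset \mathrm{Frac}(R)$ is the function field $k(X)$ of $X = \mathrm{Proj}(R)$; $T$ is transcendental over $K_0$ and $\mathrm{Frac}(R) = K_0(T)$. Every homogeneous $f \in R_n$ writes uniquely as $f = y\,T^n$ with $y = f/T^n \in K_0$, reducing the task to describing $\{y \in K_0 : yT^n \in R\} \subset K_0$ as a space of global sections of a reflexive sheaf on $X$.

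For each codimension-one point $P \in X$, let $\mathfrak{p}_P \subset R$ be the associated graded height-one prime. By normality, $R_{\mathfrak{p}_P}$ is a DVR of $K_0(T)$; let $w_P$ denote its valuation. Its restriction to $K_0$ equals $e_P \cdot v_P$, where $v_P$ is the standard valuation on $K_0$ at $P$ and $e_P \in \N$ is the ramification index. Set
\[
a_P \;:=\; \frac{w_P(T)}{e_P} \;\in\; \Q.
\]
Writing $T = a/b$ with $a,b \in R$ nonzero homogeneous, one has $w_P(T) = w_P(a) - w_P(b)$, which is nonzero at only finitely many $P$ (those where $a$ or $b$ vanishes on $X$). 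So $a_P = 0$ for almost all $P$, and $D := \sum_P a_P \cdot P$ is a well-defined element of $W\mathrm{div}(X, \Q)$.

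For $x = y\,T^n$ homogeneous of degree $n$, direct computation gives $w_P(x) = e_P\bigl(v_P(y) + n\,a_P\bigr)$, so $x \in R_{\mathfrak{p}_P}$ iff $v_P(y) + n\,a_P \geq 0$, which is exactly the condition defining the stalk $\sO_X(nD)_P$. Normality gives the Krull description $R = \bigcap_{\mathfrak{p}} R_{\mathfrak{p}}$ over all height-one primes; for a homogeneous element only the graded height-one primes contribute, since any non-graded height-one $\mathfrak{q}$ has graded core $\mathfrak{q}^\ast = 0$ (by height count in a domain), contains no nonzero homogeneous element, and hence has every homogeneous $x \in K(R)^\times$ as a unit in $R_{\mathfrak{q}}$. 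Combining, $f \in R_n$ iff $v_P(f/T^n) + n\,a_P \geq 0$ at every $P$, which is the claimed equality $R_n = H^0(X, \sO_X(nD))\cdot T^n$. Uniqueness of $D$ follows because the $R_n$ determine each $a_P$ asymptotically via minimum pole orders of the associated sections, so any other candidate $\Q$-divisor must agree coefficient by coefficient. The main technical obstacle is the reduction to graded height-one primes in the Krull intersection on homogeneous elements; once that is in hand, the rest is a clean translation through the ramification index $e_P$.
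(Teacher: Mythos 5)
The paper itself gives no proof of this statement: it is quoted as Demazure's theorem and delegated to [D], so there is no internal argument to compare with. Your existence half reconstructs Demazure's classical construction correctly: the degree-zero subfield $K_0$ of the quotient field $Q(R)$ is $K(X)$ and $Q(R)=K_0(T)$; for a codimension-one point $P$ the valuation $w_P$ of the DVR $R_{\mathfrak{p}_P}$ restricts on $K_0$ to $e_Pv_P$; setting $a_P=w_P(T)/e_P$ one gets $w_P(yT^n)=e_P\bigl(v_P(y)+na_P\bigr)$, and the Serre--Krull intersection $R=\bigcap_{\mathrm{ht}\,\mathfrak q=1}R_{\mathfrak q}$ is correctly reduced to graded height-one primes, since a non-graded height-one prime has zero graded core and therefore contains no nonzero homogeneous element. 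Two small points worth recording: one uses $\dim R\geq 2$ so that no graded height-one prime contains $R_+$, which is what makes these primes correspond exactly to the prime divisors of $X$; and the floor in $\sO_X(nD)=\sO_X(\lfloor nD\rfloor)$ is matched because $v_P$ is integer-valued.

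The genuine gap is uniqueness, which you dispose of in one sentence. The inequality $a'_P\geq a_P$ for a competing divisor $D'=\sum_Pa'_PP$ does follow along the lines you indicate, but it needs the unstated step that elements of $R_n$ realize pole order along $P$ asymptotically equal to $na_P$; this is easy (take a homogeneous $u$ of positive degree with $u\notin\mathfrak p_P$, so $v_P(u^j/T^{j\deg u})=-j(\deg u)\,a_P$, and feed $u^j$ into $R_{j\deg u}\subseteq H^0(X,\sO_X(j\deg u\,D'))\cdot T^{j\deg u}$), but it is not nothing. The reverse inequality $a'_P\leq a_P$ is \emph{not} a consequence of any such asymptotic count: enlarging a coefficient of $D$ only enlarges the allowed pole order, so $H^0(X,\sO_X(\lfloor nD'\rfloor))=H^0(X,\sO_X(\lfloor nD\rfloor))$ for all $n$ is a priori compatible with $a'_{P_0}>a_{P_0}$, provided the extra allowance is never used by a global section; your "minimum pole order" remark does not exclude this. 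Excluding it requires a positivity input coming from finite generation of $R$. For instance: having $a'_P\geq a_P$ for all $P$, choose $N$ with $ND$ and $ND'$ integral, $N(a'_{P_0}-a_{P_0})\geq 1$, and $\sL=\sO_X(ND)$ a very ample line bundle (Lemma~\ref{r1}); then $\sO_X(nND+P_0)\subseteq\sO_X(nND')$, and the exact sequence $0\longto \sL^n\longto \sO_X(P_0)\otimes\sL^n\longto Q\otimes\sL^n\longto 0$, with $Q\neq 0$ supported on $P_0$, together with Serre vanishing of $H^1(X,\sL^n)$ and global generation of $Q\otimes\sL^n$ for $n\gg 0$, produces a section of $\sO_X(nND+P_0)$ not lying in $H^0(X,\sL^n)$, contradicting the assumed equality of section spaces (this is the role ampleness of $D$ plays in Demazure's own uniqueness argument). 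With these two additions your proof is complete.
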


We recall some general facts about $\Q$-divisors.

\begin{notations}\label{nn1}Let $X$ be a normal projective  variety over a perfect 
field  $k$ (in our case $X= \mbox{Proj}~R$, where $R$ is a normal graded domain).

The set $W\mbox{div}(X)$ is the set of Weil divisors, where a Weil divisor is
a  formal 
sum of codimension $1$  integral subschemes (prime divisors) of $X$. 
The set  $$\mbox{Div}(X,\Q) = W\mbox{div}(X, \Q) =
W\mbox{div}(X)\tensor_{\Z}\Q,$$
 is the set of  formal linear combinations of codimension one 
integral subschemes of $X$ 
with  coefficients in $\Q$ (called $\Q$-divisors).
Let $K(X)$ denote the function field of $X$. For  
$D\in W\mbox{div}(X, \Q)$ the $\sO_X$-sheaf 
$\sO_X(D)$ is the sheaf  whose space of sections  on  
an open set $U\subset X$ is given by 
$$H^0(U, \sO_X(D)) = \{f\in K(X)\mid \mbox{div}(f)\mid_U+D\mid_U\geq 0\},$$
where $\mbox{div}(f) = \sum_iv_{D_i}(f)D_i$ and  
$v_{D_i}:K(X)\longto \Z\cup \{\infty\}$ is the discrete valuation of $K(X)$ 
corresponding to the prime divisor $D_i$.

In particular, if $D=\sum_ia_iD_i\in W\mbox{div}(X, \Q)$ is a formal sum of 
prime divisors $D_i$, where $a_i\in\Q$ then 
$\sO_X(D) = \sO_X(\lfloor D\rfloor)$, where 
$\lfloor D\rfloor =  \oplus_i \lfloor a_i\rfloor D_i$.
\end{notations}

For the following basic theory of reflexive sheaves we refer to [H1] 
(one can also look up the notes by [S] on his homepage).

\begin{defn} A coherent sheaf $\sF$ on $X$ is {\em reflexive} if the natural 
map of $\sO_X$-modules 
$\alpha:\sF \longto ({\sF}^{\wedge})^{\wedge}$ is an isomorphism, where 
$\sF^{\wedge} = \shom_{\sO_X}(\sF, \sO_X)$.

A rank one reflexive sheaf is invertible on the regular locus of $X$. In fact
$$\{\sO_X(D)\mid D\in W\mbox{div}(X)\} = 
 \{\mbox{rank}~~1~\mbox{reflexive subsheaves of}~~K(X)\}$$ 
and (even if $R$ is not normal)
$$\{\mbox{the Cartier divisors of}~~X\} = \{\mbox{invertible (hence reflexive) 
subsheaves  of}~~ K(X)\}.$$
Hence if $D$ is a Cartier divisor  then $D = \sum a_iD_i$, where $a_i\in \Z$ and hence 
$D = \lfloor D\rfloor$
\end{defn}

As we discussed earlier, 
in case $R$ is standard graded there is a Cartier divisor $D$ 
such that for $m>>0$,
 $R_m = H^0(X, \sO_X(mD))$. On the other hand if $R$ is graded normal domain 
then (by the above  theorem of 
   Demazure) there exists a $\Q$-divisor $D$ (which need not be Cartier, but 
some positive integer multiple of $D$ is a Cartier divisor) such that 
 $R_m = H^0(X, \sO_X(mD))$, for all $m\geq 0$.

\vspace{10pt}

We recall (in Lemma~\ref{r1}) some  relevant properties of $R$ and 
$\sO_X(nD)$ (see [D]).
 
By Lemma~\ref{r2} the existence of an  homogeneous element $T$ of degree $1$ 
in the quotient field of $R$ is 
equivalent to the condition that $R_m\neq 0$ for all $m>>0$ which is 
equivalent to saying that gcd $\{m>0 \mid R_m\neq 0\} =1$.

\begin{lemma}\label{r1} For $R$ and $D$ as in Theorem~\ref{D}, let
 $h_1, \ldots, h_\mu$ denote a set of 
homogeneous generators of $R$  as an $R_0$-algebra, of degrees
$m_1, \ldots, m_{\mu}$  respectively,
and  let $l_1 = \mbox{lcm}~(m_1, \ldots, m_\mu)$. 
Then 
\begin{enumerate}
\item[(a)] for  $n\in l_1\N$,   
the sheaf $\sO_X(nD)$ is a line bundle on $X$.
In particular the canonical multiplication map
$$\sO_X(nD)\tensor \sO_X(iD)\longto 
\sO_X((n+i)D)\quad\mbox{is an isomorphism, for all}\quad i.$$
\item[(b)] For $r = l_1\mu$ the line bundle $\sO_X(rD)$ is very ample on $X$.
\end{enumerate}
 \end{lemma}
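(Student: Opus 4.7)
The plan is to work with the principal affine opens $X_{h_j} := D_+(h_j) \subset X$, which cover $X$ by finite generation of $R$. The key is to translate the generators $h_j$ into trivializing sections via Demazure's theorem: each $h_j \in R_{m_j}$ writes as $h_j = s_j \cdot T^{m_j}$ for a unique $s_j \in H^0(X, \sO_X(m_j D))$, and by the defining property of $\sO_X(m_j D)$ the identity $\mbox{div}(s_j) + m_j D = 0$ holds on $X_{h_j}$, so $s_j$ trivializes $\sO_X(m_j D)|_{X_{h_j}}$.

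For (a), fix $n \in l_1 \N$. For each $j$ we have $m_j \mid l_1 \mid n$, so $s_j^{n/m_j}$ is a well-defined nonvanishing section of $\sO_X(nD)|_{X_{h_j}}$, trivializing it; as the $X_{h_j}$ cover $X$, $\sO_X(nD)$ is a line bundle. For the multiplication isomorphism, work on $X_{h_j}$: both $\sO_X(nD)$ and $\sO_X(iD)$ are subsheaves of the constant sheaf $K(X)$, and the map $\sO_X(nD) \otimes \sO_X(iD) \to \sO_X((n+i)D)$ is simply multiplication in $K(X)$, hence injective. Surjectivity on $X_{h_j}$ amounts to the observation that any local section $g$ of $\sO_X((n+i)D)$ may be written as $s_j^{n/m_j} \cdot (g \cdot s_j^{-n/m_j})$; the second factor lies in $\sO_X(iD)|_{X_{h_j}}$ because $\mbox{div}(g \cdot s_j^{-n/m_j}) + iD = \mbox{div}(g) + (n+i)D \geq 0$ on $X_{h_j}$, using $\mbox{div}(s_j^{-n/m_j}) = nD$ there.

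For (b), granted (a) and $\mbox{Proj}\,R = \mbox{Proj}\,R^{(r)}$, it suffices to show that the Veronese subring $R^{(r)} := \bigoplus_{n \geq 0} R_{nr}$ is standard graded over $k$; then $X = \mbox{Proj}\,R^{(r)} \hookrightarrow \P(R_r^*)$ is a closed immersion pulling $\sO(1)$ back to $\sO_X(rD)$, and the latter is very ample. Standard gradedness is a monomial-counting claim. Take $\prod_j h_j^{c_j} \in R_{nr}$, set $g_j := h_j^{l_1/m_j} \in R_{l_1}$, and write $c_j = a_j(l_1/m_j) + b_j$ with $0 \leq b_j < l_1/m_j$. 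The bound $\sum_j b_j m_j < \mu l_1 = r$ forces $t := \sum_j a_j \geq (n-1)\mu + 1$; put $k := n\mu - t \in \{0, 1, \ldots, \mu - 1\}$. Split the $t$ factors $g_j$ into $n-1$ groups of $\mu$ (each of degree $r$) plus one group of $\mu - k$ factors; adjoining $\prod_j h_j^{b_j}$ (of degree $k l_1$) to this short group produces an element of degree $(\mu - k)l_1 + k l_1 = r$, exhibiting the original monomial as a product of $n$ elements of $R_r$.

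The main obstacle is the combinatorial repackaging in the Veronese-generation step of (b); the chart-by-chart argument in (a) is essentially mechanical once the trivializing sections $s_j$ have been identified via Demazure.
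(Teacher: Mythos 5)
Your proof is correct and takes essentially the same route as the paper: trivialize $\sO_X(nD)$ on the charts $D_+(h_j)$ by powers of the sections corresponding to the generators $h_j$ (the paper writes $\sO_X(nD)\mid_{D_+(h_j)}=h_j^{n/m_j}\sO_X\mid_{D_+(h_j)}$), verify the multiplication map there, and obtain very ampleness of $\sO_X(rD)$ from standard gradedness of the Veronese $R^{(r)}$ together with $X=\mbox{Proj}\,R^{(r)}$. Your explicit monomial bookkeeping and the chart-by-chart check of the multiplication isomorphism are simply expanded versions of the paper's one-line inclusion $R_{mr}\subseteq R_{l_1}^{\mu(m-1)}R_{l_1\mu}\subseteq (R_r)^m$ and its appeal to $nD$ being Cartier, so there is no substantive difference in method.
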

\begin{proof}(a):
 The variety $X$ has the affine open cover 
$\{D_+(h_i)\}_i$, where 
$$\sO_X(nD)\mid_{D_+(h_i)} = \{f/h_i^{m} \mid\deg(f)-m\deg(h_i)= n, 
~f\in R_{\deg(f)}\}
= h_i^{n/m_i}\sO_X\mid_{D_+(h_i)}.$$
  is generated by the element
$h_i^{n/m_i}\in H^0(D_+(h_i), \sO_X\mid_{D_+(h_i)})$,  for all $i$.
 
Since $\sO_X(nD)$ is a Cartier divisor $\sO_X(nD+\lfloor iD\rfloor) = 
\sO_X(\lfloor (n+i)D\rfloor)$.
 
\vspace{5pt}

\noindent{(b)}:  If 
$R^{(r)} := \oplus_{m\geq 0}R_{rm}$ and $R^{(r)}_m:= R_{rm}$. Then  
$R^{(r)}$ is a standard graded ring, as for $m\geq 1$
$$R^{(r)}_m =  R_{mr} \subseteq R_{l_1}^{\mu(m-1)}R_{l_1\mu}
\subseteq R_{l_1\mu}^{m-1}R_{l_1\mu} = (R_1^{(r)})^m.$$
Since  $X = \mbox{Proj}~R^{(r)}$, 
 the sections of $\sO_X(rD)$  give a closed immersion of 
$X$ into $\P_k^h$ where $h = h^0(X,\sO_X(rD))-1$.
\end{proof}

\vspace{5pt}

In the rest of the section we  have the following notations.

\begin{notations}\label{n5}The pair $(R,I)$ is a  fixed  graded pair,
 where $R$ is a normal graded domain  and 
gcd~~$\{n\mid R_n\neq 0\} = 1$.

We fix a homogeneous element $T$ of degree $1$ in the quotient field of $R$.

Let $D \in Div(X, \Q)$ be the divisor,  as in  Theorem~\ref{D}
so that $R = R(X, D) = \oplus_{n\geq 0}H^0(X, \sO_X(nD)).T^n$.

We fix
$r\in\N$, so that $\sO_X(rD)$ is a 
very ample divisor on $X$ and  $R_m\neq 0$, for all $m\geq r$.

For $I$, we fix 
a set of homogeneous generators $f_1, \cdots, f_{s}$ 
 of degrees $d_1, \ldots, d_s$
respectively.

\vspace{5pt}
For the sake of abbreviation we adopt the following Notations.
\vspace{5pt}

Let $\sO_n = \sO_X(nD)$.

\vspace{5pt}
Let $\sL = \sO_r$ be the very ample line bundle on $X$. 

\vspace{5pt}
Let $m_{q, d_i} = m+q-qd_i$ 
where $q=p^n$, for some $n\geq 1$.

\vspace{5pt}
Let $(m_{q, d_i})  = {\lfloor \frac{m+q}{r}\rfloor}r -qd_i$.

\end{notations}
\vspace{5pt}

Since gcd $\{m>0\mid R_m \neq 0\} =1$, the definition of 
 the sequences $\{f_n(R, I)\}_n$ and $\{g_n(R, I)\}_n$  
is  same as  in the case of  standard graded pair (given in [T]).

\begin{defn} For the pair $(R, I)$,  the function   
  $\{f_n(R,I):[0, \infty)\to [0, \infty)\}_{n\in \N}$ is given by 
$$f_{n}(R,I)(x) = {1}/{q^{d-1}}\ell({R}/{I^{[q]}})_{\lfloor xq\rfloor},
~~\mbox{where}~~q=p^n$$ 
and, for
 $x= (1-t)m/q + t(m+1)/q$, where $t\in [0, 1)$,
 $$g_n(R,I)(x) = (1-t)f_n(R,I)({m}/{q}) + (t)f_n(R,I) 
({m+1}/{q}).$$
\end{defn}

For given $m\in N$ and $q=p^n$,
we consider  the following short exact sequence of $\sO_X$-modules

\begin{equation}\label{e10}0\longto \sF_{m,q}\longto  \oplus_{i=1}^s \sO_{m_{q, d_i}}
\longby{\varphi_{m, q}} \sO_{m+q}
\longto 0,\end{equation}
where $\varphi_{m,q}(a_1,\ldots, a_s) = \sum_ia_if_i^q$.

Then
$$\begin{array}{lll}
f_n(R,I)(\frac{m+q}{q}) & = & 
 \frac{1}{q^{d-1}}\ell(R/I^{[q]})_{m+q} = 
 \frac{1}{q^{d-1}}\left[\ell(R_{m+q})-\sum_{i=1}^s\ell(f_i^qR_{m+q-qd_i})\right]\\\\
& = &  \frac{1}{q^{d-1}}\left[h^0(X, \sO_{m+q})-
\oplus_ih^0(X, \sO_{m_{q,d_i}}) + h^0(X, \sF_{m,q})\right].\end{array}$$

 To compare $f_n$ and $f_{n+1}$, we use the following crucial technical  result, 
which will be proved below in Section~5.

\begin{lemma}\label{ML}{\em (Main Lemma)}.\quad For  $\sF_{m,q}$ as in (\ref{e10}), 
there exists a  constant $C$ such that, 
for all $m\geq 0$ and $q = p^n$ and $0\leq n_1<p$,
$$|h^0(X, \sF_{mp+n_1, qp})-p^{d-1}h^0(X, \sF_{m,q})| \leq  C (mp+qp)^{d-2},$$
and 
$$|h^0(X, \sO_{m_qp+n_1})-p^{d-1}h^0(X, \sO_{m_q})|\leq  C (mp+qp)^{d-2},$$
where  $m_q = m_{q,d_i}$,  $m_q = (m_{q,d_i})$, for 
$1\leq i \leq s$, or $m_q =m+q$.\end{lemma}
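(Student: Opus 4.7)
The strategy is to reduce both estimates to computations on the regular locus $U := X_{\reg}$, whose complement $Z$ has codimension at least two in $X$. On $U$ each $\sO_n = \sO_X(nD)$ is invertible, the Frobenius $F \colon U \to U$ is finite flat of degree $p^{d-1}$, and Serre vanishing applies to suitable twists; since the sheaves involved are reflexive on the normal $X$, one has $h^0(X, \sF) = h^0(U, \sF|_U)$, so no information is lost by restricting to $U$.

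For the line-bundle estimate, Demazure's theorem identifies $h^0(X, \sO_n) = \dim_k R_n$. The Hilbert function of the graded domain $R$ is a quasi-polynomial in $n$ of degree $d-1$ whose residue-class polynomials all share the same leading coefficient (transparent via the standard-graded Veronese $R^{(r)}$ of Lemma~\ref{r1}(b)). An elementary binomial expansion of $(Np+n_1)^{d-1}$ around $p^{d-1}N^{d-1}$ then yields $|\dim R_{Np+n_1} - p^{d-1}\dim R_N| \leq C(Np)^{d-2}$ uniformly in $N$ and $0 \leq n_1 < p$, the finitely many small or negative values of $N$ being absorbed into the constant.

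For the kernel-sheaf estimate, the central observation on $U$ is the Frobenius-twist identification
$$\sF_{m,q}|_U \;\cong\; F^{n*}\sG|_U \tensor \sO_{m+q}|_U, \qquad \sG := \ker\bigl(\oplus_i \sO_X(-d_iD) \by{(f_i)} \sO_X\bigr),$$
with $\sG$ a fixed reflexive sheaf of rank $s-1$ on $X$, obtained by pulling back the defining sequence of $\sG$ by $F^n$ on $U$ and tensoring with $\sO_{m+q}|_U$. Writing $D = \sum_i a_iD_i$, comparison of $\lfloor(pN+n_1)a_i\rfloor$ with $p\lfloor Na_i\rfloor$ gives $\sO_{pN+n_1}|_U \cong (\sO_N|_U)^{\otimes p} \tensor \sO_U(\Delta_{N,n_1})$, where $\Delta_{N,n_1}$ is a Weil divisor with coefficients uniformly bounded by a constant $C_0 = C_0(p, D)$, independent of $N$ and $0 \leq n_1 < p$. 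Combining these identifications gives on $U$
$$\sF_{mp+n_1,qp}|_U \;\cong\; F^*(\sF_{m,q}|_U) \tensor \sO_U(\Delta_{m+q,n_1}),$$
and after splitting $\Delta_{m+q,n_1}$ into effective and anti-effective parts and pushing forward by $j \colon U \into X$, we obtain (modulo codimension-two corrections) an exact sequence on $X$
$$0 \to F^{*}\sF_{m,q} \to \sF_{mp+n_1,qp} \to \sQ_{m,q,n_1} \to 0,$$
with $\sQ_{m,q,n_1}$ supported on $\supp(\Delta_{m+q,n_1}) \cup Z$, a subscheme of dimension at most $d-2$.

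We then split $h^0(X, \sF_{mp+n_1,qp}) - p^{d-1}h^0(X, \sF_{m,q})$ as $[h^0(\sF_{mp+n_1,qp}) - h^0(F^*\sF_{m,q})] + [h^0(F^*\sF_{m,q}) - p^{d-1}h^0(\sF_{m,q})]$. The first bracket is bounded by $h^0(\sQ_{m,q,n_1})$, which is $O((mp+qp)^{d-2})$ by Snapper's lemma applied to $\sQ_{m,q,n_1}$ polarized by the very ample $\sL$, whose twisting power grows like $(mp+qp)/r$ on the $(d-2)$-dimensional support. The second bracket is bounded by asymptotic Riemann--Roch on $U$: finiteness and flatness of $F$ together with the projection formula give $h^0(F^*\sF_{m,q}|_U) = h^0(\sF_{m,q}|_U \tensor F_*\sO_U)$, whose Snapper polynomial has the same leading term as $p^{d-1}$ times that of $\sF_{m,q}|_U$, so the difference is a polynomial of degree at most $d-2$ in $m+q$ with $p$-dependent coefficients, still of order $O((mp+qp)^{d-2})$. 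The main technical obstacle is keeping these estimates uniform in the Frobenius exponent $n$; this is handled by parameterizing $m+q$ modulo the period $r$ (yielding finitely many reflexive sheaf types), applying Serre vanishing to each type, and using the Chern-class scaling $c_1(F^{n*}\sG) = q \cdot c_1(\sG)$ to confirm that the subleading contributions remain of order $(mp+qp)^{d-2}$.
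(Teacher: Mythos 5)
Your overall architecture -- compare $\sF_{mp+n_1,qp}$ with $F^*\sF_{m,q}$, then compare $F^*\sF_{m,q}$ with $p^{d-1}$ copies of $\sF_{m,q}$, pushing all errors onto loci of dimension $\le d-2$ -- is the paper's, but two of your central steps fail as stated. The ``Frobenius-twist identification'' $\sF_{m,q}|_U\cong F^{n*}\sG|_U\tensor\sO_{m+q}|_U$, and the derived isomorphism $\sF_{mp+n_1,qp}|_U\cong F^*(\sF_{m,q}|_U)\tensor\sO_U(\Delta_{m+q,n_1})$, are false in general even on the regular locus: the floor does not commute with multiplication, so $q\lfloor -d_iD\rfloor+\lfloor (m+q)D\rfloor\neq\lfloor (m+q-qd_i)D\rfloor$, and the bounded correction divisors differ from summand to summand and from the target; the kernel of a map whose terms are twisted by \emph{different} corrections is not a single twist of the Frobenius pullback of a fixed kernel. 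This is precisely the obstruction the paper emphasizes (there is no single $V$ with $\ker\varphi_{m,q}=F^{n*}V\tensor\sO_m$). What survives is only a \emph{generically isomorphic comparison map}, and it has to be constructed: the paper does this by producing generically isomorphic maps $\psi_n\colon F^*\sO_n\to\sO_{np}$ from $\lfloor npD\rfloor=p\lfloor nD\rfloor+\sum_i m_iD_i$, after first replacing (\ref{e10}) by the locally split sequence (\ref{e20}) whose target is the honest line bundle $\sL^{\lfloor (m+q)/r\rfloor}$, so that the induced map on kernels has kernel and cokernel equal to those of the middle map, of support dimension $<d-1$ (Lemma~\ref{l3}); the passage from $\sG_{m,q}$ back to $\sF_{m,q}$ and the shift by $n_1$ are then absorbed by the multiplication-by-a-section estimates of Lemma~\ref{l2}, which also require choosing the shift so that the relevant $H^0(X,\sO_{\tilde r})$ is nonzero. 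Your ``splitting $\Delta_{m+q,n_1}$ into effective and anti-effective parts and pushing forward'' never actually produces the map $F^{*}\sF_{m,q}\to\sF_{mp+n_1,qp}$ or the sequence with third term $\sQ_{m,q,n_1}$, because the anti-effective part destroys the map; and for non-reflexive sheaves such as $F^*\sF_{m,q}$ you cannot freely pass between $h^0$ on $X$ and on $U$.

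More seriously, your second bracket, $|h^0(X,F^*\sF_{m,q})-p^{d-1}h^0(X,\sF_{m,q})|=O((mp+qp)^{d-2})$, cannot be obtained by matching leading terms of Snapper polynomials or by Chern-class scaling: those arguments control $\chi$, not $h^0$, and here $h^0$ differs from $\chi$ by higher cohomology which is in general of order $(m+q)^{d-1}$ -- that discrepancy is exactly the nontrivial content of the HK density function, so an Euler-characteristic argument proves nothing and in particular cannot give uniformity in $q$. The projection formula $h^0(F^*\sF_{m,q})=h^0(\sF_{m,q}\tensor F_*\sO_X)$ is the right first move, but it must be followed by a sheaf-level decomposition of $F_*\sO_X$: the paper uses $0\to\oplus^{p^{d-1}}\sO_X(-m_2D)\to F_*\sO_X\to Q''\to 0$ with $\dim\supp Q''<d-1$ (Lemma 2.9 of [T]), tensors it against the locally split sequence (\ref{e20}), and concludes with Lemma~\ref{l8}(2) plus the shift estimate removing the $-m_2$ twist (Lemma~\ref{l7}). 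Without such a decomposition, or some other device that controls $h^0$ rather than $\chi$, your proof of the kernel-sheaf estimate is incomplete. By contrast, your quasi-polynomial argument for the $\sO_{m_q}$ estimate is fine in outline (each Veronese piece $\oplus_n R_{nr+j}$ is a rank-one torsion-free $R^{(r)}$-module, so all constituents of the Hilbert quasi-polynomial share the same leading coefficient), provided you note that constants may depend on the fixed $p$ but not on $q$.
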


\vspace{10pt}
The following proposition proves the existence of the HK density function for 
normal graded domains.

\begin{propose}\label{p1} If $R = \oplus_{n\geq 0}R_n$ is a normal graded domain
 and gcd~~$\{n\mid R_n\neq 0\} = 1$ then for a graded pair $(R, I)$ 
the sequence $\{f_n(R,I)\}_n$  is uniformly convergent.\end{propose}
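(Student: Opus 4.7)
The plan is to show that $\{f_n(R,I)\}_n$ is Cauchy in sup-norm by comparing consecutive terms via the Main Lemma, producing a bound $\|f_{n+1}-f_n\|_\infty = O(1/p^{n+1})$. Summing this geometric series then gives the desired uniform convergence.

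First I fix $x\in[0,\tilde m]$, where $\tilde m$ is the uniform support bound from Lemma~\ref{l1}; outside $[0,\tilde m]$ every $f_n$ vanishes, so there is nothing to check there. Setting $q=p^n$ and $m=\lfloor xq\rfloor-q$, the cohomology formula derived from the short exact sequence~(\ref{e10}) gives
$$f_n(R,I)(x) \;=\; \frac{1}{q^{d-1}}\Bigl[h^0(X,\sO_{m+q})-\sum_{i=1}^s h^0(X,\sO_{m_{q,d_i}})+h^0(X,\sF_{m,q})\Bigr].$$
For the level $n+1$ index I observe that $\lfloor x(qp)\rfloor=p\lfloor xq\rfloor+n_1$ for some $0\le n_1<p$, so the corresponding shifted index is $m':=\lfloor x(qp)\rfloor-qp=mp+n_1$. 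The crucial identities $m'+qp=(m+q)p+n_1$ and $m'_{qp,d_i}=m_{q,d_i}p+n_1$ put the $(n+1)$-st cohomology dimensions in exactly the form addressed by Lemma~\ref{ML}.

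Next I subtract the two expressions, pull out the overall factor $1/(qp)^{d-1}$, and apply the Main Lemma to each of the $s+2$ resulting cohomology differences. Each is bounded by $C(mp+qp)^{d-2}$, and since $m\le\tilde m\,q$ on the support, this yields
$$|f_{n+1}(R,I)(x)-f_n(R,I)(x)| \;\le\; \frac{(s+2)\,C\,(mp+qp)^{d-2}}{(qp)^{d-1}} \;\le\; \frac{C'}{p^{n+1}}$$
uniformly in $x\in[0,\tilde m]$. A standard telescoping/geometric series argument then shows $\{f_n(R,I)\}_n$ is Cauchy in sup-norm, hence uniformly convergent.

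The main obstacle is the Main Lemma (Lemma~\ref{ML}) itself, whose proof is deferred to Section~5. The subtlety, compared with the standard graded case, is that $\sO_X(D)$ is only a reflexive sheaf (invertible only on $X\setminus\Sing(X)$) and $\sF_{m,q}$ does not arise as an $F^{n\ast}$ of a single coherent sheaf twisted by one line bundle; rather, each pair $(m,q)$ produces its own $\sF_{m,q}$. Establishing the cohomology comparison will therefore require restricting to the regular locus, where the very ample line bundle $\sL=\sO_r$ makes an asymptotic Riemann--Roch estimate available, and separately controlling the contribution of the singular locus (of strictly smaller dimension) so as to absorb the residual error of order $(m+q)^{d-2}$.
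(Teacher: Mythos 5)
Your strategy is essentially the paper's own: compare $f_n$ and $f_{n+1}$ pointwise via the cohomological expression coming from the sequence (\ref{e10}), apply the Main Lemma~\ref{ML} to each of the $s+2$ cohomology differences, and use the uniform support bound of Lemma~\ref{l1} to turn the $O((m+q)^{d-2})$ errors into a bound of order $1/p^{n+1}$, whence a telescoping/Cauchy argument. Your index bookkeeping ($\lfloor xqp\rfloor = p\lfloor xq\rfloor + n_1$ with $0\le n_1<p$, hence $m'=mp+n_1$ and $m'_{qp,d_i}=m_{q,d_i}p+n_1$) is exactly what the paper does for $x\ge 1$.

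The one genuine, though easily repaired, gap is the range $0\le x<1$. There $m=\lfloor xq\rfloor-q$ is negative, whereas the sequence (\ref{e10}) and the Main Lemma~\ref{ML} are stated (and proved) for $m\ge 0$; so you cannot simply ``fix $x\in[0,\tilde m]$'' and apply the lemma uniformly over the whole support. The paper handles this range separately: since the generators $f_i$ of $I$ have positive degrees, $I^{[q]}$ sits in degrees $\ge q$, so for $\lfloor xq\rfloor<q$ one has $f_n(x)=\ell(R_{\lfloor xq\rfloor})/q^{d-1}=P(\lfloor xq\rfloor)/q^{d-1}$ for $q\gg 0$, with $P$ the Hilbert polynomial of $R$ of degree $d-1$, and a direct polynomial estimate gives $|f_n(x)-f_{n+1}(x)|=O(1/q)$ on $[0,1)$; the constant is then enlarged to cover both ranges. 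Add this case split (or else justify that the estimates of Lemma~\ref{ML} remain valid for the negative values of $m$ that occur when $x<1$), and your argument is complete.
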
 
\begin{proof} For brevity, in the rest of the  
proof, we denote $f_n(R,I)$ by $f_n$.

Let $x\geq 1$. For $q=p^n$ and $m+q\leq xq <m+q+1$,
$$f_{n}(x) = {1}/{q^{d-1}}\ell(R/I^{[q]})_{\lfloor xq\rfloor} = 
{1}/{q^{d-1}}\ell(R/I^{[q]})_{m+q}.$$

Therefore there is $n_1$ such that $0\leq n_1 < p$ and   
$$f_{n+1}(x) = {1}/{(qp)^{d-1}}\ell(R/I^{[qp]})_{mp+qp+n_1}.$$

For $\sO_m = \sO_X(mD)$ consider
the short exact sequences of $\sO_X$-modules (as in (\ref{e10})

$$0\longto \sF_{m,q}\longto  \oplus_{i=1}^s \sO_{m+q-qd_i}
\longby{\varphi_{m, q}} \sO_{m+q}
\longto 0,$$

$$0\longto \sF_{mp+n_1,qp}\longto  \oplus_{i=1}^s \sO_{mp+qp+n_1-qpd_i}
\longby{\varphi_{mp+n_1, qp}} \sO_{mp+qp+n_1}\longto 0.$$
Therefore 
$$f_n(x) = \frac{p^{d-1}[h^0(X, \sO_{m+q})-
\sum_ih^0(X, \sO_{m+q-qd_i}) + h^0(X, \sF_{m,q})]}{(qp)^{d-1}},$$

$$f_{n+1}(x) = \frac{[h^0(X, \sO_{mp+qp+n_1})-
\sum_ih^0(X, \sO_{mp+qp+n_1-qpd_i}) + h^0(X, \sF_{mp+n_1,qp})]}{(qp)^{d-1}}.$$

By the Main Lemma~\ref{ML},
 there is a constant $C_0>0$ such that 
$$|f_n(x) -f_{n+1}(x)| \leq C_0 {(mp+qp)^{d-2}}/{(qp)^{d-1}}.$$

Since $\supp~f_n \subseteq [0, {\tilde m}]$, where ${\tilde m}$ is as in Lemma~\ref{l1},
 we can assume $(m+q)/q \leq {\tilde m}$ and hence there is  a constant 
$C_1$ 
 such that 
$$|f_n(x)-f_{n+1}(x)| \leq  
 {C_1}/{qp}, \quad\mbox{for all}\quad x\geq 1.$$

We can further  choose  $C_1$ such that the above inequality also holds for all
$0\leq x\leq 1$:
Because 
if $0\leq x<1$ then 
$$f_n(x) = {\ell(R)_{\lfloor xq\rfloor}}/{q^{d-1}} = 
{P({\lfloor xq\rfloor})}/{q^{d-1}}
~~\mbox{for all}\quad q = p^n >>0,$$
 where $P(x)\in \Q[X]$ is  the  Hilbert polynomial of $R$ hence of degree $d-1$.
This proves the proposition.
\end{proof}

\section{The Main theorem}
In this section we will prove that  the Main Theorem holds for the general 
graded pairs $(R,I)$. (Here we still assume the Main Lemma~\ref{ML} which 
will be proved in the next section).

Throughout this section 
$(R, I)$ is a graded pair and 
gcd $\{m\mid R_m\neq 0\} = n_0$.

 For a finitely generated graded $R$-module, 
the functions $f_n(M_R,I)$ and $g_n(M_R,I)$ are as in Definition~\ref{d1}.

\begin{rmk}\label{n6}If $S\longto R$ is a  degree preserving module-finite 
map of graded domains, where $(S, I)$ is a graded pair and  
$n_0 = \mbox{gcd}~\{n\mid R_n\neq 0\}$ and 
$m_0 = \mbox{gcd}~\{n\mid S_n\neq 0\}$,
then the HK density function of $(R, IR)$ as a module over itself is  ($q=p^n$)
  $$f_{R, IR}(x) : = \lim_{n\to \infty} f_n(R, IR)(x) 
= \frac{1}{q^{d-1}}\left(\ell(R/I^{[q]}R)_{\lfloor xq\rfloor n_0}\right).$$

Whereas  the HK density function of $(R, IR)$ as a module over $S$ is 
$$f_{R_S, I}(x) := \lim_{n\to \infty} f_n(R_S, I)(x)
 = \frac{1}{q^{d-1}}\left(\ell(R/I^{[q]}R)_{\lfloor 
xq\rfloor m_0}
+
\cdots +\ell(R/I^{[q]}R)_{\lfloor xq\rfloor m_0+m_0-1}\right).$$
The existence of both the limits is shown in the  following Theorem~\ref{t1}.
\end{rmk}

We use the following lemma to reduce the problem of convergence
of $\{f_n(M_R, I)\}$ to the problem of convegence of $\{f_n(S, IS)\}$, 
where $S$ is the normalization of  $R$ in $Q(R)$.

\begin{lemma}\label{l12} If \mbox{gcd}~$\{m>0\mid R_m\neq 0\} = 1$ 
and $N$, $N'$ are  finitely generated graded $R$-modules with 
 the  exact sequence of graded $R$-linear maps 
\begin{equation}\label{se}
0 \longto  N\longby{\phi} N'\longto Q''\longto 0 \end{equation}
 such that 
the {\em supp dim} $\,Q''< d$ and the map $\phi$ is of degree $0$.

\vspace{5pt}

Then  the sequence $\{f_n(N_R,I)\}_n$ is uniformly convergent if and only if 
$\{f_n(N_R',I)\}_n$ is so. Moreover in that case 
$$\lim_{n\to \infty}f_n(N_R,I) =  \lim_{n\to \infty}f_n(N_R',I).$$
\end{lemma}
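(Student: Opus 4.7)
The plan is to prove the stronger statement that $f_n(N_R,I) - f_n(N'_R,I) \to 0$ uniformly on $[0,\infty)$; this yields both the biconditional and the equality of limits. Since $n_0 = 1$, one has $f_n(M_R,I)(x) = q^{-(d-1)}\,\ell(M/I^{[q]}M)_{\lfloor xq\rfloor}$ with $q=p^n$, and by Lemma~\ref{l1} the supports of both $f_n(N_R,I)$ and $f_n(N'_R,I)$ are contained in a common interval $[0,\tilde m]$. Hence it suffices to bound $|\ell(N/I^{[q]}N)_m - \ell(N'/I^{[q]}N')_m|$ uniformly in $0 \leq m \leq \tilde m\, q$.

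Tensoring the given short exact sequence with $R/I^{[q]}$ produces an exact sequence
$$\mathrm{Tor}_1^R(R/I^{[q]},Q'') \longto N/I^{[q]}N \longto N'/I^{[q]}N' \longto Q''/I^{[q]}Q'' \longto 0,$$
which at the level of lengths in each graded degree $m$ gives
$$\bigl|\ell(N/I^{[q]}N)_m - \ell(N'/I^{[q]}N')_m\bigr| \;\leq\; \ell(Q''/I^{[q]}Q'')_m + \ell(\mathrm{Tor}_1^R(R/I^{[q]},Q''))_m.$$
The quotient term is easy to control: since $\dim Q'' \leq d-1$, its Hilbert function satisfies $\ell(Q''_m) = O(m^{d-2})$, hence $\ell(Q''/I^{[q]}Q'')_m \leq \ell(Q''_m) = O(q^{d-2})$ for $m \leq \tilde m\, q$.

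The main obstacle is obtaining a comparable $O(q^{d-2})$ estimate on the Tor term uniformly in $m$ and $q$. A naive free-presentation bound $\mathrm{Tor}_1^R(R/I^{[q]},Q'') \hookrightarrow K/I^{[q]}K$ is useless since the first syzygy $K$ has full dimension $d$, and its graded pieces are of order $q^{d-1}$. My plan is to argue by d\'evissage: using a prime filtration of $Q''$ and iterating the long exact Tor sequence, reduce to the case $Q'' = (R/\mathfrak{p})(\text{shift})$ with $\dim R/\mathfrak{p} < d$. For such a $Q''$ one has $\mathrm{Tor}_1^R(R/I^{[q]}, R/\mathfrak{p}) = (\mathfrak{p}\cap I^{[q]})/(\mathfrak{p}\, I^{[q]})$, which is annihilated by any fixed nonzero $r \in \mathfrak{p}$, so it is a finitely generated graded module over the $(d{-}1)$-dimensional quotient ring $R/(r)$. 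A graded Artin--Rees comparison of the two filtrations $\{\mathfrak{p}\cap I^{[q]}\}_q$ and $\{\mathfrak{p}\,I^{[q]}\}_q$, combined with the Hilbert-function estimates available on the lower-dimensional ring $R/(r)$, then yields the required uniform bound $\ell(\mathrm{Tor}_1)_m = O(q^{d-2})$. Putting the two estimates together gives $|f_n(N_R,I)(x) - f_n(N'_R,I)(x)| = O(q^{-1})$ uniformly in $x$, which completes the proof.
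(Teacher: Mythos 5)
Your reduction to bounding the two correction terms in the sequence obtained by tensoring with $R/I^{[q]}$ is the right skeleton, and the estimate for $Q''/I^{[q]}Q''$ is fine; the gap is in the treatment of the Tor term, which is exactly where your argument stops being a proof. A ``graded Artin--Rees comparison of the filtrations $\{\mathfrak{p}\cap I^{[q]}\}_q$ and $\{\mathfrak{p}I^{[q]}\}_q$'' is not an available tool: Artin--Rees controls ordinary powers $I^n$, and Frobenius powers do not form a multiplicative filtration, so no standard statement gives a comparison that is uniform in $q$. Moreover, even granting that $(\mathfrak{p}\cap I^{[q]})/\mathfrak{p}I^{[q]}$ is killed by a fixed nonzero $r\in\mathfrak{p}$ and hence lives over the $(d-1)$-dimensional ring $R/(r)$, that dimension statement alone does not yield $\ell(\mathrm{Tor}_1)_m=O(q^{d-2})$ degreewise: the graded piece in degree $m$ is bounded by (number of generators)$\times C\,m^{d-2}$ only if you control the number and degrees of generators of this module \emph{uniformly in} $q$, and that uniform control is precisely the hard point your sketch does not address. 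So as written, the d\'evissage-plus-Artin--Rees plan does not close.

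The fix is much simpler than what you propose, and it is what the paper does. Since Tor is balanced, compute $\mathrm{Tor}_1^R(R/I^{[q]},Q'')$ from a graded free resolution of $R/I^{[q]}$ whose first term is $\oplus_{i=1}^sR(-qd_i)$ (the presentation by $f_1^q,\ldots,f_s^q$): then $\mathrm{Tor}_1^R(R/I^{[q]},Q'')$ is a subquotient of $\oplus_{i=1}^sQ''(-qd_i)$, so $\ell\bigl(\mathrm{Tor}_1^R(R/I^{[q]},Q'')\bigr)_m\leq s\,C_{Q''}m^{d-2}$, which is the bound you need with no d\'evissage at all. The paper phrases this without mentioning Tor: it uses the functorial four-term sequences $0\longto \Ker\Phi_M\longto\oplus_iM(-qd_i)\longby{\Phi_M}M\longto M/I^{[q]}M\longto 0$ and applies the snake lemma to $0\to N\to N'\to Q''\to 0$, so the kernel of $N/I^{[q]}N\to N'/I^{[q]}N'$ is hit by $\Ker\Phi_{Q''}\subseteq\oplus_iQ''(-qd_i)$, whose graded pieces are $O\bigl((m+q)^{d-2}\bigr)$ because $\dim Q''\leq d-1$. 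With that replacement (and your remark that the supports are uniformly bounded, plus the separate easy case $0\leq x<1$), your argument becomes correct and coincides with the paper's.
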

\begin{proof} Here if $M$ is a graded $R$-module then 
the function
 $f_n(M_R, I):[0, \infty)\longto [0, \infty)$ is given by 
$x\to  {\ell(M/I^{[q]}M)_{\lfloor xq\rfloor}}/{q^{d-1}}$, where
$q=p^n$.

Let $I$ have homogeneous generators $f_1, \ldots, f_s$ of degree 
$d_1, \ldots, d_s$ respectively. Then for 
 any graded $R$-module $M$, we define 
$$\Phi_M:\oplus_i^sM(-qd_i) \longto M\quad\mbox{given by}\quad
(m_1, \ldots, m_s) \to \sum_if_i^qm_i$$ 

This gives graded degree $0$ maps of graded $R$-modules, functorial in $M$,  
$$0\longto \mbox{Ker}~\Phi_M \longto \oplus_{1}^sM(-qd_i) \longby{\Phi_M} M\quad
\longto \mbox{Coker}~\Phi_M\longto 0.$$

Now the snake lemma applied to (\ref{se}) gives the following exact sequence 
of graded $R$-modules

$$ \longto
\mbox{Ker}~\Phi_{Q''} \longto \mbox{Coker}~\Phi_{N} \longto \mbox{Coker}~\Phi_{N'} 
\longto \mbox{Coker}~\Phi_{Q''} \longto 0,$$
where $f_n(N'_R, I)(\frac{m+q}{q}) = \ell(\mbox{Coker}~\Phi_{N'})_{m+q}$ and
$f_n(N_R, I)(\frac{m+q}{q}) = \ell(\mbox{Coker}~\Phi_{N})_{m+q}$.

Let $C_{Q''}$ be constant such that, for all $m>0$, 
$\ell(Q''_m)\leq C_{Q''}m^{d-2}$ (such a
constant exists by the hypothesis on support dimensions). 

$$|\ell(\mbox{Coker}~\Phi_{N'})_{m+q} -\ell(\mbox{Coker}~\Phi_{N})_{m+q}|
\leq 2C_{Q''}(m+q)^{d-2}.$$

Now, for  $x\geq 1$ we have   $m+q\leq xq<m+q+1$ for some $m\geq 0$ and 
so we have  
$$|{f_n}(N_R,I)(x)-{f_n}(N_R',I)(x)|\leq 2C_{Q''}x_0^{d-2}/q,$$
  
where by Lemma~\ref{l1},  we may fix an  $x_0$ such that 
 $\supp {f_n}(N_R,I)$ and $\supp {f_n}(N_R',I)$ are  subsets of 
$[0, x_0]$, for all $n\geq 1$.

If $0\leq x <1$ then $m\leq xq <m+1$, for some $m < q$.
It is easy to check that in this case  
$$|{f_n}(N_R,I)(x)-{f_n}(N_R',I)(x)|= 2C_{Q''}m^{d-2}/{q}^{d-1} \leq
2C_{Q''}/{q}.$$
This proves the lemma.
\end{proof}

Now we are ready to prove the Main Theorem.

\vspace{10pt}

\noindent{\underline {Proof of the Main Theorem~\ref{t1}}}:\quad Let $\mbox{gcd}~\{n\mid R_n \neq 0\} = n_0$. 
Let $S = R^{(n_0)}$, where the $n^{th}$ degree component of $R^{(n_0)}$ 
is $R_{nn_0}$. 
Then $S$ is a graded domain, where  
{gcd}~$\{n\mid S_n \neq 0\} = 1$. (Note that $S=R$ as rings, but the grading is changed.)

Note that assertion (2)~(b) follows from assertion (1).

It is easy to prove that the  
 uniform convergence of $\{g_n(M_R,I)\}_n$ is equivalent to  
the uniform convergence of $\{f_n(M_R,I)\}_n$.

We first prove the theorem for $M=R$, where 
 it is sufficient  to prove  the uniform convergence of  
$\{f_n(R,I)\}_n$. 
By definition,  
 $f_n(R,I) = f_n(S, I)$, for all $n$.

Let ${\tilde S} = \oplus_{n}{\tilde S}_n$ denote the normalization of $S$ in 
its quotient field then 
 the inclusion map $S\longto {\tilde S}$ is a module finite graded map of 
degree $0$,  
and  we have the short exact sequence 
of graded $S$-modules
$$0\longto S\longto {\tilde S} \longto Q''\longto 0,$$
where support dim~$Q'' \leq d-1$.

 By  Proposition~\ref{p1},
the sequence $\{f_n({\bar S},I{\bar S})$  is uniformly convergent.
But  $f_n({\bar S}_S, I) = f_n({\bar S}, I{\bar S})$. Hence the uniform 
convergence of $\{f_n(R,I)\}_n$   follows by Lemma~\ref{l12}.

\vspace{10pt}
We now consider the general case of a finite graded module $M$.

\vspace{10pt}
Let ${\bar M} = \oplus_n{\bar M}_n$, where 
${\bar M}_n = M_{nn_0}+\cdots +M_{nn_0+n_0-1}$ denotes the degree $n$ 
component of ${\bar M}$.
If $M$ is generated by  homogeneous elements  $g_1, \ldots, g_{\nu}$ as an 
$R$-module then 
${\overline{M}}$ is generated by $g_1, \ldots, g_{\nu}$ as an 
$S$-module.
Hence ${\overline{M}}$ is a finitely generated graded $S$-module. 
Also, for all $n\geq 1$,  $f_n(M_R,I) = f_n({\overline{M}}_S, IS)$
and $\rank_RM = \rank_S{\overline{M}}$.

\vspace{5pt}

\noindent{\bf Claim}.\quad There exists 
 an exact sequence of graded $S$-modules
$$0\longto  \oplus^{n_1}S(-a) \longby{\phi} {\overline{M}}\longto Q''\longto 0,$$
where $\phi$ is a graded map of degree $0$ and $\dim~(Q'')\leq d-1$.

\vspace{5pt}

\noindent{{\underline{Proof of the claim}}}:\quad For  the multiplicatively 
closed set
$T = S\setminus \{0\}$, the  $T^{-1}S$-module
$T^{-1}{\overline {M}}$ is  free of finite rank, say $n_1$ and is 
 generated by a finite set of homogeneous elements. Hence  
 we can choose homogeneous elements $m_1,\ldots, m_{n_1}$ in 
${\overline{M}}$ of degrees
$d_1,\ldots, d_{n_1}$ respectively such that the
$m_i's$ give a basis for $T^{-1}{\overline{M}}$.

Since gcd $\{n\mid S_n\neq 0\} =1$, we have $m_0$ such that $S_m\neq 0$, for all
$m\geq m_0$. 
Let $a>0 $ such that $a\geq \max\{m_0+d_i, m_0\}_i$ and 
let  $s_i\in S_{a-d_i}\setminus \{0\}$. Then 
$s_1m_1, \ldots, s_{n_1}m_{n_1}\in {\overline{M}}$ are homogeneous elements 
(each of degree  $a$) 
 and generate $T^{-1}{\overline {M}}$ as $T^{-1}S$-module.
Hence we have a generically isomorphic map 
$\oplus^{n_1}S(-a) \longto  {\overline {M}}$ of graded $S$-modules of degree $0$.
The map is injective as $S$ is  a domain.
This proves the claim.
\vspace{5pt}

Now the theorem (1) and (2)~(a) follows from  
Lemma~\ref{l12}.\hspace{50pt}\hfill $\Box$

Note that assertion (2)~(b) follows from assertion (1).

\vspace{10pt}

As we remarked earlier (Remark~\ref{n6}), 
for a finite map  $S\longto R$ as in Notations~\ref{n6}, 
 the two HK density functions, for the pair $(R, IR)$,
namely  $f_{R,IR}$ and   $f_{R_S, I}$ 
need not be the same functions
 but can be recovered from each other as follows.

\vspace{10pt}

\begin{lemma}\label{l13}Let $S\longrightarrow R$ be the module-finite map as in 
Notations~\ref{n6}. Let $m_0 = \mbox{gcd}\{n >0\mid S_n\neq 0\}$ and $n_0 = 
\mbox{gcd}\{n>0\mid R_n\neq 0\}$ then 
$$(l_0)f_{R,IR}(xl_0) = f_{R_S, I}(x) = 
\left(\rank_SR\right)f_{S,I}(x),
\quad\mbox{for all}\quad 
x\in \R_{\geq 0},$$
where $l_0 = m_0/n_0$ is an integer.
Hence $f_{R, IR} \equiv f_{R_S, I}$ if $m_0 = n_0$.
\end{lemma}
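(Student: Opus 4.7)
The plan is to establish each equality in turn; the final clause ($f_{R,IR}\equiv f_{R_S,I}$ when $m_0=n_0$) will then be immediate from the special case $l_0=1$. First I would verify that $l_0=m_0/n_0$ is a positive integer: since $S\subset R$ is degree-preserving, every nonzero homogeneous element of $S$ is a nonzero homogeneous element of $R$, so its degree is divisible by $n_0$, whence $n_0\mid m_0$.

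For the second equality $f_{R_S,I}(x)=(\rank_S R)\,f_{S,I}(x)$, I would simply apply the Main Theorem~\ref{t1}, part~(2)(a), to the graded pair $(S,I)$ and to $R$ viewed as a finitely generated graded $S$-module (which it is, because $S\to R$ is module-finite by hypothesis). The assertion is then immediate.

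For the first equality, the plan is to decompose the sum defining $f_{R_S,I}$ using the fact that $R_m=0$ unless $n_0\mid m$. Writing $q=p^n$ and unpacking Definition~\ref{d1} for the pair $(S,I)$ with $M=R$, one has
$$f_n(R_S,I)(x)=\frac{1}{q^{d-1}}\sum_{j=0}^{m_0-1}\ell(R/I^{[q]}R)_{\lfloor xq\rfloor m_0+j},$$
and since $R_m$ vanishes outside $n_0\Z$ only the indices $j=kn_0$ with $k=0,1,\ldots,l_0-1$ can contribute. Setting $y_{k,q}=(\lfloor xq\rfloor l_0+k)/q$, one checks directly that $\lfloor y_{k,q}q\rfloor\cdot n_0=\lfloor xq\rfloor m_0+kn_0$, so each surviving summand is exactly $f_n(R,IR)(y_{k,q})$, and $y_{k,q}\to xl_0$ as $q\to\infty$. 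The Main Theorem applied to the pair $(R,IR)$ gives uniform convergence of $\{f_n(R,IR)\}_n$ to the continuous, compactly supported function $f_{R,IR}$, hence each of the $l_0$ summands tends to $f_{R,IR}(xl_0)$ and one obtains $f_{R_S,I}(x)=l_0\,f_{R,IR}(xl_0)$.

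The main technical point to handle carefully is the limit passage at the level of individual summands, since the argument $y_{k,q}$ varies with $q$: a standard three-epsilon estimate combining pointwise convergence $y_{k,q}\to xl_0$, the uniform convergence of $f_n(R,IR)$, and the uniform continuity of $f_{R,IR}$ on its compact support does the job. This is precisely where the continuity of the HK density function furnished by the Main Theorem is indispensable; without it one could not commute the limit with the finite sum in a meaningful way.
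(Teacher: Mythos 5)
Your proposal is correct, but it reaches the first equality by a different route than the paper. For the divisibility $n_0\mid m_0$ you argue directly from $S_n\subseteq R_n$ and the definition of $n_0$ as a gcd, which is simpler than the paper's argument via homogeneous elements of the quotient fields and Lemma~\ref{r2}. For the main identity $f_{R_S,I}(x)=l_0\,f_{R,IR}(xl_0)$, the paper first regrades (replacing $R,S$ by $R^{(n_0)},S^{(n_0)}$ to reduce to $n_0=1$) and then compares $\ell(R/I^{[q]})$ in degrees differing by a bounded shift $l_i\le 2l_0$, using Lemma~\ref{l12} applied to generically isomorphic maps $R(-l_i)\to R(m_1)$ to get an $O(1/q)$ bound, uniform in $x$, so that all $l_0$ summands have the common limit $f_{R,IR}(xl_0)$. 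You instead keep the original grading, note that $R_m=0$ unless $n_0\mid m$ so that only $l_0$ summands survive, recognize each surviving summand as $f_n(R,IR)(y_{k,q})$ with $y_{k,q}=(\lfloor xq\rfloor l_0+k)/q\to xl_0$, and pass to the limit via uniform convergence of $\{f_n(R,IR)\}_n$ together with continuity of $f_{R,IR}$ (both supplied by Theorem~\ref{t1}); the second equality you get, as the paper implicitly does, straight from Theorem~\ref{t1}(2)(a) applied to $R$ as an $S$-module. Your soft argument avoids Lemma~\ref{l12} and the Veronese reduction entirely and is arguably cleaner for the pointwise statement being proved, while the paper's shift-comparison yields an explicit $O(1/q)$ estimate uniform in $x$, which is the kind of quantitative control reused elsewhere (e.g.\ in the proof of Theorem~\ref{t3}); your three-epsilon step is exactly the right care needed since the evaluation points move with $q$.
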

\begin{proof} We first prove that $n_0$ divides $m_0$. Otherwise 
$m_0 = n_0l_0+n_1$, where $0<n_1<n_0$. Now if $x \in Q(S)$ is an homogeneous
element of degree $m_0$ and $y\in Q(R)$ is an homogeneous  element of degree 
$n_0$ then $(x)(y^{-l_0})$ is an homogeneous element of degree $n_1$ in 
$Q(R)$. By Lemma~\ref{r2}, this contradicts the hypothesis that  
$\mbox{gcd} \{n>0\mid R_n\neq 0\} =  n_0$.

Now, replacing  $R$ by $R^{(n_0)}$ and $S$ by $S^{(n_0)}$ we can 
assume $n_0 = 1$ and $m_0 = l_0$.
  
By definition 
$$f_{R_S, I}(x) = \lim_{n\to \infty}  
  \frac{1}{q^{d-1}}\left(\ell(R/I^{[q]}R)_{\lfloor xq\rfloor l_0}+
\cdots +\ell(R/I^{[q]}R)_{\lfloor xq\rfloor l_0+ l_0-1}\right).$$
and  $$f_{R, IR}(xl_0) = 
\lim_{n\to \infty}\frac{1}{q^{d-1}}
\left(\ell(R/I^{[q]}R)_{\lfloor xl_0q\rfloor}\right).$$

For all $x\geq 0$ and $q=p^n$, we have 
$|\lfloor xql_0\rfloor - \lfloor xq\rfloor l_0 |\leq l_0$. Let $m_1$ 
be such that $R_m\neq 0$, for $m\geq m_1$.
 Then for each  $0\leq l_i \leq 2l_0$, we have 
generically isomorphic graded maps 
$R(-l_i) \longto R(m_1)$ and $R \longto R(m_1)$  of degree $0$. Now by
Lemma~\ref{l12}, 
there is a constant $C_{l_0}$ such that 
$$\frac{1}{q^{d-1}}|\ell(R/I^{[q]})_{\lfloor xql_0\rfloor}-
\ell(R/I^{[q]})_{\lfloor xql_0\rfloor+l_i}|\leq C_{l_0},
\quad\mbox{for all}\quad x,~q~\mbox{and}~~0\leq l_i\leq 2l_0$$
which implies 
$$f_{R, I}(xl_0) = \lim_{n\to \infty}\frac{1}{q^{d-1}}\ell(R/I^{[q]})_{\lfloor 
xql_0\rfloor} = 
\lim_{n\to \infty}\frac{1}{q^{d-1}}\ell(R/I^{[q]})_{\lfloor xq\rfloor l_0 + 
l_i}.$$ 
This proves the lemma.
\end{proof}

\section{Proof of the Main Lemma}
Here we prove that the Main technical  Lemma~\ref{ML} which  will complete 
the proof 
of the Main Theorem.

Throughout this section we follow the Notations~\ref{n5}.

As we mentioned earlier, the  sequence~(\ref{e10}) need 
not be locally split exact as the sheaf
 $\sO_{m+q}$ is not invertible in general. 
Hence we consider the following locally split 
(as 
$\sO_{{\lfloor m+q/r\rfloor}r} \simeq \sL^{{\lfloor m+q/r\rfloor}}$ is 
invertible)
 exact sequence of 
$\sO_X$-modules 
 
\begin{equation}\label{e20}
0\longto \sG_{m,q}\longto  \oplus_{i=1}^s \sO_{(m_{q, d_i})}
\longby{\bar {\varphi}_{m, q}}
\sL^{{\lfloor m+q/r\rfloor}}\longto 0,\end{equation}
where
${\bar \varphi}_{m,q}(a_1,\ldots, a_s) = \sum_ia_if_i^q$
and where $\sG_{m,q} = \sF_{{\lfloor m+q/r\rfloor}r-q, q}$ (note however that 
$\sG_{m,q}$ may not be locally free).
In case 
$m+q$ is divisible by $r$, the sequence~(\ref{e20}) is same as the sequence 
$$0\longto \sF_{m,q}\longto  \oplus_{i=1}^s \sO_{m_{q, d_i}}
\longby{ {\varphi}_{m, q}}
\sO_{m+q}\longto 0,$$
as in (\ref{e10}).

In Lemma~\ref{l2}, we show  
that the length of the cohomology of 
$\sF_{m,q}$ (or of $\sO_{m+q}$) differs from the length of the  
cohomology of $\sG_{m,q}$ 
($\sL^{\lfloor m+q/r\rfloor}$ respectively)  by a function of order $O(m+q)^{d-2}$, where 
$\dim~X = d-1$. Hence it will be sufficient  to prove  
the Main Lemma~\ref{ML} for $\sG_{m,q}$ instead of $\sF_{m,q}$.

In the rest of the section we use the following 

\vspace{10pt}

\noindent{\bf Terminology}\quad We fix
a pair 
$(R, I)$, the line bundle ${\sL}$, the integer $r$ along with 
a choice of generators $f_1, \ldots, f_s$ of $I$ as in Notations~\ref{n5}.

Given $L$, where $L$ might be  a number, a set, a map  or 
a coherent sheaf, $C_L$ denotes a constant which depends only on $L$
(with the fixed data $(R, I)$, $\sL$ etc. as above).  
By supp dim $\sF$, we mean the dimension of the support of $\sF$.

Here  supp dim $\sO_n = $ supp dim $X = d-1\geq 1$.

\begin{lemma}\label{l5}For a given coherent sheaf $\sN$ of $\sO_X$-modules
 with  {\em supp dim} $\sN <d-1$, there is a constant $C_{\sN}$ such that
\begin{enumerate}
\item $h^j(X, \sN\tensor \sL^m) \leq C_{\sN}(|m|)^{d-2}$, for every 
$j\geq 0$ and $m\in \Z$.
\item $h^j(X, \sO_m \tensor \sN)) \leq  C_{\sN}(|m|)^{d-2}$, for all 
 $j\geq 0$ and for all $m\in \Z$. 

In particular, for $m\geq 0$ and $0\leq j \leq d-1$,
\item there exists $C$ such that $h^j(X, \sO_{m_q}\tensor \sN) \leq C_{\sN}(m+q)^{d-2}$, 
where, for $1\leq i \leq s$, 
${m_q} = m_{q,d_i}$,  ${m_q} = (m_{q,d_i})$ or ${m_q} =m+q$ (as 
in Notations~\ref{n5}), and 
\item $h^j(X, \sG_{m,q}\tensor \sN)) \leq  C_{\sN}(m+q)^{d-2}$, for all 
$m, j\geq 0$ and $q$.\end{enumerate} \end{lemma}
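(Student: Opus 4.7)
The plan is to prove (1)--(4) in sequence: part (1) is the base case of a line bundle twist; part (2), the main technical step, reduces the reflexive (non-invertible) sheaf case to (1); parts (3) and (4) then follow as immediate corollaries.

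For (1), choose a closed subscheme $Z \subseteq X$ containing $\supp \sN$ with $\dim Z = e \leq d-2$. Then $\sL|_Z$ is ample on $Z$ and, via the projection formula,
$$H^j(X, \sN \otimes \sL^m) \cong H^j(Z, \sN|_Z \otimes (\sL|_Z)^m).$$
On the projective scheme $Z$ of dimension $e$, standard cohomological estimates bound this by a polynomial in $|m|$ of degree at most $e \leq d-2$: for $m \gg 0$ one uses the Hilbert polynomial together with Serre vanishing; for $m \ll 0$ one appeals to Serre duality on $Z$ (or Castelnuovo--Mumford regularity arguments); the finitely many intermediate values of $m$ contribute only a bounded additive constant.

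Part (2) is the main obstacle, as $\sO_m$ is merely reflexive rather than invertible. The clean way to proceed uses Lemma~\ref{r1}(a): $\sO_{rk}$ is a line bundle equal to $\sL^k$ for every $k \in \Z$. Writing $m = rk + l$ with $0 \leq l < r$, both $\sL^k \otimes \sO_l$ and $\sO_m$ are rank-one reflexive sheaves on the normal variety $X$ (the former is reflexive because tensoring a reflexive sheaf with a line bundle preserves reflexivity), and they agree on the regular locus $X^{\reg}$, whose complement has codimension $\geq 2$. By the rigidity of reflexive sheaves across codimension-two subsets on a normal variety, the natural multiplication map gives a global isomorphism $\sO_m \cong \sL^k \otimes \sO_l$. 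Tensoring with $\sN$ yields $\sO_m \otimes \sN \cong \sL^k \otimes (\sO_l \otimes \sN)$. Applying (1) to the coherent sheaf $\sN' := \sO_l \otimes \sN$, which still has support of dimension $\leq d-2$, produces the bound $C_{\sN'} |k|^{d-2} \leq C_{\sN} |m|^{d-2}$; taking the maximum of constants over the $r$ residues $l \in \{0, 1, \ldots, r-1\}$ gives the required uniform constant $C_{\sN}$.

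Part (3) is immediate from (2): in each of the three specified cases one has $|m_q| \leq m + q$, and hence $|m_q|^{d-2} \leq (m+q)^{d-2}$. For (4), the sequence~(\ref{e20}) is locally split exact because $\sL^{\lfloor (m+q)/r \rfloor}$ is invertible, so tensoring with $\sN$ preserves exactness. The associated long exact sequence in cohomology yields
$$h^j(X, \sG_{m,q} \otimes \sN) \leq \sum_{i=1}^{s} h^j(X, \sO_{(m_{q,d_i})} \otimes \sN) + h^{j-1}(X, \sL^{\lfloor (m+q)/r \rfloor} \otimes \sN),$$
and each term on the right is bounded by $C(m+q)^{d-2}$ via parts (1) and (3).
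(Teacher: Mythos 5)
Your proof is correct and follows essentially the same route as the paper's: (1) via Serre vanishing, Hilbert polynomials and duality; (2) by decomposing $\sO_m\cong \sL^{\lfloor m/r\rfloor}\otimes\sO_{r_1}$ with $r_1\in\{0,\dots,r-1\}$ and then invoking (1) for the finitely many sheaves $\sO_{r_1}\otimes\sN$ (the paper obtains this isomorphism directly from Lemma~\ref{r1}(a); your reflexivity/codimension-two argument simply re-proves that lemma); and (3), (4) as corollaries, with (4) using exactly the locally split sequence (\ref{e20}) tensored with $\sN$. The one slip is in (3): $|m_{q,d_i}|=|m+q-qd_i|$ need not be at most $m+q$ once some $d_i\geq 2$ (take $m=0$), but the correct estimate $|m+q-qd_i|\leq d_i(m+q)$ --- which is what the paper uses --- still gives the claim after enlarging the constant by the factor $(\max_i d_i)^{d-2}$.
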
 

\begin{proof} (1)\quad By the Serre vanishing theorem ([H])
 $h^j(X, \sN\tensor \sL^m) = 0$, for $j>0$ and $m>>0$. 
Also, for $m>>0$,  $h^0(X, \sN\tensor \sL^m)$ 
is a polynomial  of degree equal to $\dim \sN < d-1$.
 Hence the assertion~(1) follows by induction on  $\dim \sN$ and 
the  Serre's duality ([H]).
\vspace{5pt}

\noindent{(2)}\quad By Lemma~\ref{r1}, we have $\sO_m = 
\sL^{{\lfloor m/r\rfloor}}\tensor \sO_{r_1}$, where 
$r_1 = m-{\lfloor m/r\rfloor}r <r$.
 Since the support of $\sO_m\tensor \sN = $ the support of $\sN$, the 
assertion (2) follow from the fact that  
$\sO_{r_1}\tensor \sN$ belongs to  the finite set 
$\{\sO_0\tensor \sN, \sO_1\tensor\sN, \ldots, \sO_{r-1}\tensor\sN\}$ of 
coherent sheaves of $\sO_X$-modules. 

The assertion~(3) follows from (2) as  $|m+q-d_iq| \leq d_i(m+q)$.

 Since the sequence~(\ref{e20}) is locally split exact, the induced sequence 
$$0\longto \sG_{m,q}\tensor \sN \longto \oplus_{i=1}^s \sO_{(m_{q, d_i})}
\tensor \sN \longby{{\bar {\varphi}_{m, q}}\tensor \sN} \sL^{{\lfloor m+q/r\rfloor}}\tensor \sN\longto 0 $$
is exact. Now the assertion~(4) follows from (2) and (3).
\end{proof}

\begin{lemma}\label{l8} (1)\quad  
Let 
$S = \{\psi_{j}:\sE_{j}\longto \sF_{j}\mid 1\leq j\leq s\}$ be a finite  
set of  $\sO_X$-linear maps, where 
 $\sE_j$ and $\sF_{j}$ are  
coherent sheaves of $\sO_X$-modules.
For  $m\in \Z$, let  
$${\psi}_j(m) := Id_{\sL^{m}}\tensor \psi_{j}: \sL^{m}\tensor \sE_{j}
\longto  \sL^{m}\tensor \sF_{j}$$
 be the canonically induced maps. Assume that 
{\em supp dim} $(\ker \psi_{j})$ and
supp dim $(\coker \psi_{j})$ are each $< {d-1}$. 
Then there exists a constant $C_S$ such that 
$$h^i(X, \ker {\psi}_{j}(m)) \leq C_S m^{d-2}\quad\mbox{and}\quad
h^i(X, \coker {\psi}^{m}_{j}) \leq C_S{m}^{d-2},\quad
\mbox{for all}\quad i\geq 0.$$

\noindent{(2)}\quad
Moreover if\quad  
$\{0\longto \sN'_{m}\longto \sM'_{m}\longby{\phi_m} 
{\sM}_{m}\longto {\sN}_{m}\longto 0\}_{m\in \Z}$
denote a family of exact sequences of $\sO_X$-modules and $C_1$ and 
$C_2$ are constants such that  
$$h^i(X, \sN'_{m}) \leq C_1({n_m})^{d-2}\quad\mbox{and}\quad 
h^i(X, {\sN}_{m}) \leq 
C_2({n_m})^{d-2},\quad\mbox{for all}\quad i\geq 0,$$
then 
$$|h^0(X, {\sM'}_{m}) - h^0(X, {\sM}_{m}) |\leq (C_1+C_2)({n_m})^{d-2}.$$
\end{lemma}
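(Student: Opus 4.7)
The plan is to reduce both parts to the bounds established in Lemma~\ref{l5}, so the work is mostly cohomological bookkeeping rather than producing new estimates.

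For part (1), the key observation is that $\sL^m$ is a line bundle, hence flat over $\sO_X$, so tensoring by $\sL^m$ is an exact functor on coherent sheaves. Applied to the natural decompositions $0\to \ker \psi_j \to \sE_j \to \im \psi_j \to 0$ and $0 \to \im \psi_j \to \sF_j \to \coker \psi_j \to 0$, this yields canonical identifications $\ker \psi_j(m) \simeq \sL^m\tensor \ker \psi_j$ and $\coker \psi_j(m) \simeq \sL^m \tensor \coker \psi_j$. Since tensoring with a line bundle preserves supports, these twisted sheaves again have support dimension $<d-1$. Applying Lemma~\ref{l5}(1) gives $h^i(X,\sL^m\tensor \ker \psi_j)\leq C_{\ker \psi_j}|m|^{d-2}$ and $h^i(X,\sL^m\tensor \coker \psi_j)\leq C_{\coker \psi_j}|m|^{d-2}$ for all $i\geq 0$. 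As $S$ is finite I take $C_S$ to be the maximum of these finitely many constants.

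For part (2), the plan is to split the four-term exact sequence into two short exact sequences through the image of $\phi_m$:
\begin{equation*}
0 \longto \sN'_m \longto \sM'_m \longto \im \phi_m \longto 0, \qquad 0 \longto \im \phi_m \longto \sM_m \longto \sN_m \longto 0,
\end{equation*}
and then chase the long exact cohomology sequences. The first gives $h^0(\sM'_m) - h^0(\sN'_m) \leq h^0(\im \phi_m) \leq h^0(\sM'_m) - h^0(\sN'_m) + h^1(\sN'_m)$; the second gives $h^0(\sM_m) - h^0(\sN_m) \leq h^0(\im \phi_m) \leq h^0(\sM_m)$. Eliminating $h^0(\im \phi_m)$ between these yields the two inequalities $h^0(\sM'_m) - h^0(\sM_m) \leq h^0(\sN'_m)$ and $h^0(\sM_m) - h^0(\sM'_m) \leq h^1(\sN'_m) + h^0(\sN_m)$. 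Substituting the hypothesised bounds $h^i(\sN'_m)\leq C_1 n_m^{d-2}$ (used for $i=0,1$) and $h^0(\sN_m)\leq C_2 n_m^{d-2}$ gives $|h^0(\sM'_m) - h^0(\sM_m)|\leq (C_1+C_2)n_m^{d-2}$.

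There is no real obstacle: (1) reduces directly to Lemma~\ref{l5}(1) via the exactness of twisting by a line bundle, while (2) is a linear-algebra bookkeeping argument once the four-term sequence is split in two. The only subtle point is that the chase in (2) genuinely requires the bound on $h^1(\sN'_m)$, not only on $h^0$, which is precisely why the hypothesis is formulated uniformly for all $i\geq 0$.
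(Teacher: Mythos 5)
Your proposal is correct and follows the paper's own route: part (1) is proved exactly as in the paper, by identifying $\ker\psi_j(m)\simeq\sL^m\tensor\ker\psi_j$ and $\coker\psi_j(m)\simeq\sL^m\tensor\coker\psi_j$ (exactness of twisting by the line bundle) and invoking Lemma~\ref{l5} over the finite set $S$, and part (2) uses the same splitting of the four-term sequence into two short exact sequences through $\mbox{Im}(\phi_m)$. The only difference is that you spell out the cohomology chase (including the genuinely needed bound on $h^1(\sN'_m)$) which the paper leaves implicit.
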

\begin{proof}We note that, for any $m\in\Z$, 
$$\ker {\psi}_j(m)\simeq 
\sL^m\tensor \ker{\psi}_{j}\quad\mbox{and}\quad 
\coker {\psi}_j(m)\simeq 
\sL^m\tensor \coker {\psi}_j,$$
where $\ker \psi_j$ and $\coker \psi_j$ are  
in a fixed family of finite number of coherent sheaves of $\sO_X$-modules. 
Hence the first assertion follows by Lemma~\ref{l5}.

The second assertion follows by splitting the exact sequence into 
two  canonical two short exact sequences

\begin{equation*}\begin{aligned}0\longto \sN'_m\longto \sM'_{m}\longto 
\mbox{Im}(\phi_m)\longto 0,\\
0 \longto \mbox{Im}(\phi_m)\longto {\sM}_{m}\longto {\sN}_{m}\longto 0.
\end{aligned}\end{equation*}
\end{proof}

\begin{lemma}\label{l2}For all $m$ and  $q = p^n$,
\begin{enumerate} 
\item there is a constant $C$ such that
$$\left|h^0(X, \sG_{m,q}) - h^0(X, \sF_{m, q}) \right| \leq 
C(m+q)^{d-2}.$$

\item For given integer $l_0$, there exists a constant $C_{l_0}$
 such that for every $0\leq l\leq l_0$, 
$$\begin{array}{ccl}
\left|h^0(X, \sG_{m,q}) - h^0(X, \sG_{m+l, q}) \right|
& \leq & C_{l_0}(m+q)^{d-2},\\\\
\left|h^0(X, \sO_{m_q}) - h^0(X, \sO_{{m_q}+l}) \right|
& \leq & C_{l_0}(m+q)^{d-2},
\end{array}$$
where, for $1\leq j\leq s$, ${m_q} = m_{q,d_j}$ or ${m_q} = (m_{q,d_j})$,  
 or $ {m_q} = m+q$.
\end{enumerate}
\end{lemma}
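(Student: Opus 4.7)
The plan is to compare the sheaves in question by multiplying by a nonzero homogeneous element of $R$, and to bound the resulting cokernels using Lemma~\ref{l5}. Since $\gcd\{n > 0 \mid R_n \neq 0\} = 1$, there is $m_0 \geq 1$ with $R_n \neq 0$ for every $n \geq m_0$; moreover $R_r \neq 0$ by Lemma~\ref{r1}(b), so $R_{kr} \neq 0$ for every $k \geq 1$ as well. For each shift $e$ that will be needed (any $e \geq m_0$, or any positive multiple of $r$), fix a nonzero $\eta \in R_e$. Since $R$ is a domain, multiplication by $\eta$ is injective on each $\sO_n$ and induces a morphism from the defining short exact sequence of $\sF_{m,q}$ into that of $\sF_{m+e,q}$; by the snake lemma applied with vanishing kernels, it produces an injection $\sF_{m,q} \hookrightarrow \sF_{m+e,q}$ whose cokernel embeds into the direct sum of term-wise cokernels $\sO_{m+q-qd_i+e}/\eta\,\sO_{m+q-qd_i}$.

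The key technical estimate, and the main obstacle, is the uniform bound
\[ h^j(X,\, \sO_{n+e}/\eta\,\sO_n) \leq C_\eta\,(|n|+1)^{d-2}. \]
Starting from $0 \to \sO_X \xrightarrow{\eta} \sO_e \to \sQ_\eta \to 0$, where $\sQ_\eta := \sO_e/\eta\,\sO_X$ has support of dimension at most $d-2$, I tensor with $\sO_n$. On the smooth locus $U$ of $X$ (whose complement has dimension at most $d-3$ since $X$ is normal), $\sO_n$ is invertible and the natural multiplication $\sO_e \otimes \sO_n \to \sO_{e+n}$ is an isomorphism; hence on $U$ the cokernel of $\eta : \sO_n \to \sO_{n+e}$ coincides with $\sQ_\eta \otimes \sO_n$, and globally the two sheaves differ only by sheaves supported on $X \setminus U$, contributing $O((|n|+1)^{d-3})$ to each cohomology. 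Lemma~\ref{l5}(2), applied with $\sN = \sQ_\eta$, gives $h^j(X, \sQ_\eta \otimes \sO_n) \leq C_{\sQ_\eta}(|n|+1)^{d-2}$, from which the required estimate follows.

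Combining these ingredients: the long exact sequence for $0 \to \sO_{m_q} \xrightarrow{\eta} \sO_{m_q + e} \to \sO_{m_q+e}/\eta\sO_{m_q} \to 0$ yields $|h^0(\sO_{m_q}) - h^0(\sO_{m_q + e})| \leq C\,(m+q)^{d-2}$, and the analogous statement for $\sF$ gives $|h^0(\sF_{m,q}) - h^0(\sF_{m+e,q})| \leq C(m+q)^{d-2}$. For part~(2) applied to $\sO_{m_q}$, small values of $l$ for which $R_l$ may vanish are handled by a two-step comparison through $\sO_{m_q + l + m_0}$ (using nonzero elements of $R_{l+m_0}$ and $R_{m_0}$). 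For part~(2) applied to $\sG_{m,q}$, I use the identification $\sG_{m,q} = \sF_{m'',q}$ with $m'' = \lfloor(m+q)/r\rfloor r - q$, which reduces the statement to bounding $|h^0(\sF_{m'',q}) - h^0(\sF_{(m+l)'',q})|$; the shift $(m+l)'' - m''$ is a non-negative multiple of $r$ (hence admissible). Part~(1) follows the same pattern: $\sG_{m,q} = \sF_{m'',q}$ with $m - m'' = (m+q) \bmod r \in [0, r-1]$, so a direct shift, or a two-step shift through $\sF_{m + m_0, q}$ when $R_{m - m''} = 0$, yields the desired bound.
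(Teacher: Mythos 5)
Your overall strategy --- comparing $\sF_{m,q}$ with $\sF_{m+e,q}$ via multiplication by a nonzero homogeneous $\eta\in R_e$, bounding the resulting cokernels, handling small shifts $l$ with $R_l=0$ by a two-step comparison, and reducing statements about $\sG_{m,q}$ to $\sF_{m'',q}$ --- is the same as the paper's. The genuine gap is in your proof of the key estimate $h^j(X,\sO_{n+e}/\eta\,\sO_n)\le C_\eta(|n|+1)^{d-2}$, namely in the claim that on the smooth locus $U$ the multiplication map $\sO_e\tensor\sO_n\to\sO_{e+n}$ is an isomorphism, so that $\coker(\eta\colon\sO_n\to\sO_{n+e})$ agrees with $\sQ_\eta\tensor\sO_n$ up to sheaves supported on $X\setminus U$, of dimension at most $d-3$. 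This is false for $\Q$-divisors: writing $D=\sum_i a_iD_i$, one has $\lfloor na_i\rfloor+\lfloor ea_i\rfloor\le\lfloor (n+e)a_i\rfloor$, with strict inequality possible whenever $a_i\notin\Z$, so even where $X$ is smooth the multiplication map is only injective, with cokernel supported on the prime divisors carrying the fractional part of $D$ --- a codimension-one, hence $(d-2)$-dimensional, locus unrelated to $\Sing X$. (Already for $X=\P^1$, $D=\frac{1}{2}P$, $e=1$: $\eta$ is a nonzero constant, $\sQ_\eta=0$, yet $\coker(\eta\colon\sO_1\to\sO_2)=\coker(\sO_{\P^1}\to\sO_{\P^1}(P))\neq 0$.) Consequently the discrepancy sheaves you discard contribute terms of order $(|n|+1)^{d-2}$ --- the same order as the bound you are proving, not $(|n|+1)^{d-3}$ --- and your argument does not control them; this rounding phenomenon is exactly the difficulty that separates the graded case from the standard graded one.

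The estimate itself is true, and the paper establishes it without ever comparing $\coker(\eta)$ to $\sQ_\eta\tensor\sO_n$: since $\sO_n=\sL^{\lfloor n/r\rfloor}\tensor\sO_{n-\lfloor n/r\rfloor r}$ with $\sL$ invertible, multiplication by $\eta$ is $\mathrm{Id}_{\sL^{\lfloor n/r\rfloor}}\tensor\phi$, where $\phi$ is multiplication by $\eta$ between twists $\sO_j$ with $j$ in a bounded range; thus $\phi$ ranges over a \emph{finite} family of maps whose cokernels have support dimension $<d-1$, the cokernel of $\eta$ is $\sL^{\lfloor n/r\rfloor}\tensor\coker\phi$, and Lemmas~\ref{l5} and \ref{l8} give the $O((m+q)^{d-2})$ bound uniformly. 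If you replace your smooth-locus identification by this finite-family decomposition (or otherwise bound the rounding quotients $\sO_X(\lfloor(n+e)D\rfloor)/\sO_X(\lfloor nD\rfloor+\lfloor eD\rfloor)$, which again requires such a device), the remainder of your proposal --- the snake-lemma embedding of the cokernel into the term-wise cokernels, the two-step shifts, and the identification $\sG_{m,q}=\sF_{m'',q}$ --- goes through and coincides with the paper's argument.
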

\begin{proof}{\bf Claim}~(A).\quad For a given ${\tilde r}\in \Z$, if 
 $H^0(X, \sO_{\tilde r})\neq  \{0\}$ then  
there exists a constant $C_{\tilde r}$ such that, for $i\geq 0$ and $m\geq 0$
$$|h^0(X, \sF_{m,q})- h^0(X, \sF_{m+{\tilde r},q})|
\leq C_{\tilde r} (m+q)^{d-2}.$$

\noindent{\underline{Proof of the claim}}:\quad
An element $h\in H^0(X, \sO_{\tilde r})\setminus \{0\}$ gives an 
injective map
$\Phi_h:\sO_m\longto \sO_{m+{\tilde r}}$, for all $m$.
In particular we have the following canonical diagram of sheaves of $\sO_X$-modules:

$$\begin{array}{ccccccccc}

0 & \longto &  \sF_{m+{\tilde r},q} & \longto &  \oplus_{i=1}^s 
\sO_{m+q+{\tilde r}-qd_i} &
\longby{ {\varphi}_{m+{\tilde r}, q}} & 
\sO_{m+{\tilde r}+q} & \longto & 0\\

& & \uparrow^{\Phi'_h} & & \uparrow^{\oplus_i\Phi_h} & & \uparrow^{\Phi_h} & &{}\\

0 & \longto & \sF_{m,q} & \longto &  \oplus_{i=1}^s \sO_{m+q-qd_i}&
\longby{\varphi_{m, q}}& \sO_{m+q}&
\longto & 0\\
& & \uparrow  & & \uparrow & & \uparrow & &{}\\
& & 0 & & 0 & & 0 & &{}

\end{array}$$
The  map $\oplus_i\Phi_h:\oplus_i\sO_{m+q-qd_i}\longto 
\oplus_i\sO_{m+{\tilde r}+q-qd_i}$
is same as 
$$ \oplus_i (\mbox{Id}_{\sL}\tensor \phi_{i}): 
\oplus_i(\sL^{{\lfloor m_{q,d_i}/r\rfloor}}\tensor 
\sE_{i}) \longto  
\oplus_i(\sL^{{\lfloor m_{q,d_i}/r\rfloor}}\tensor 
\sF_{i} )$$
and  where 
$${\sE}_{i} = \sO_{{m+q-qd_i}-\lfloor m_{q,d_i}/r\rfloor r}\quad{and}\quad 
{\sF}_{i} = \sO_{{m+{\tilde r}+q-qd_i}-\lfloor m_{q,d_i}/r\rfloor r}$$
and the map $\phi_{i}:\sE_{i} \longto   
\sF_{i}$ is the multiplication map by  $h$.

Also the map $\Phi_h:\sO_{m+q}\longto \sO_{m+{\tilde r}+q}$ is 
$$ \mbox{Id}_{\sL}\tensor \phi_{0}: 
\sL^{{\lfloor m+q/r\rfloor}}\tensor 
\sE_{0} \longto  
\sL^{{\lfloor m+q/r\rfloor}}\tensor 
\sF_{0},$$
 where 
$${\sE}_{0} = \sO_{{m+q}-\lfloor m+q/r\rfloor r}\quad\mbox{and}\quad
 {\sF}_{0} = \sO_{{m+{\tilde r}+q}-\lfloor m+q/r\rfloor r}$$
and the map $\phi_{0}:\sE_{0} \longto   
\sF_{0}$ is given by the multiplication by $h$.
Note that  $\sE_{i}\in \{\sO_0,\ldots, \sO_{r-1}\}$ and 
$\sF_{i}\in \{\sO_{\tilde r},
\ldots, \sO_{{\tilde r}+r-1}\}$ and supp dim $(\coker~\phi_{i}) < d-1$. Hence 
the claim follows by  Lemmas~\ref{l5} and \ref{l8} and  the short exact sequence
$$0\longto \coker~\Phi'_h \longto \coker~(\oplus_i^s\Phi_h)\longto 
\coker~\Phi_h\longto 0.$$

\vspace{5pt}

\noindent{\underline{Assertion}~(2)}.
It is enough to prove the Assertion~(2) for $\sF_{m,q}$ instead of 
$\sG_{m,q}$.
Since there exists  $x_1\in H^0(X, \sO_{2r})\setminus \{0\}$, the 
above claim implies that 
we have 
a constant $C_{2r}$ such that
$$|h^0(X, \sF_{m,q})- h^0(X, \sF_{m+{2r},q})|
\leq C_{2r} (m+q)^{d-2},\quad\mbox{for}\quad i\geq 0.$$

\vspace{10pt}

\noindent{{\underline{Case}}}~1.\quad If $l\leq r$ then there exists 
 $x_2\in H^0(X, \sO_{2r-l})
\setminus \{0\}$, and therefore we have 
a constant $C_{2r-l}$ such that, for $i\geq 0$,
$$|h^0(X, \sF_{m+l,q})- h^0(X, \sF_{m+2r,q})|
\leq C_{2r-l} (m+q)^{d-2}.$$

\noindent{{\underline{Case}}}~2.\quad If $l\geq r$ then we can choose 
$x_3\in H^0(X, \sO_{l})\setminus \{0\}$ and 
therefore get  
a constant $C_{l}$ such that
$$|h^0(X, \sF_{m,q})- h^0(X, \sF_{m+l,q})|
\leq C_{l} (m+q)^{d-2},$$ for $i\geq 0$.
Since, for given $0\leq l\leq l_0$, there are finitely many choices of 
such $C_{l}$, we get Assertion~(2) of the lemma.
 
Similarly we prove the lemma  for  $\sO_{m_q}$. 

\vspace{5pt}

\noindent{\underline{Assertion}~(1)}.\quad It follows from the proof of 
Assertion~(2).\end{proof}

\subsection{The Main Lemma for $\sG_{m,q}$} 
Here we compare $h^0(X, \sG_{mp, qp})$ ($h^0(X, (\sO_{m_qp})$) 
with  $h^0(X, \oplus^{p^{d-1}}\sG_{m,q})$ 
($h^0(X, \oplus^{p^{d-1}}\sO_{m_q})$ respectively)
 in Lemma~\ref{l3} and in Lemma~\ref{l7}. Since the sequence~(\ref{e20}) is 
locally split exact, it remains exact for the functor $(-)\tensor \sM$, 
 for any sheaf of $\sO_X$-modules $\sM$.
In particular we construct below a generically isomorphic map 
$F^*\sG_{m,q}\longto \sG_{m', qp}$, provided  $|mp-m'|$ bounded 
by constant for all $m$ and $m'$.

\begin{lemma}\label{l3}
There is a constant $C_0$ such that
$$\begin{array}{ccc}
\left|h^0(X, (F^*\sG_{m,q})) - h^0(X, \sG_{mp, qp}) \right| 
& \leq  & C_0(mp+qp)^{d-2},\\\\
\left|h^0(X, (F^*\sO_{m_q})) - h^0(X, \sO_{{m_q}p}) \right| 
 & \leq  & C_0(mp+qp)^{d-2},
\end{array} $$
where ${m_q} = m_{q,d_j}$,  ${m_q} = (m_{q,d_j})$, for $1\leq j\leq s$,  or ${m_q} =m+q$ and where
$m\geq 0$ and $q = p^n$.
\end{lemma}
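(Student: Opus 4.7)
The plan is to apply the Frobenius pullback $F^*$ to the locally split exact sequence~(\ref{e20}) and then compare the resulting sequence termwise with the sequence~(\ref{e20}) at parameters $(mp, qp)$.

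I first handle the estimate on $\sO_{m_q}$. By Lemma~\ref{r1}(a), write $m_q = k_0 r + r_0$ with $0 \le r_0 < r$, so $\sO_{m_q} \simeq \sL^{k_0} \otimes \sO_{r_0}$, and hence
\[
F^*\sO_{m_q} \simeq \sL^{k_0 p} \otimes F^*\sO_{r_0}, \qquad \sO_{m_q p} \simeq \sL^{k_0 p} \otimes \sO_{r_0 p}.
\]
Since $\sO_{r_0}$ is invertible on the regular locus $X_{\mathrm{reg}}$ and Frobenius pulls back a line bundle to its $p$-th tensor power, the natural map to the reflexive hull $F^*\sO_{r_0} \to (F^*\sO_{r_0})^{\vee\vee} \simeq \sO_{r_0 p}$ is an isomorphism on $X_{\mathrm{reg}}$. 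Normality of $X$ gives $\mathrm{codim}(X_{\mathrm{sing}}, X) \ge 2$, so the kernel and cokernel of this map have support of dimension $\le d-3$. Since $r_0$ lies in the finite set $\{0,1,\ldots,r-1\}$, Lemma~\ref{l5}(1) applied to these fixed sheaves tensored with $\sL^{k_0 p}$ gives $|h^0(X, F^*\sO_{m_q}) - h^0(X, \sO_{m_q p})| = O((m+q)^{d-3})$, which implies the second estimate.

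For the first estimate, since the rightmost term $\sL^{\lfloor (m+q)/r\rfloor}$ of (\ref{e20}) is invertible, the sequence is locally split exact, so applying $F^*$ preserves exactness and yields
\[
0 \longrightarrow F^*\sG_{m,q} \longrightarrow \bigoplus_i F^*\sO_{(m_{q,d_i})} \longrightarrow \sL^{p\lfloor (m+q)/r\rfloor} \longrightarrow 0.
\]
I would then compare this termwise with sequence~(\ref{e20}) for $(mp, qp)$. A direct computation bounds $|p\lfloor (m+q)/r\rfloor - \lfloor (mp+qp)/r\rfloor|$ and $|(m_{q,d_i})p - (m_{qp,d_i})|$ by constants depending only on $p$ and $r$. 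Hence each pair of corresponding terms is linked by a natural map, built as a composition of (i) the reflexive-hull Frobenius comparison from the first paragraph, and (ii) a bounded-shift multiplication-by-section map as in the proof of Lemma~\ref{l2}. Combining Lemmas~\ref{l5} and~\ref{l2}(2), the kernel and cokernel of each such map have $h^0$ and $h^1$ bounded by $O((mp+qp)^{d-2})$.

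Finally, inserting these termwise comparison maps into a commutative diagram between the two short exact sequences and applying the snake lemma together with Lemma~\ref{l8}(2), the $O((mp+qp)^{d-2})$ bounds on the middle and right columns transfer to the leftmost column, yielding the desired bound $|h^0(X, F^*\sG_{m,q}) - h^0(X, \sG_{mp,qp})| = O((mp+qp)^{d-2})$. The main obstacle is that $F^*\sO_n$ and $\sO_{np}$ differ off the regular locus; however, normality of $X$ (so that $X_{\mathrm{sing}}$ has codimension $\ge 2$) keeps the Frobenius error one order below $(mp+qp)^{d-2}$, after which the residual bounded-shift error is absorbed by Lemma~\ref{l2}(2).
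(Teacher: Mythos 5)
Your overall route (apply $F^*$ to the locally split sequence~(\ref{e20}), compare it termwise with the sequence at $(mp,qp)$ using bounded shifts as in Lemma~\ref{l2}, and transfer the bounds to the kernels via Lemma~\ref{l8}) is the paper's route, but the step on which everything rests is wrong as you state it. You claim that $(F^*\sO_{r_0})^{\vee\vee}\simeq\sO_{r_0p}$ and that the natural map $F^*\sO_{r_0}\to\sO_{r_0p}$ is an isomorphism on $X_{\reg}$, so that its kernel and cokernel are supported in dimension $\le d-3$ and the Frobenius error is $O((m+q)^{d-3})$. This fails precisely in the situation this paper treats, namely when $D=\sum_ia_iD_i$ has genuinely fractional coefficients. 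On $X_{\reg}$ one has $F^*\sO_{r_0}\simeq\sO_X(p\lfloor r_0D\rfloor)$, whereas $\sO_{r_0p}=\sO_X(\lfloor pr_0D\rfloor)$, and $\lfloor pr_0D\rfloor-p\lfloor r_0D\rfloor$ is in general a nonzero effective divisor supported on the $D_i$ (for instance $a_i=1/p$ gives $p\lfloor r_0D\rfloor=0$ but $\lfloor pr_0D\rfloor = r_0D_i$ for $0<r_0<p$). Hence $(F^*\sO_{r_0})^{\vee\vee}\simeq\sO_X(p\lfloor r_0D\rfloor)$, not $\sO_{r_0p}$, and the comparison map fails to be an isomorphism along codimension-one subsets even when $X$ is smooth; its cokernel has support of dimension $d-2$, so the asserted $O((m+q)^{d-3})$ bound is false and normality of $X$ is not what controls this error.

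The correct treatment, which is the paper's, is a two-step map: first $\phi_n\colon F^*\sO_n\to\sO_X(p\lfloor nD\rfloor)$, which is an isomorphism on $X_{\reg}$ because $\sO_n$ is invertible there, and then the inclusion $\sO_X(p\lfloor nD\rfloor)\to\sO_{np}$, whose cokernel is divisorial but, after factoring out the power of $\sL$ exactly as you do, lies in a fixed finite family of sheaves (indexed by $0\le r_0<r$, with $p$ fixed) of support dimension $<d-1$. Lemmas~\ref{l5} and~\ref{l8} then give a bound of order $(mp+qp)^{d-2}$ for both the $\sO_{m_q}$-comparison and, via the commutative diagram of short exact sequences and the induced generically isomorphic map $F^*\sG_{m,q}\to\sG_{m',qp}$ with $m'=\lfloor (m+q)/r\rfloor rp-qp$ a bounded shift of $mp$, for the $\sG$-comparison. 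This is the order actually required in the statement, not one order lower as your argument asserts; once the Frobenius comparison is repaired in this way, the remainder of your termwise diagram argument coincides with the paper's proof.
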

\begin{proof}{\bf Claim}.\quad For given $n$ there is a generically isomorphic map
 $\psi_n:F^*\sO_n \longto \sO_{np}$.

\vspace{5pt}

\noindent{\underline{Proof of the claim}}:\quad
By notation $\sO_n = \sO_X(nD)$, where $D$ is a $\Q$-Weil divisor. 
Let $D = \sum a_iD_i$, where $a_i\in \Q$ and $D_i$ are prime divisors.
Then 
$$\lfloor npD\rfloor = \sum_i\lfloor a_in\rfloor pD + \sum_i m_iD_i = 
p\lfloor nD\rfloor + \sum_i m_iD_i,$$
where $0\leq m_i\leq p$ are integers.
Let $\sM = \sO_X(p\lfloor nD\rfloor)$ then $\sM \longby{\bar f_n} \sO_{np}$ 
is an inclusion such that  supp dim $\coker {\bar f_n} < d-1$.

On the other hand, we can define the map 
${\phi_n}:F^*{\sO_n} \longto \sM$ as follows:
For the Frobenius map  $F:X_1\longto X$ let $F^*\sO_n = F^{-1}\sO_n
\tensor_{F^{-1}\sO_X}\sO_{X_1}$ and  
  let $\{D_+(f)\}_f$ denote
the affine open cover  of $X$, where 
 $f\in R$ is an homogeneous element of $R$. 
Then
the map $\phi_n\mid_{D_+(f)}$ is given by 
$$v/f^j\tensor u/f^i \to (v/f^j)^p\cdot u/f^i, \quad\mbox{if}\quad 
v/f^j \in F^{-1}\sO_n\quad\mbox{and}\quad u/f^i\in \sO_{X_1}.$$

The map $\phi_n$ is isomorphism on the regular locus $X_{reg}$ of $X$ as 
 $\sO_n\mid_{X_{reg}}$ is invertible. 
In particular $\psi_n = {\bar f_n}\cdot \phi_n$ is generically an isomorphism.
This proves the claim.

Now  the map $\psi = \oplus_i\psi_{(m_{q, d_i})}:\oplus_iF^*\sO_{(m_{q, d_i})}\longto 
\oplus_i\sO_{(m_{q, d_i})p}$ is generically an isomorphism, and 
$\phi$ is an isomorphism such that $\phi\circ F^*\phi_{m,q} = 
{\bar \phi}_{m', qp}\circ \phi$. This gives us  a map 
$F^*\sG_{m,q}\longto \sG_{m', qp}$ such that 
 the following diagram commutes 

$$\begin{array}{ccccccccc}

0 & \longto &  \sG_{m',qp} & \longto &  \oplus_{i=1}^s \sO_{(m_{q, d_i})p} &
\longby{\bar {\varphi}_{m', qp}} & 
\sL^{{\lfloor m+q/r\rfloor}p} & \longto & 0\\

& & \uparrow^{f_{m,q}} & & \uparrow^{\psi} & & \uparrow^{\phi} & &{}\\

0 & \longto & F^*\sG_{m,q} & \longto &  \oplus_{i=1}^s F^*\sO_{(m_{q, d_i})}&
\longby{F^*\varphi_{m, q}}& F^*\sL^{{\lfloor m+q/r\rfloor}}&
\longto & 0,
\end{array}$$

\vspace{10pt}

where  
$m'= \lfloor (m+q)/r\rfloor rp-qp$.  Therefore $m' = mp-r_1p$, for some 
$0\leq r_1<r$.
Note that 
the map $\psi_{(m_{q, d_i})}: F^*\sO_{(m_q,d_i)} \longto 
\sO_{(m_q,d_i)p}$
 is the same as the map
$$\psi_{{\lfloor m_{q,d_i}/r\rfloor}r}\tensor 
\psi_{{i_{j}}}:
F^*\sL^{{\lfloor m_{q,d_i}/r\rfloor}}\tensor F^*\sO_{i_j} 
\longto \sL^{{\lfloor m_{q,d_i}/r\rfloor}p}\tensor \sO_{{i_j}p},$$
where   the map
$\psi_{{\lfloor m_{q,d_i}/r\rfloor}r}:
F^*\sL^{{\lfloor m_{q,d_i}/r\rfloor}}\longto 
\sL^{{\lfloor m_{q,d_i}/r\rfloor}p}$ is an isomorphism
and the generically isomorphic map
$\psi_{i_j} \in  \{\psi_j:F^*\sO_j\longto \sO_{jp} \mid 
-r\leq j\leq r\}$.

Similarly for any $m\in \Z$, the map 
$\psi_m:F^*\sO_m\longto \sO_{mp}$ is same as the map
$$\psi_{\lfloor m/r\rfloor}\tensor \psi_{i}:
F^*\sL^{\lfloor m/r\rfloor}\tensor F^*\sO_i \longto 
\sL^{\lfloor m/r\rfloor p}\tensor \sO(ip),\quad\mbox{where}\quad 0\leq 
i<r$$
and where $\psi_{\lfloor m/r\rfloor}$ is an isomorphism.
Since the map $\phi$ is an isomorphism, we have $\ker~f_{m,q} = \ker~\psi$ and
$\coker~f_{m,q} = \coker~\psi$ and each have supp dim 
$ < d-1$. Hence
 the lemma follows by Lemma~\ref{l8}.
\end{proof}

\begin{lemma}\label{l7}There is a constant $C_1$ such that
$$\begin{array}{ccc}|p^{d-1}h^0(X,\sG_{m,q}) - 
h^0(X, F^*\sG_{m,q})| & \leq  & C_1(mp+qp)^{d-2}\\\\ 
|p^{d-1}h^0(X, \sO_{m_q}) - 
h^0(X, F^*\sO_{m_q})| &\leq  & C_1(mp+qp)^{d-2}, 
\end{array}$$ 
where ${m_q} = m_{q,d_j}$,  ${m_q} = (m_{q,d_j})$, for $1\leq j\leq s$,  or ${m_q} = m+q$, and where $m\geq 0$ and 
$q=p^n$.
\end{lemma}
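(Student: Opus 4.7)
The plan is to establish both estimates through a common strategy that isolates the Frobenius scaling behavior on line bundles, extends it to the reflexive sheaves $\sO_{m_q}$ using a finite-family decomposition, and finally transfers it to $\sG_{m,q}$ via the locally split short exact sequence~(\ref{e20}).

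First I would handle the $\sO_{m_q}$ estimate (which includes the pure line-bundle case). By Lemma~\ref{r1}, one can write $\sO_{m_q}\simeq \sL^a\otimes\sN_i$, where $a=\lfloor m_q/r\rfloor$ and $\sN_i$ belongs to the \emph{finite} family $\{\sO_0,\sO_1,\ldots,\sO_{r-1}\}$, determined by the residue of $m_q$ modulo $r$. Then $F^*\sO_{m_q}=\sL^{ap}\otimes F^*\sN_i$. By Serre vanishing, uniform over the finite families $\{\sN_i\}$ and $\{F^*\sN_i\}$, for all sufficiently large $a$ we have $h^0=\chi$ on both sides. The Hilbert polynomials $P_i(a):=\chi(X,\sL^a\otimes\sN_i)$ and $Q_i(a):=\chi(X,\sL^a\otimes F^*\sN_i)$ are polynomials of degree $d-1$ in $a$, and their leading coefficients coincide because $\sN_i$ and $F^*\sN_i$ share the same generic rank $1$ (Frobenius preserves generic fibre dimension over a perfect base field). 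Hence $Q_i(ap)-p^{d-1}P_i(a)$ is a polynomial of degree at most $d-2$ in $a$, giving the required bound by $C(m+q)^{d-2}$; small-$a$ boundary terms contribute only a bounded constant that can be absorbed into $C_1$.

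Next I would treat $\sG_{m,q}$. Applying $F^*$ to~(\ref{e20}) preserves exactness (since $\sL^{\lfloor(m+q)/r\rfloor}$ is a line bundle, so the sequence is locally split). By additivity of Euler characteristic over the two short exact sequences,
\[\chi(X,F^*\sG_{m,q})-p^{d-1}\chi(X,\sG_{m,q})=\sum_i\bigl[\chi(X,F^*\sO_{(m_{q,d_i})})-p^{d-1}\chi(X,\sO_{(m_{q,d_i})})\bigr]-\bigl[\chi(X,\sL^{\lfloor(m+q)/r\rfloor p})-p^{d-1}\chi(X,\sL^{\lfloor(m+q)/r\rfloor})\bigr].\]
Each bracketed term is bounded by $C(m+q)^{d-2}$ by the previous step, yielding $|\chi(X,F^*\sG_{m,q})-p^{d-1}\chi(X,\sG_{m,q})|\leq C'(m+q)^{d-2}$.

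The hard part will be upgrading this $\chi$-estimate to an $h^0$-estimate. For $m+q$ sufficiently large, Serre vanishing on the terms of~(\ref{e20}) combined with the long exact sequence in cohomology forces $h^i(\sG_{m,q})=0$ for $i\geq 2$, and likewise for $F^*\sG_{m,q}$; thus $\chi=h^0-h^1$ in both cases, and it remains to bound $|p^{d-1}h^1(\sG_{m,q})-h^1(F^*\sG_{m,q})|$ by $C(m+q)^{d-2}$. I would approach this via Serre duality on the normal projective variety $X$: $h^1$ becomes $h^{d-2}$ of the Serre dual, and dualizing~(\ref{e20}) (which remains locally split) produces an analogous short exact sequence for $\sG_{m,q}^\vee\otimes\omega_X$ to which the same Hilbert-polynomial analysis applies once the matching of generic ranks is verified. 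The discrepancy between $F^*(\sG_{m,q}^\vee\otimes\omega_X)$ and $(F^*\sG_{m,q})^\vee\otimes\omega_X$, arising from the failure of $F^*$ to commute with dualization, is generically trivial and supported on $X_{\rm sing}$, which has dimension $\leq d-2$, so its cohomological contribution is controlled by Lemma~\ref{l5}.
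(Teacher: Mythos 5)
Your first step (the estimate for $\sO_{m_q}$ via $\sO_{m_q}\simeq\sL^{a}\tensor\sN_i$ with $\sN_i$ in the finite family $\{\sO_0,\ldots,\sO_{r-1}\}$, comparing the Snapper polynomials of $\sN_i$ and $F^*\sN_i$, whose leading coefficients agree since both have generic rank one) is essentially sound, modulo some care in the intermediate and very negative ranges of $a$ and the possible torsion of $F^*\sN_i$ on $X_{\sing}$, which Lemma~\ref{l5}-type bounds do absorb. The genuine gap is in the passage from the Euler-characteristic estimate to the $h^0$ estimate for $\sG_{m,q}$. You claim that for $m+q$ large, Serre vanishing applied to the terms of (\ref{e20}) forces $h^i(X,\sG_{m,q})=0$ for $i\geq 2$, so that $\chi=h^0-h^1$. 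This fails: the middle term of (\ref{e20}) is $\oplus_i\sO_{(m_{q,d_i})}$ with $(m_{q,d_i})=\lfloor (m+q)/r\rfloor r-qd_i$, which is \emph{negative of order} $-q(d_i-1)$ whenever $m+q<qd_i$, i.e. precisely in the range $1\le x<d_i$ where the density function is nontrivial and which cannot be excluded. For such twists $h^{d-1}(X,\sO_{(m_{q,d_i})})$ grows like $(m+q)^{d-1}$ (Serre duality), and since the long exact sequence identifies $H^i(\sG_{m,q})$ with $\oplus_jH^i(\sO_{(m_{q,d_j})})$ for $i\ge 2$ once $\lfloor(m+q)/r\rfloor\gg0$, the higher cohomology of $\sG_{m,q}$ (and of $F^*\sG_{m,q}$) is of full order $(m+q)^{d-1}$ when $d\ge 3$. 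Hence $\chi(\sG_{m,q})$ differs from $h^0-h^1$ by terms you have not controlled, and your reduction to bounding $|p^{d-1}h^1(\sG_{m,q})-h^1(F^*\sG_{m,q})|$ does not recover the lemma; you would in fact need the analogous estimate for every $h^i$, $i\ge1$, which is as hard as the original statement, and the sketched Serre-duality-plus-dualization argument (with the failure of $F^*$ to commute with duals relegated to $X_{\sing}$) is not enough to supply it.

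This is also why the paper's proof never passes through $\chi$: it compares $h^0$ directly, using the short exact sequence $0\to\oplus^{p^{d-1}}\sO_{-m_2}\to F_*\sO_X\to Q''\to0$ (with $\dim\supp Q''<d-1$, from Lemma~2.9 of [T]), tensoring the locally split sequence (\ref{e20}) with these sheaves, and converting $h^0(X,\sG_{m,q}\tensor F_*\sO_X)$ into $h^0(X,F^*\sG_{m,q})$ by the projection formula; all error terms are then sections of sheaves supported in dimension $<d-1$ and are bounded by Lemmas~\ref{l5} and \ref{l8}. If you want to salvage your approach, you would have to replace the vanishing claim by a mechanism of this kind that controls \emph{all} higher cohomology differences, at which point you are essentially reconstructing the paper's argument.
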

\begin{proof} Recall $X= \mbox{Proj}~R = \mbox{Proj}~R^{r}$, where $R^{r}$
is  a standard graded domain. Therefore 
by Lemma~2.9 in [T], there is an integer 
$m_2\in \N$ (it will be a multiple of $r$) such that we have 
 a short exact sequence of sheaves of $\sO$-modules 
\begin{equation}\label{*}0\longto \oplus^{p^{d-1}}\sO_X({-m_2}D)\longby{\eta} 
F_*\sO_X\longto Q''\longto 0,\end{equation}
where support dimension $Q''$ is $<d-1$.

\vspace{10pt}
Let $M_1 = \oplus^{p^{d-1}}\sO_{-m_2}$ and $M = F_*\sO_X$.
Then  the short exact sequences 
$0\longto M_1\longby{\eta} M \longto Q''\longto 0$ and (\ref{e20}))
give the  following commutative diagram of canonical maps 
 
$$\begin{array}{ccccccccc}
& & 0 & & 0 & & 0 & &{}\\

& & \uparrow & & \uparrow & & \uparrow & &{}\\

0 & \longto &  \sG_{m,q}\tensor Q'' & \longto &  
\displaystyle{\oplus_{i=1}^s \sO_{(m_{q,d_i})}}\tensor Q'' & \longto  & 
\sL^{{\lfloor m+q/r\rfloor}}\tensor Q'' & \longto & 0\\

& & \uparrow^{h_{\sG_{m,q}}} & & \uparrow^{h_{\sL_{m,q}}} & & \uparrow & &{}\\

0 & \longto &  \sG_{m,q}\tensor M & \longto &  
\displaystyle{\oplus_{i=1}^s \sO_{(m_{q,d_i})}}\tensor M &
\longby{\bar {\varphi}_{m, q}} & 
\sL^{{\lfloor m+q/r\rfloor}}\tensor M & \longto & 0\\

& & \uparrow^{f_{\sG_{m,q}}} & & \uparrow^{f_{\sL_{m,q}}} & & \uparrow & &{}\\

0 & \longto & \sG_{m,q}\tensor M_1 & \longto & 
\displaystyle{\oplus_{i=1}^s \sO_{(m_{q,d_i})}}\tensor M_1 &
\longby{\varphi_{m, q}}& \sL^{{\lfloor m+q/r\rfloor}}\tensor M_1&
\longto & 0\\
& & \uparrow & & \uparrow & & \uparrow & &{}\\
& & \ker(f_{\sG_{m,q}}) & =   &  \ker(f_{\sL_{m,q}})& & 0, & &\\
& & \uparrow & &    \uparrow & & & &\\
& &  0 & & 0  & &  & &{}
\end{array}$$

Since $\sO_{(m_{q,d_i})} = \sL^{{\lfloor m_{q,d_i}/r\rfloor}}\tensor 
\sE_{i}$,  
where $\sE_{i} = \sO_{(m_{q, d_i})-{\lfloor m_{q,d_i}/r\rfloor}r} 
\in \{\sO_0, \sO_1, \ldots, \sO_{2r}\}$
the map $f_{\sL_{m,q}}$  is same as the map
$$\oplus_{i=1}^s \mbox{Id}_{\sL}\tensor \psi_{i}:
\oplus_{i=1}^s\sL^{{\lfloor m_{q,d_i}/r\rfloor}}\tensor \sE_{i}\tensor M_1 
\longto \oplus_{i=1}^s\sL^{{\lfloor m_{q,d_i}/r\rfloor}}\tensor 
\sE_{i} \tensor M,$$
where the map $\psi_{i} = Id_{\sE_{i}}\tensor \eta: \sE_{i} 
\tensor M_1\longto \sE_{i}\tensor M$  is generically isomorphic.
Therefore 
$$\mbox{supp dim of} \left(\ker(f_{\sG_{m,q}}) = \ker(f_{\sL_{m,q}}) = 
\oplus_{i=1}^s(\sL^{{\lfloor m_{q,d_i}/r\rfloor}}\tensor 
\ker~\psi_{i})\right) < d-1.$$
Now the long exact sequence 
$$0\longto \ker(f_{\sG_{m,q}}) \longto \sG_{m,q}\tensor M_1
\longto \sG_{m,q}\tensor M\longto \sG_{m,q}\tensor Q''\longto 0$$
gives (Lemma~\ref{l8}~(2))
\begin{equation}\label{e2}|h^0(X, \sG_{m,q}\tensor M_1) - h^0(\sG_{m,q}\tensor
F_*\sO_X)| = C_{\eta}(m+q)^{d-2},\end{equation}
 for some constant
$C_{\eta}$. 
On the other hand, as $F$ is a finite map, for any coherent sheaf $M$ of 
$\sO_X$-modules
the projection formula 
$F_*(F^*\sG_{m,q}\tensor M) = \sG_{m,q}\tensor F_*M$ holds.

This implies 
\begin{equation}\label{e3}
h^0(X, \sG_{m,q}\tensor F_*\sO_X) = h^0(X, F_*(F^*\sG_{m,q})) = 
h^i(X, F^*\sG_{m,q})\end{equation}
Now the lemma follows by (\ref{e2}) and (\ref{e3}).

The second assertion follows by the same line of arguments.
\end{proof}

\vspace{10pt}

\noindent{\underline {Proof of Main Lemma}~\ref{ML}}\quad 
It follows from  Lemma~\ref{l3}, Lemma~\ref{l7} and Lemma~\ref{l2}~(1).
$\Box$

\section{HK density functions for segre products of graded rings}
Here we show that the HK density function is multiplicative.
Let  $(R, I)$ and $(S, J)$ be two pairs, where  
$R = \oplus_{n\geq 0}R_n$ and $S =\oplus_{n\geq 0} S_n$ are graded domains  
of dimension $d_1\geq 2$ and $d_2\geq 2$
respectively, over a perfect field $k$, and 
$I\subset R$ and $J\subset S$ are  graded ideals of finite colengths.

Moreover let $F_R:[0, \infty)\longto [0,\infty)$ and 
$F_S:[0, \infty)\longto [0, \infty)$ be the Hilbert-Samuel density functions 
 given by
$$ F_R(x) = e_0(R)x^{d_1-1}/{(d_1-1)!},\quad\mbox{and}\quad  F_S(x) = 
e_0(S)x^{d_2-1}/{(d_2-1)!}, $$
where, for a graded ring $R$,  $e_0(R)$ is 
the Hilbert-Samuel multiplicity of  $R$ with respect to its graded 
 maximal ideal.

In [T], we had proved that the HK density function is multiplicative for 
Segre products of standard graded rings. In Theorem~\ref{t2} and Corollary~\ref{c2}, we show that this property extends to graded domains.

\begin{thm}\label{t2}For the pairs $(R, I)$ and $(S,J)$ as above if
$\mbox{gcd}~\{m\mid R_m \neq 0\} = 1$ and $\mbox{gcd}~\{m\mid S_m \neq 0\} = 1$. Then 
the HK density function of the pair $(R\#S, I\#J)$, where 
  $R\#S = \oplus_{n\geq 0}R_n\tensor_kS_n$ is the  Segre product of $R$ and $S$,
is given by 
$$F_{R\# S}- f_{R\#S, I\# J} = \left[F_{R}- f_{R,I}\right]
\left[F_{S}- f_{S,J}\right]$$
and also 
$$\begin{array}{lll}
e_{HK}(R\# S, I\# J) & = & \frac{e_0(R)}{(d_1-1)!}\int_0^{\infty} x^{d_1-1}
f_{S,J}(x) dx  +
\frac{e_0(S)}{(d_2-1)!}\int_0^{\infty} x^{d_2-1}
f_{R,I}(x) dx\\\\
& &   - \int_0^{\infty} f_{R,I}(x) f_{S,J}(x)dx.
\end{array}$$    
\end{thm}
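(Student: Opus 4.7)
The plan is to extend the multiplicativity argument from the standard graded case of [T] by first reducing the theorem to a length identity at each degree and then taking limits. First observe that the hypotheses $\mbox{gcd}~\{m\mid R_m\neq 0\} = \mbox{gcd}~\{m\mid S_m\neq 0\} = 1$, together with Lemma~\ref{r2}, force $R_m$ and $S_m$ to be nonzero for all large $m$. Hence $(R\#S)_m = R_m\tensor_k S_m$ is nonzero for all large $m$, and so $\mbox{gcd}~\{m\mid (R\#S)_m\neq 0\} = 1$ as well. The Main Theorem~\ref{t1} then applies to each of $(R,I)$, $(S,J)$, and $(R\#S, I\#J)$, producing the HK density functions as uniform limits of step functions $f_n(M,K)(x) = \ell(M/K^{[q]})_{\lfloor xq\rfloor}/q^{\dim M -1}$; for $R\#S$ the scaling exponent is $d_1+d_2-2$ since $\dim(R\#S) = d_1+d_2-1$.

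The central step is the length identity, valid for every $m\in \N$ and $q = p^n$:
\begin{equation*}
\ell\bigl((R\#S)/(I\#J)^{[q]}\bigr)_m = \ell(R_m)\ell(S/J^{[q]})_m + \ell(R/I^{[q]})_m\ell(S_m) - \ell(R/I^{[q]})_m\ell(S/J^{[q]})_m.
\end{equation*}
I would derive it by first decomposing $I\#J = (I\#S) + (R\#J)$ as an ideal of $R\#S$, then establishing the Frobenius identity $(I\#J)^{[q]} = (I^{[q]}\#S) + (R\#J^{[q]})$ in each graded degree via the characteristic-$p$ formula $(a+b)^q = a^q + b^q$, and finally applying inclusion-exclusion on the tensor piece $R_m\tensor_k S_m$. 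This is the main obstacle: in the non-standard graded setting the Frobenius decomposition lacks the convenience of degree-one generators, so one cannot simply lift the argument of [T] verbatim. An alternative route, mirroring the strategy of the Main Theorem, is to pass to a common Veronese via Lemma~\ref{r1} so that $R^{(r)}$ and $S^{(r)}$ are both standard graded and $(R\#S)^{(r)} = R^{(r)}\#S^{(r)}$, apply the known identity from [T] there, and transfer back via the scaling of density functions under Veronese (Lemma~\ref{l13}); any intermediate-degree discrepancy is of lower order and vanishes after dividing by $q^{d_1+d_2-2}$ and taking the limit.

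Given the length identity, substitute $m = \lfloor xq\rfloor$ and divide by $q^{d_1+d_2-2}$. Each term converges pointwise as $n\to\infty$: the Hilbert polynomial of $R$ gives $\ell(R_{\lfloor xq\rfloor})/q^{d_1-1}\to F_R(x) = e_0(R)x^{d_1-1}/(d_1-1)!$, the Main Theorem gives $\ell(R/I^{[q]})_{\lfloor xq\rfloor}/q^{d_1-1}\to f_{R,I}(x)$, and the analogous statements for $S$ hold by symmetry. Taking limits in the length identity yields
\begin{equation*}
f_{R\#S, I\#J}(x) = F_R(x)f_{S,J}(x) + f_{R,I}(x)F_S(x) - f_{R,I}(x)f_{S,J}(x),
\end{equation*}
and since $F_{R\#S} = F_R\cdot F_S$ (both describe the leading growth of $\ell((R\#S)_n) = \ell(R_n)\ell(S_n)$), this rearranges to $F_{R\#S} - f_{R\#S, I\#J} = (F_R - f_{R,I})(F_S - f_{S,J})$. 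The integral formula then follows by integrating this pointwise identity, invoking $e_{HK}(R\#S, I\#J) = \int_0^\infty f_{R\#S, I\#J}(x)\,dx$ from Theorem~\ref{t1}, and substituting the explicit polynomial forms $F_R(x) = e_0(R)x^{d_1-1}/(d_1-1)!$ and $F_S(x) = e_0(S)x^{d_2-1}/(d_2-1)!$.
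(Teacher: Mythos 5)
Your skeleton --- establish a degreewise length identity, divide by $q^{d_1+d_2-2}$, pass to the limit using the uniform convergence and bounded supports supplied by Theorem~\ref{t1}, and finish with $F_{R\#S}=F_RF_S$ --- is exactly the paper's argument. The genuine gap is in the step you yourself single out as the main obstacle. With the Segre-product ideal $I\#J=\oplus_n I_n\tensor_k J_n$ (the convention your length identity requires), the decomposition $I\#J=(I\#S)+(R\#J)$ is false: in degree $n$ one has $I_n\tensor_k J_n\subsetneq I_n\tensor_k S_n+R_n\tensor_k J_n$ as soon as $I_n\neq R_n$, $J_n\neq S_n$, $I_n\neq 0$. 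Worse, if you actually run the inclusion--exclusion you propose with that sum, you get $\ell\bigl((R\#S)/(I\#J)^{[q]}\bigr)_m=\ell(R/I^{[q]})_m\,\ell(S/J^{[q]})_m$, i.e.\ $f_{R\#S,I\#J}=f_{R,I}f_{S,J}$, which contradicts the identity you state and the theorem itself: for $R=S=k[x,y]$ with the maximal ideals, the product formula integrates to $e_{HK}=2/3$, while the theorem's formula gives $4/3$, the known value for $k[x,y]\#k[u,v]\cong k[a,b,c,d]/(ad-bc)$. What is actually needed, and what the paper uses (asserting it without further comment), is the degreewise equality $\bigl((I\#J)^{[q]}\bigr)_m=(I^{[q]})_m\tensor_k(J^{[q]})_m$: since $I_n\tensor_k J_n$ is spanned by decomposable tensors, $(I\#J)^{[q]}$ is generated by the elements $\alpha^q\tensor\beta^q$ with $\alpha\in I_n$, $\beta\in J_n$ of equal degree; the inclusion $\subseteq$ is immediate, and the substance is the reverse inclusion, where generators of $I$ and $J$ of different degrees must be handled. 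No splitting of $I\#J$ into $(I\#S)+(R\#J)$ enters.

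Your fallback through a common Veronese does not close this gap either. When both gcds are $1$, the ring $R\#S$ is nonzero in all large degrees, so $I\#J\neq I^{(r)}\#J^{(r)}$ and $(R\#S)^{(r)}$ forgets all degrees prime to $r$ (contrast Corollary~\ref{c2}, where nothing is lost because the Segre product is concentrated in degrees divisible by $l$); Frobenius powers do not commute with passing to Veronese subrings, since $(I^{(r)})^{[q]}$ computed inside $R^{(r)}$ can be strictly smaller than $I^{[q]}\cap R^{(r)}$ (the generators of $I$ need not lie in $R^{(r)}$); and Lemma~\ref{l13} compares $(S,I)$ with $(R,IR)$ along a degree-preserving finite extension, so it does not relate $f_{R^{(r)},I^{(r)}}$ to $f_{R,I}$. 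Recovering a density function from its values sampled in degrees divisible by $r$ is exactly the kind of estimate the paper proves by hand (Claim~(A) in Lemma~\ref{l2}), not a soft lower-order remark. So either prove the degreewise identity $\bigl((I\#J)^{[q]}\bigr)_m=(I^{[q]})_m\tensor_k(J^{[q]})_m$ directly, or import it from [T] after checking its proof does not use the standard-graded hypothesis; the remainder of your argument then coincides with the paper's proof.
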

\begin{proof}
Note that $R\#S$ is a graded integral domain with  
gcd $\{n\mid (R\# S)_n\neq 0\} = 1$.
Therefore 
$$(q^{d_1-1}q^{d_2-1})f_n(R\# S, I\# J)(m/q) = 
\ell({R\# S}/{(I\# J)^{[q]}})_{m}$$

$$ = \ell(R_{m})\ell(S_{m}) - 
\left[\ell(R_{m})-\ell({R}/{I^{[q]}})_{m}\right]
\left[\ell(S_{m})-\ell({S}/{J^{[q]}})_{m}\right].
$$
Hence 
$$f_n(R\# S, I\# J)(x)  = 
  f_n(S,J)\frac{\ell(R)_{\lfloor xq\rfloor }}{q^{d_1-1}}
+f_n(R, I)\frac{\ell(S)_{\lfloor xq\rfloor }}{q^{d_2-1}} - f_n(R,I)f_n(S,J).$$

Since $\{f_n(R\#S, I\#J)\}_n$, $\{f_n(R, I)\}_n$ and
$\{f_n(S, J)\}_n$ are uniformly convergent sequences with bounded supports,
taking limit as $n\to \infty$ we get, 
$$f_{R\# S, I\# S}(x) = F_R(x)f_{S,J}(x)+F_S(x)
f_{R,I}(x)-
f_{R,I}(x)f_{S,J}(x)\quad\mbox{for all}\quad x\geq 0.$$
 The rest of the proof follows as  $F_{R\# S}(x) = F_R(x)F_S(x)$.
\end{proof}

\begin{notations}
For a graded domain $R = \oplus_{n\geq 0}R_n$  with a graded 
ideal $I = \oplus_{n\geq 1}I_n$, we denote  
$R^{(m)} = \oplus_n R_{nm}~~\mbox{with degree}~~n~~\mbox{component}~~ = R_{nm}$

and
${I}^{(m)} = {I}\cap R^{(m)} = \oplus_n {I}_{nm}.$
\end{notations}

\begin{cor}\label{c2}If
$\mbox{gcd}~\{m\mid R_m \neq 0\} = n_1$ and $\mbox{gcd}~\{m\mid S_m \neq 0\} = n_2$.
Then 
$$F_{R\# S}- f_{R\#S, I\# J} = \left[F_{R^{(l)}}- f_{R^{(l)},I^{(l)}}\right]
\left[F_{S^{(l)}}- f_{S^{(l)},J^{(l)}}\right]$$
and 
$$\begin{array}{lll}
e_{HK}(R\# S, I\# J) & = & \frac{e_0(R^{(l)})}{(d_1-1)!}\int_0^{\infty} x^{d_1-1}
f_{S^{(l)},J^{(l)}}(x) dx  +
\frac{e_0(S^{(l)})}{(d_2-1)!}\int_0^{\infty} x^{d_2-1}
f_{R^{(l)},I^{(l)}}(x) dx\\\\
& &   - \int_0^{\infty} f_{R^{(l)},I^{(l)}}(x) f_{S^{(l)},J^{(l)}}(x)dx,
\end{array}$$    
where $l= \mbox{lcm}(n_1, n_2)$.
\end{cor}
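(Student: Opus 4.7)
The plan is to reduce Corollary~\ref{c2} to Theorem~\ref{t2} by passing to $l$-th Veronese subrings, which restores the gcd-one hypothesis.

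Since $(R\#S)_m = R_m\otimes_k S_m$ is nonzero iff both $R_m$ and $S_m$ are nonzero, Lemma~\ref{r2} gives $\gcd\{m : (R\#S)_m\neq 0\} = l$. A componentwise check yields $(R\#S)^{(l)} = R^{(l)}\# S^{(l)}$ and $(I\#J)^{(l)} = I^{(l)}\# J^{(l)}$. Because $l$ is a common multiple of $n_1$ and $n_2$, Lemma~\ref{r2} also gives $\gcd\{m : R^{(l)}_m\neq 0\} = 1 = \gcd\{m : S^{(l)}_m\neq 0\}$, so Theorem~\ref{t2} applies to the pairs $(R^{(l)}, I^{(l)})$ and $(S^{(l)}, J^{(l)})$ and produces
\[
F_{R^{(l)}\#S^{(l)}} - f_{R^{(l)}\#S^{(l)},\, I^{(l)}\#J^{(l)}}
= \bigl(F_{R^{(l)}} - f_{R^{(l)},\,I^{(l)}}\bigr)\bigl(F_{S^{(l)}} - f_{S^{(l)},\,J^{(l)}}\bigr).
\]

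The crucial remaining step is to identify $f_{R\#S, I\#J}$ with $f_{R^{(l)}\#S^{(l)},\, I^{(l)}\#J^{(l)}}$, and $F_{R\#S}$ with $F_{R^{(l)}\#S^{(l)}}$. Since $R\#S$ has gcd exactly $l$, only the first summand in Definition~\ref{d1} contributes, so $f_n(R\#S, I\#J)(x) = \frac{1}{q^{d-1}}\ell((R\#S)/(I\#J)^{[q]})_{\lfloor xq\rfloor l}$, and the analogous statement holds for $F_{R\#S}$. I would then verify the componentwise identity
\[
\bigl((I\#J)^{[q]}\bigr)_{ml} = \bigl((I^{(l)}\#J^{(l)})^{[q]}\bigr)_m,
\]
both understood as subspaces of $(R\#S)_{ml} = (R^{(l)}\#S^{(l)})_m$. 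This is where the main (albeit mild) obstacle lies: one checks that any homogeneous $z \in I\#J$ has $R\#S$-degree divisible by $l$, so $z^q \in (R\#S)^{(l)}$, and if $z^q r \in (R\#S)_{ml}$ then $\deg r$ is likewise a multiple of $l$, forcing $r \in (R\#S)^{(l)}$. Taking lengths then gives the identification of the HK density functions, and the trivial identity $(R\#S)_{ml} = (R^{(l)}\#S^{(l)})_m$ does the same for $F_{R\#S}$.

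Substituting these identifications into the displayed equation above proves the first claim of Corollary~\ref{c2}. The Hilbert--Kunz multiplicity formula then follows by integrating, noting $e_{HK}(R\#S, I\#J) = \int f_{R\#S, I\#J}(x)\,dx = e_{HK}(R^{(l)}\#S^{(l)}, I^{(l)}\#J^{(l)})$, and applying the $e_{HK}$ formula of Theorem~\ref{t2} with $F_{R^{(l)}}(x) = e_0(R^{(l)})x^{d_1-1}/(d_1-1)!$ and its counterpart for $S^{(l)}$.
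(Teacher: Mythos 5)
Your proposal is correct and follows essentially the same route as the paper: the paper's proof is exactly the observation that $R\#S = R^{(l)}\#S^{(l)}$ and $I\#J = I^{(l)}\#J^{(l)}$ (since the Segre product lives only in degrees divisible by $l$), that the Veronese pairs satisfy the gcd-one hypothesis, and then an application of Theorem~\ref{t2}. Your extra verifications (that only the first summand of Definition~\ref{d1} contributes, that Frobenius powers match degreewise, and the integration step for $e_{HK}$) are just the routine details the paper leaves implicit.
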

\begin{proof}Since $R\#S= R^{(l)}\#S^{(l)}$ and $I\#J = I^{(l)}\#J^{(l)}$ and
$\mbox{gcd}~\{m\mid R^{(l)}_m \neq 0\} = \mbox{gcd}~\{m\mid S^{(l)}_m \neq 0\} = 1$,
the corollary follows from the above theorem.\end{proof}

\section{Applications and examples}

The coefficients of the HK function for  a pair $(R, I)$ (given as $HK(q) = 
1/q^d\ell(R/I^{[q]})$) 
 have a nice geometric description (see [K]) provided 
$\mbox{ proj dim}_R(R/I) < \infty$.
Here we prove that the HK density function too has a nice description 
in the  case of such graded pairs.

\begin{propose}\label{p2}Let $(R, I)$ be  a graded pair such that 
 $\mbox{proj dim}_R(R/I) < \infty$ then 
the HK density function $f_{R,I}$ is a piecewise polynomial 
function of degree $d-1$, where $f_{R,I}$ (and hence $e_{HK}(R, I)$)
is  given 
in terms of the graded Betti numbers of the minimal graded $R$-resolution of $R/I$.
\end{propose}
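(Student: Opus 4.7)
The plan is to combine the Peskine--Szpiro acyclicity theorem with the definition of $f_n(R,I)$ from Definition~\ref{d1}. Take the minimal graded free resolution
$$0\to F_p\to F_{p-1}\to\cdots\to F_0\to R/I\to 0,\qquad F_i=\bigoplus_j R(-a_{ij})^{\beta_{ij}},$$
with graded Betti numbers $\beta_{ij}$ and shifts $a_{ij}$. Because $\mbox{proj dim}_R(R/I)<\infty$, Peskine--Szpiro guarantees that its $q$-th Frobenius pullback
$$0\to F_p^{[q]}\to\cdots\to F_0^{[q]}\to R/I^{[q]}\to 0,\qquad F_i^{[q]}=\bigoplus_j R(-qa_{ij})^{\beta_{ij}},$$
is again exact. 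Taking alternating sums of graded lengths yields the exact identity
$$\ell(R/I^{[q]})_m=\sum_{i,j}(-1)^i\beta_{ij}\,P_R(m-qa_{ij}),\qquad P_R(m):=\ell(R_m).$$

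Next I would simplify the block sum in Definition~\ref{d1}. Set $n_0=\mbox{gcd}\{n\mid R_n\neq 0\}$; then $R_n=0$ whenever $n_0\nmid n$, so $P_R(m)=0$ unless $n_0\mid m$. Moreover, every shift $a_{ij}$ appearing in the resolution is divisible by $n_0$: the generator of $F_0=R$ sits in degree $0$, the entries in the differentials are homogeneous elements of $R$ (hence in degrees divisible by $n_0$), and the shifts in higher $F_i$ differ from those below by such degrees. Write $a_{ij}=n_0 b_{ij}$. Thus among the $n_0$ consecutive degrees in the block sum of Definition~\ref{d1}, only the term at $\lfloor xq\rfloor n_0$ is nonzero, so
$$f_n(R,I)(x)=\frac{1}{q^{d-1}}\sum_{i,j}(-1)^i\beta_{ij}\,P'\bigl(\lfloor xq\rfloor-qb_{ij}\bigr),$$
where $P'(k):=P_R(kn_0)=\ell(R_{kn_0})$ is, for $k$ large, a polynomial of degree $d-1$ with leading coefficient $c$, and vanishes for $k<0$.

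Uniform convergence of $\{f_n(R,I)\}$ is already in hand from Theorem~\ref{t1}, so it remains only to identify the pointwise limit. For fixed $x\geq 0$ and each $(i,j)$,
$$\lim_{q\to\infty}\frac{P'(\lfloor xq\rfloor-qb_{ij})}{q^{d-1}}=c\,(x-b_{ij})_+^{\,d-1},\qquad (y)_+:=\max(y,0),$$
since $P'$ is asymptotically $c\cdot(-)^{d-1}$ on the positive half-line and zero on the negative half-line. Hence
$$f_{R,I}(x)=c\sum_{i,j}(-1)^i\beta_{ij}(x-b_{ij})_+^{\,d-1},$$
a piecewise polynomial of degree $d-1$ with break points at $x=a_{ij}/n_0$, expressed explicitly in the graded Betti data; integrating term-by-term yields the corresponding closed formula for $e_{HK}(R,I)$.

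The delicate point is the divisibility observation $n_0\mid a_{ij}$, which collapses the block sum to a single nonzero term and turns the limit into a (piecewise) polynomial rather than a quasi-polynomial; this is where the non-standard grading is absorbed. Once Peskine--Szpiro is invoked, the rest is a direct asymptotic computation, facilitated by the uniform convergence already supplied by Theorem~\ref{t1}.
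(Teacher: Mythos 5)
Your proposal is correct and is essentially the paper's own argument: Peskine--Szpiro exactness of the Frobenius pullback of the minimal graded resolution, the resulting alternating-sum expression for $\ell(R/I^{[q]})_m$ in the graded Betti numbers, and passage to the limit via Hilbert-function asymptotics, giving the same piecewise polynomial (the paper merely groups the terms as $\B(j)=\sum_i(-1)^i\beta_{ij}$ and writes the leading constant via $e_0(R,{\bf m})/(d-1)!$, while your observation that $n_0\mid a_{ij}$ makes explicit why the block sum in Definition~\ref{d1} collapses, a point the paper leaves implicit). The one assertion to soften is that $P'(k)=\ell(R_{kn_0})$ is eventually a polynomial: for a non-standard grading it is in general only a quasi-polynomial, but because $R$ is a domain with $\gcd\{n\mid R_n\neq 0\}=n_0$ the coefficient of $k^{d-1}$ is the same on every congruence class, and this asymptotic statement is all that your limit computation actually uses.
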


\begin{proof}Consider  the minimal graded resolution of $R/I$ 
over the graded ring $R$
$$0\longto \displaystyle{\oplus_{j\in \Z}R(-j)^{\beta_{d,j}}}
\longto \oplus_{j\in \Z}R(-j)^{\beta_{d-1,j}}\longto \cdots
\longto \oplus_{j\in \Z}R(-j)^{\beta_{1,j}}\longto R \longto R/I\longto 0.$$

Since the functor of Frobenius is exact on the category of 
modules of finite type and finite projective dimension (a corollary 
of the acyclicity lemma by Peskine-Szpiro [PS]), 
 we have a long exact sequence
$$0\longto \oplus_{j\in \Z}R(-qj)^{\beta_{d,j}}
\longto \oplus_{j\in \Z}R(-qj)^{\beta_{d-1,j}}\longto \cdots
\longto \oplus_{j\in \Z}R(-qj)^{\beta_{1,j}}\longto R \longto R/I\longto 0.$$

Let
${\tilde e_0} = e_0(R, {\bf m})/(d-1)!$ and 
let $$\B(j)= \beta_{0j}-\beta_{1j}+\beta_{2j}+\cdots + (-1)^d\beta_{dj}.$$

Note that  $\beta_{00}=1$ and  $\beta_{0, j}=0$ for $j\neq 0$. 
For $j < 0$, $\B(j) = 0$.
If $l$ be the largest integer such that $\beta_{il}\neq 0$ for some $i$.
Then
$$\ell(R/I^{[q]})_m = \ell(R_m) + \B(1)\ell(R_{m-q})+
\B(2)\ell(R_{m-q}) + \cdots + {\B}(l)\ell(R_{m-lq})$$
and therefore

$$\begin{array}{rll}
f_{R,I}(x) = & {\tilde e_0}\left[x^{d-1}\right]  & 0 \leq x \leq 1\\
 = & {\tilde e_0}\left[x^{d-1}+\B(1)(x-1)^{d-1}\right]  & 1 \leq x \leq 2\\

 & \vdots &\\
 = & {\tilde e_0}\left[x^{d-1}+\B(1)(x-1)^{d-1} +\cdots + \B(i)(x-i)^{d-1}
\right]  & i \leq x \leq (i+1)\\

 = & {\tilde e_0}\left[x^{d-1}+\B(1)(x-1)^{d-1} +\cdots + \B(l-1)(x-l+1)^{d-1}
\right]  & l-1 \leq x \leq l\\
= & 0 & l\leq x.\end{array}$$

Note that  $f_{R,I}$ is a compactly supported function which implies that
the polynomial
$$ x^{d-1}+\B(1)(x-1)^{d-1} +\cdots + \B(l)(x-l)^{d-1} = 0.$$
Hence $\supp(f_{R, I})\subseteq [0, l]$.

Moreover $$e_{HK}(R,I) = \frac{e_0}{d!}\left[\B(0)l^d+\B(1)(l-1)^d +
\cdots + \B(i)(l-i)^d +\cdots +\B(l-1)\right]$$
\end{proof}

\vspace{10pt}

\noindent{\underline {Proof of Corollary~\ref{c3}}}:
Let $m_0 = \mbox{gcd} \{n>0\mid S_n \neq 0\}$ and 
$n_0 = \mbox{gcd} \{n>0\mid R_n \neq 0\}$ and $l_0 = m_0/n_0$.
 Then, by Theorem~\ref{t1} and Lemma~\ref{l13}, for $x\geq 0$,  
$$f_{S, I}(x) = \frac{1}{\rank_SR}f_{R_S, I}(x) = 
\frac{l_0}{\rank_SR}f_{R, IR}(xl_0)\quad\mbox{and}\quad 
e_{HK}(S, I) = \frac{e_{HK}(R,IR)}{\rank_SR}.$$
Hence the corollary follows from Proposition~\ref{p2}.\hspace{50pt}\hfill$\Box$

\vspace{10pt}

\begin{rmk}If $(R,I)$ is a graded pair such that
   $R/IR$ has the  finite pure resolution 
$$0\longto \oplus^{\beta_d}R(-j_d)\longto \cdots \longto \oplus^{\beta_2}R(-j_2)
\longto \oplus^{\beta_1}R(-j_1)\longto R\longto R/I\longto 0$$
then $j_1<j_2 <\cdots <j_d$
and
$$\B(1) = \cdots = \B(j_1-1) = 0 ~~and ~~\B(j_1) = -\beta_1$$
$$\B(j_{n-1}+1) = \cdots = \B(j_n-1) = 0 ~~and ~~\B(j_n) = (-1)^n\beta_n.$$
Hence

$$\begin{array}{rll}
f_{R,I}(x) = & {\tilde e_0}\left[x^{d-1}\right]  & 0 \leq x \leq j_1\\
 = & {\tilde e_0}\left[x^{d-1}-\beta_1(x-j_1)^{d-1}\right]  & j_1 \leq x 
\leq j_2\\

 = & {\tilde e_0}\left[x^{d-1}-\beta_1(x-j_1)^{d-1} +\cdots +
(-1)^{d-1} \beta_{{d-1}}(x-j_{d-1})^{d-1}
\right]  & j_{d-1} \leq x \leq j_d\\
= & 0 & j_d\leq x. \end{array}$$
Here the maximum support of $f_{R, I} = \alpha(R, I) = j_d$, as
$\beta_d \neq 0$.
\end{rmk}

\vspace{10pt}

\subsection{Some concrete examples}

We recall the Hilbert-Burch theorem (see [BH]).

\begin{thm}Let $\psi:R^n\longto R^{n+1}$ be a $R$-linear map, where $R$ 
is a Noetherian ring. Let $I = I_n(\psi)$ be the ideal generated by 
$n+1$  elements consisting of $n\times n$ 
minors of the matrix given by $\psi$.

Then $\mbox{grade}~I_n(\psi)\geq 2$ implies the ideal $I$ has the resolution
of the form 
$$0\longto R^n\longby{\psi} R^{n+1} \longto I \longto 0.$$
\end{thm}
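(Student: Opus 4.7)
The plan is to construct the resolution explicitly and then verify exactness using the grade hypothesis. First I would define $\varphi\colon R^{n+1}\longto R$ by $\varphi(e_i)=(-1)^{i-1}m_i$, where $m_i$ is the determinant of the $n\times n$ submatrix of $\psi$ obtained by deleting the $i$-th row. By construction the image of $\varphi$ is exactly $I_n(\psi)=I$. For each basis vector $e_j$ of $R^n$, the scalar $\varphi(\psi(e_j))$ is, up to sign, the Laplace expansion along the last column of the $(n+1)\times (n+1)$ matrix obtained by appending the $j$-th column of $\psi$ to $\psi$ itself; since this matrix has a repeated column, its determinant vanishes, and so $\varphi\circ\psi=0$. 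This gives a complex $0\longto R^n\longby{\psi} R^{n+1}\longby{\varphi} R$ whose cokernel is $R/I$, so it suffices to prove exactness at $R^n$ and at $R^{n+1}$.

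Next I would show that $\psi$ is injective. If $\psi(x)=0$ for some $x\in R^n$, then for each $i$ the $n\times n$ submatrix $\psi_{(i)}$ obtained by deleting the $i$-th row also annihilates $x$; multiplying by the classical adjoint of $\psi_{(i)}$ gives $m_i\cdot x=0$ for every $i$, hence $Ix=0$. Since $\mbox{grade}(I)\geq 2\geq 1$, the ideal $I$ contains a non-zero-divisor on $R$, forcing $x=0$.

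The main step is exactness at $R^{n+1}$, i.e.\ $\mbox{Ker}\,\varphi\subseteq \mbox{im}\,\psi$, and for this the cleanest tool is the Buchsbaum--Eisenbud acyclicity criterion: a complex $0\longto F_k\longto\cdots\longto F_0$ of finitely generated free $R$-modules is exact if and only if, writing $r_i$ for the expected rank of the $i$-th differential $d_i$, one has $\mbox{rank}(F_i)=r_i+r_{i+1}$ and $\mbox{grade}(I_{r_i}(d_i))\geq i$ for each $i$. In our situation the expected ranks are $r_1=1$ and $r_2=n$; the relevant ideals of maximal minors are $I_1(\varphi)=I$ and $I_n(\psi)=I$; both have grade $\geq 2$ by hypothesis, which is more than enough for the criterion to apply. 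This yields the desired exactness and completes the proof.

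The hard part is precisely the Buchsbaum--Eisenbud step. Proving the criterion from scratch requires a depth-theoretic local analysis: one localizes at each associated prime of $R$ in turn and exploits the fact that $I$ is contained in no prime of grade $<2$, together with an induction on depth. A more hands-on alternative is to take $v\in\mbox{Ker}\,\varphi$, form the augmented square matrix $\widetilde{\psi}=(\psi\mid v)$ whose determinant equals $\varphi(v)=0$, and use a Cramer-rule argument combined with $\mbox{grade}(I)\geq 2$ to express $v$ as an $R$-linear combination of the columns of $\psi$; however, the careful bookkeeping with zero-divisors is essentially equivalent in difficulty to proving the acyclicity criterion itself, so I would simply invoke Buchsbaum--Eisenbud.
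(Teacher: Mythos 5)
The paper itself offers no proof of this statement: it is quoted as the Hilbert--Burch theorem with a citation to [BH], so there is nothing internal to compare against. Your proposal is correct and is essentially the standard argument from that reference --- build the complex $0\to R^n\by{\psi} R^{n+1}\by{\varphi} R$ with $\varphi$ given by the signed maximal minors, get injectivity of $\psi$ from $\mbox{grade}\,I\geq 1$ via the adjugate trick, and obtain exactness at $R^{n+1}$ from the Buchsbaum--Eisenbud acyclicity criterion (whose hypotheses you verify correctly: ranks $n$ and $1$, both determinantal ideals equal to $I$ of grade $\geq 2$); invoking that criterion rather than reproving it is exactly what the cited treatment does.
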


\vspace{10pt}

In the following examples we compute the HK density function 
$f_{S, I}$, where $S= R^G $ is the ring of invariants in $R= k[x_1, x_2]$ 
with  $G\in \{A_n, D_n, E_6, E_7, E_8\}$
and $I\subset S$ is its graded maximal ideal. This also recovers the 
computations 
of $e_{H}(R^G, I)$ given in Theorem~5.1 of [WY].
For this it is enough to construct the minimal graded resolution of 
$IR$ as a $R$-module.

Note that in all the following cases $R^G = k[h_1, h_2, h_3]\subset k[x_1, x_2]$, where 
$h_1, h_2, h_3$ are explicit homogeneous polynomials 
in $x_1, x_2$ (see  Chap~X, page 225 of [MBD]). 
 Using the Hilbert-Burch theorem, we will construct a
$R$-resolution for $IR = (h_1, h_2, h_3)R$, 
which is of the folllowing type:
\begin{equation}\label{re1}0\longto R(-l_1)\oplus R(-l_2)\longby{\psi}
R(-\deg~h_1)\oplus R(-\deg~h_2)\oplus R(-\deg~h_3)\longby{\phi} IR\longto 0,
\end{equation} 
where $\phi$ is given by the matrix $[h_1, h_2, h_3]$. In the forthcoming 
 set of examples we define the map $\psi$ by giving a $3\times 2$ matrix in $R$
(this will also determine the  values $l_1, l_2$).
Since $\mbox{grade}~IR = 2$, to prove that (\ref{re1}) is exact, 
 it only remains to check that $I_2(\psi) = IR$ which can be done 
 easily.

\vspace{5pt}

\begin{ex}\label{A_n} Let $G = A_n$ then $|G| = n\geq 2$ and 
$\Char~k = p \geq 2$ and $(p, n) = 1$. 

$$R^G = k[h_1, h_2, h_3] \cong
\frac{k[x_1, x_2, x_3]}{(x_1^n+x_2x_3)},$$
 where $h_1 = x_1x_2,~~ h_2 = x_1^n$ and $h_3 = x_2^n$.
The map  $\psi$  is given by the matrix 
$$
\begin{bmatrix}x_1^{n-1} & -x_2 & 0\\
x_2^{n-1} & 0  & -x_1 \end{bmatrix} .$$
Then the sequence
$$0\longto R(-n-1)\oplus R(-n-1) \longby{\psi} R(-2)\oplus R(-n) 
\oplus R(-n) \longto IR \longby{\phi} 0$$
is the minimal resolution for $IR$
as $I_2(\psi) = I$.
Here $\B(2) = -1$, $\B(n) = -2$ and $\B(n+1) = 2$.
If $n$ is even then the HK density function $f_{S,I}$ is given by 
$$\begin{array}{lcll}
f_{S,I}(x) & = & 4x/(n+1) & \mbox{if}\quad 0\leq x \leq 1\\
& = & 4/(n+1) & \mbox{if}\quad 1\leq x\leq n/2\\
& = & 2(2-4x+2n)/(n+1) & \mbox{if}\quad n/2 \leq x\leq (n+1)/2
\end{array}
$$

If $n$ is odd then the HK density function $f_{S,I}$ is given by 
$$\begin{array}{lcll}
f_{S,I}(x) & = & x/(n+1) & \mbox{if}\quad 0\leq x \leq 2\\
& = & 2/(n+1) & \mbox{if}\quad 2\leq x\leq n\\
& = & (2-2x+2n)/(n+1) & \mbox{if}\quad n \leq x\leq n+1
\end{array}$$
\end{ex}

\vspace{5pt}

\begin{ex}\label{D_n}Let $G= D_n$ the dihedral group then $|G| = 4n$ and
$\Char~k = p\geq 3$ and $(p, n) = 1$.
$$R^G = k[h_1, h_2, h_3]
= \frac{k[x_1, x_2, x_3]}{(x_3^2+x_1 x_2^2+ x_1^{n+1})},$$    
where 
$$h_1 = -2x_1^2x_2^2,~~ h_2 = x_1^{2n}+(-1)^nx_2^{2n}, 
h_3 = x_1x_2(x_1^{2n}-(-1)^nx_2^{2n}.$$

We assume $n$ is even.
Let map $\psi$ is  given by the matrix 
$$
\begin{bmatrix}-2x_1^{n-1} & x_1x_2^2 & x_2\\
-2x_2^{n-1} & -x_1^2x_2  & x_1 \end{bmatrix}$$
Then the sequence
$$0\longto R(-2n-3)\oplus R(-2n-3) \longby{\psi} R(-4)\oplus R(-2n) 
\oplus R(-2n-2) \longby{\phi} IR \longto 0$$
is the minimal resolution for $IR$
as $I_2(\psi) = IR$.

Here $\B(4) = -1$, $\B(2n) = -1$ and $\B(2n+2) = -1$.
If $n$ is even then the HK density function  $f_{S,I}$ is given by 
$$\begin{array}{lcll}
f_{S,I}(x) & = & x/n-2 & \mbox{if}\quad 0\leq x \leq 2\\
& = & 2/(n-2) & \mbox{if}\quad 2\leq x\leq n\\
& = & (n+2-x)/(n-2) & \mbox{if}\quad n \leq x\leq n+1\\
& = & (2n+3-2x)/(n-2) & \mbox{if}\quad n+1\leq x\leq n+3/2
\end{array}
$$
and hence $e_{HK}(R^G, I) = 2-1/4n$.
\end{ex}

\vspace{5pt}

\begin{ex}\label{e6}
Let $G = E_6$ the tetrahedral group then $|G| = 24$ and $\Char~k = p\geq 5$. 
 $$R^G = k[h_1, h_2, h_3] = 
\frac{k[x_1,x_2,x_3]}{(6a x_1^2- x_2^3+x_3^3)},\quad\mbox{where}\quad 
a= 2\sqrt{-3}$$

$$h_1 = x_1^5x_2-x_1x_2^5,\quad h_2 = x_1^4+ax_1^2x_2^2 + x_2^{4}, \quad
h_2 = x_1^4 - ax_1^2x_2^2 + x_2^{4}.$$

Let $\psi$ be given by the matrix
$$
\begin{bmatrix}x_1 &  -(a/2)x_1^2x_2-x_2^3 &  (a/2)x_1^2x_2-x_2^3 \\
x_2 &  x_1^3+(a/2)x_1x_2^2   &  x_1^3 - (a/2)x_1x_2^2   \end{bmatrix}$$
Then $I_2(\psi) = (ah_1, h_2, h_3)R$.
If $a\neq 0$ in $k$ then the canonical sequence 
$$0\longto R(-7)\oplus R(-7) \longby{\psi} R(-6)\oplus R(-4) \oplus R(-4)
\longby{\phi} IR\longto 0$$
is the minimal $R$-resolution of $IR$.

Here $\B(4) = -2$, $\B(6) = -1$ and $\B(7) = 2$ and the HK 
density function $f_{S,I}$ is given by 
$$\begin{array}{lcll}
f_{S,I}(x) & = & x/6 & \mbox{if}\quad 0\leq x \leq 2\\
& = & (4-x)/6 & \mbox{if}\quad 2\leq x\leq 3\\
& = & (7-2x)/6 & \mbox{if}\quad 3 \leq x\leq 7/2\\
& = & 0 & \mbox{otherwise}
\end{array}
$$
\end{ex}

\vspace{5pt}

\begin{ex}\label{e7}
Let $G = E_7$ octahedral group then $|G|= 24$ and $\Char~k \geq 5$
 and $$R^G = k[h_1, h_2, h_3] = 
\frac{k[x_1, x_2, x_3]}{(108x_1^4-x_2^3+x_3^2)},\quad\mbox{where}$$ 
$$h_1 = x_1^5x_2-x_1x_2^5, \quad h_2 = x_1^8+14x_1^4x_2^4 + x_2^{8}, \quad
h_3 = x_1^{12}-33(x_1^{8}x_2^4)- 33(x_1^4x_2^8) + x_2^{12}.$$

Let $\psi$ be given by the matrix
$$
\begin{bmatrix}-7x_1^{4}x_2^{3} - x_2^7 & x_1^5  & x_1\\
7x_1^{3}x_2^{4} +  x_1^7 &  x_2^5  & x_2  \end{bmatrix}$$

If $\Char~k >3$ then
 $(h_1, h_2, h_3)R =  I_2(\psi)$ and hence  
the minimal $R$-resolution for $IR$ is given by 

$$0\longto R(-13)\oplus R(-13) \longby{\psi} R(-6)\oplus R(-8) 
\oplus R(-12) \longby{\phi} IR \longto 0.$$

Here $\B(6) = -1$, $\B(8) = -1$, $\B(12) = -1$ and $\B(13) = 2$. Hence  the HK 
density of $(S,I)$ is given by 
$$\begin{array}{lcll}
f_{S,I}(x) & = & x/48 & \mbox{if}\quad 0\leq x \leq 6\\
& = & 6/48 & \mbox{if}\quad 6\leq x\leq 8\\
& = & (14-x)/48 & \mbox{if}\quad 8 \leq x\leq 12\\
& = & (26-2x)/48 & \mbox{if}\quad 12 \leq x\leq 13\\
& = & 0 & \mbox{otherwise}.
\end{array}
$$
and hence $e_{HK}(R^G, I) = 2-(1/24)$.

\end{ex}

\vspace{5pt}

\begin{ex}\label{e8}Let $G = E_8$ the icosahedral group then 
$|G| = 120$ and $\Char~k \geq 7$.
Now
$$R^G = k[h_1, h_2, h_3] = 
\frac{k[x_1, x_2, x_3]}{(x_2^2+x_3^3-1728 x_1^5)},$$ where 

$$h_1 = x_1x_2(x_1^{10}+11x_1^5x_2^5-x_2^{10})$$

$$h_2 = x_1^{30}+x_2^{30}+522(x_1^{25}x_2^5- x_2^{25}x_1^5) 
- 10005(x_1^{20}x_2^{10}+x_1^{10}x_2^{20})$$

$$h_3 = -x_1^{20}-x_2^{20}+228(x_1^{15}x_2^5- x_2^{15}x_1^5) 
-494(x_1^{10}x_2^{10}).$$

Let $\psi$ be given by the matrix
$$ \begin{bmatrix}
x_1 & f_2 & f_3\\
x_2 & g_2 & g_3\end{bmatrix}$$
where $$f_2 = -x^{11}-(11/2)x_1^6x_2^5. \quad\quad 
f_3 = x_2^{19}+ax_1^{5}x_2^{14}+(b/2)x_1^{10}x_2^9$$

$$g_2 = -x_2^{11}+(11/2)x_1^5x_2^6\quad \quad 
g_3 = -x_1^{19}+ax_1^{14}x_2^{5}-(b/2)x_1^9x_2^{10}$$

and where $a=228$ and $b= 494$.

In particular $(h_1, h_2, h_3)R = I_2(\psi)$

Hence the minimal $R$-resolution for $IR$ is given by 

$$0\longto R(-31)\oplus R(-31) \longby{\psi} R(-12)\oplus R(-30) 
\oplus R(-20) \longto IR \longto 0.$$

$\B(12) = -1$, $\B(20) = -1$, $\B(30) = -1$ and $\B(31) = 2$.

Hence  the HK 
density of $(S,I)$ is given by 
$$\begin{array}{lcll}
f_{S,I}(x) & = & x/30 & \mbox{if}\quad 0\leq x \leq 6\\
& = & 6/30 & \mbox{if}\quad 6\leq x\leq 10\\
& = & (16-x)/30 & \mbox{if}\quad 10 \leq x\leq 15\\
& = & (31-2x)/30 & \mbox{if}\quad 15 \leq x\leq 31/2\\
& = & 0 & \mbox{otherwise}
\end{array}
$$
and hence $e_{HK}(R^G, I) = 2 -(1/120)$.

\end{ex}

\vspace{10pt}

\begin{rmk}If $(R,I)$ is  a two dimensional graded pair then its
HK density function $f_{R,I}$ is an explicit  piecewise linear polynomial 
with rational coefficients and rational break points (the proof follows from the same arguments as in [TW]): 

 Let $f_1, \ldots, f_s$ 
be a set of homogeneous generators of $I$ of degrees 
$d_1, \ldots, d_s$. Let ${\tilde S}$ denote the  normalization of 
$R^{(n_0)} = \oplus{n\geq 0} R_{nn_0}$, where 
$\mbox{gcd}\,\{m\mid R_m\neq 0\} = n_0$.
Let $X= \mbox{Proj}({\tilde S})$, Then for  the $\Q$-Weil divisor  $D$ (which is 
Cartier in this case) corresponding to  the normal ring ${\tilde S}$ 
(as in Theorem~\ref{D}) the sheaf $\sO_n = 
\sO_X(D)$ is invertible. Hence 
the sequence (\ref{e10}) is 
$$0\longto F^{n*}V\tensor \sO_m\longto \oplus_i\sO_{m+q-qd_i}\longby{\phi_{m,q}} 
\sO_{m+q}\longto 0,$$  
where 
$$0\longto V\longto \oplus_i\sO_{1-d_i}\longby{\phi} \sO_1\longto 0,$$  
where $\phi(x_1, \ldots, x_s) = \sum x_if_i$.
This gives
$$f_{R,I}(x) = f_{V,\sO_1}(x)-f_{\oplus_i\sO_{1-d_i}, \sO_1}(x),
\quad\mbox{for}\quad x\geq 0,$$ 
where, for a vector bundle $E$ on $X$ with strong HN data
$(\{a_1, \ldots, a_{l+1}\}, \{r_1, \ldots, r_{l+1}\})$ and $d = \deg\,\sO_1$, 
the function 
$f_{E, \sO_1}$ denotes the HK density function of $E$ with respect to 
$\sO_1$  and is given by 
$$\begin{array}{lcl}
 x < 1-a_1/d & \implies & f_{E,\sO_1}(x) =
-\left[\sum_{i=1}^{l+1}a_ir_i+d(x-1)r_i\right] \\
1-a_i/d  \leq x < 1-a_{i+1}/d  & \implies & f_{E,\sO_1}(x) =
- \left[\sum_{k={i+1}}^{l+1}a_kr_k+d(x-1)r_k\right].\end{array}$$
\end{rmk}

\end{document}

\section{$F$-threshold and Support of $f_{R,I}$}

In [TW] (Corollary~3.10), we have proved that for a standard 
graded pair $(R,I)$, the number $\alpha(R,I)$ (by definition the maximum support
 of the function $f_{R,I}$)
 is  $= c^I({\bf m})$ the $F$-threshold of ${\bf m}$ at $I$.
Here we show that there is an integer $r$ and an ${\bf m}$-primary 
ideal ${\frak a}$ such that $\alpha(R,I) = (r)c^I({\frak a})$,
for every graded pair $(R,I)$.

\begin{thm}\label{t3}Let $R$ be a Noetherian graded $F$-finite domain of dimension $\geq 2$. 
Let $I$ be a homogeneous ideal of finite colength. Suppose $R$ is 
$F$-regular on the punctured spectrum then 
there exists $r$ such that for $\frak a :=  \oplus_{n\geq 0}R_{r+n}$,
$\alpha(R,I) = r\cdot c^I(\frak a)$.
\end{thm}
\begin{proof}{\bf Claim}.\quad 
We can assume $R$ is a normal 
domain and $\mbox{gcd} \{n\mid R_n\neq 0\} = 1$.

\vspace{5pt}

\noindent{\underline{Proof of the claim}}:\quad If $n_0 = \mbox{gcd}~\{n\mid 
R_n\neq 0\}$ then 
$\alpha(R, I)  = \alpha(R^{(n_0)}, I)$, hence we can assume 
$n_0 = 1$.

Let $R\longto S$ be the normalization of the ring $R$ in its quotient field.
Since,  for every nonzero homogeneous 
element $f\in R$, the ring $R_f$ is a normal $F$-regular domain, the ring $S$ is 
$F$-regular on the punctured spectrum.
By Theorem~\ref{t1}, $\alpha(R, I) = \alpha(S, IS)$ and, by Proposition~2.2 of [HMTW] 
$c^I({\frak a}) = c^{IS}({\frak a}S)$.
This proves the claim.

\vspace{5pt}

Let $r$ be the integer such $\sO_r$ is a very ample line bundle on 
$X = \mbox{Proj}~R$. 
We define the sequence $\{h_n(R,I):[0, \infty)\longto [0, \infty)\}_{n\in \N}$ 
by 
$x\to ({1}/{q^{d-1}})
\ell(R/I^{[q]})_{\lfloor xq\rfloor r}$.
 
\vspace{5pt} 

\noindent{\bf Claim}.\quad The sequence $\{h_n(R, I)\}_n$ is uniformly convergent 
and the support of the function $\lim_{n\to \infty} h_n(R,I)\subseteq $ the 
interval $[0, \alpha(R, I)/r]$. 

\vspace{5pt}

\noindent{\underline{Proof of the claim}}: Enough to prove that
$\lim_{n\to \infty}h_n(R,I)(x) = f_{R,I}(rx)$, for all $x\geq 0$.

If $\lfloor xq\rfloor = m+q$ then 
$$\begin{array}{lll}
h_n(R,I)(x) & = & 
 \frac{1}{q^{d-1}}\left[\ell(R_{mr+qr}-\ell(\sum_{i=1}^sf_i^qR_{mr+qr-qd_i})
\right]\\\\
& = &  \frac{1}{q^{d-1}}\left[h^0(X, \sO_{mr+qr})-
\sum_ih^0(X, \sO_{mr+qr-qd_i}) + h^0(X, \sF_{mr+qr-q, q})\right].\end{array}$$

On the other hand, $\lfloor xqr\rfloor = (m+q)r+n_1$, for some $0\leq n_1 <r$
and therefore 
$$\begin{array}{c}
f_n(R,I)(rx)  = 
 \frac{1}{q^{d-1}}\left[\ell(R_{mr+qr+n_1}-\ell(\sum_{i=1}^sf_i^qR_{mr+qr+n_1-qd_i})
\right]\\\\
=  \frac{1}{q^{d-1}}\left[h^0(X, \sO_{mr+qr+n_1})-
\sum_ih^0(X, \sO_{mr+qr+n_1-qd_i}) + h^0(X, \sF_{mr+qr+n_1-q, q})\right].\end{array}$$
By Claim~(A) of the proof of Lemma~\ref{l2}, there is a constant
$C_r$ such that
$$|h_n(R,I)(x)- f_n(R,I)(rx)|\leq C_r/q.$$
Now the claim follows from Proposition~\ref{p1}.

Let   $c = c^I({\frak a})$. Then, for  $x > c$, we have  
${\frak a}^{\lfloor xq\rfloor}\subseteq I^{[q]}$. Therefore 
$R_r^{\lfloor xq \rfloor} = R_{r{\lfloor xq \rfloor}} \subseteq I^{[q]}$.
  This implies  
 $\ell(R/I^{[q]})_{\lfloor xq\rfloor r}= 0$, for all $x\geq c$.
Hence, by the above claim,
$\alpha(R,I) \leq rc$.

Conversely, Since $R$ is $F$-regular on the punctured spectrum, 
we can choose $n_0$ such that ${\frak a}^{n_0}\subseteq \tau(R)$ the test ideal of $R$.
Let $\beta\in\N[1/p]$ such that $\beta <c$.
Then we choose  $\epsilon>0$ such that $\beta+2\epsilon <c$.
Let $q_0$ such that $\beta q\in \N$ and $\epsilon q\geq n_0$, for $q\geq q_0$.
We can choose $q_1\geq q_0$ such  that ${\frak a}^{\beta q_1+
\lfloor{\epsilon q_1}\rfloor}\not\subseteq I^{[q_1]}$ and therefore 
 ${\frak a}^{\beta q_1}\not\subseteq I^{[q_1]*}$.

Since ${\frak a}^{\beta q_1}$ is generated by $R_{r\beta q_1}$ as $R$-module, 
we can choose $z\in R_{r\beta q_1}\setminus I^{[q_1]*}$.
Let $J = (z, I^{[q_1]})$. Since $\deg~z^q \geq r\beta qq_1$, 
$(I^{[qq_1]})_{\lfloor xq\rfloor } = (z^q, I^{[qq_1]})_{\lfloor xq\rfloor }$, 
for $x <r\beta q_1$.
Hence 
$$\int_0^{r\beta q_1}f_{R,I^{[q_1]}}(x)dx = 
\int_0^{r\beta q_1}f_{R, J}(x)dx.$$
But, by [HH1] and [A],  $e_{HK}(R,I^{[q_1]}) > e_{HK}(R,J)$. Therefore 
$f_{R,I^{[q_1]}}(x) \neq 0$, for some $x>r\beta q_1$.
 In particular $\alpha(R,I) >r\beta$, whenever $rc\geq r\beta$. 
Hence $rc^I({\frak a}) = \alpha(R, I)$.
This proves the theorem.
\end{proof}

If $R$ is a normal domain then we can give $r$ explicitly as follows.

\begin{cor}Let  $R$ be a $F$-finite $\N$-graded normal domain such that $X=\mbox{Proj}~R$ is a 
smooth variety. Let $m_1, \ldots, m_\mu$ denote the degree of a set of homogeneous 
generators of $R$ as an $R_0$-algebra. Let $l_1 = \mbox{lcm}~\{m_1, \ldots, m_\mu\}$. 
Then for $r \in  \mu\cdot l_1\N$ and for any homogeneous ideal $I\subset R$ of finite colength  
$$\alpha(R, I) = r\cdot c^I({\frak a}),\quad\mbox{where}\quad\frak a = \oplus_{n\geq 0}R_{n+r}.$$ 
\end{cor}
\begin{proof}By Remark~\ref{r1}, the sheaf $\sO_X(r)$ is a very ample line 
bundle on $X$. Moreover  $R$ is smooth  and hence $F$-regular on the puntured 
spectrum (by Theorem~5.10 of [HH2]).
Hence the result follows from Theorem~\ref{t3}.\end{proof}

\begin{defn}\label{d4}Given nonnegative integers $n_1, \ldots, n_m$, consider
a $m$-parallelotope $P = [0, n_1]\times \cdots\times [0, n_m]$. We define
a volume function
$$V_{m-1}(n_1, \ldots, n_m):[0, \infty)
\longto [0, \infty)~~~\mbox{given by}~~~
x\to \mbox{Vol}_{m-1}(P\cap H_x),$$
where $H_x = \{(y_1, \ldots, y_m)\in \R^m \mid \sum_iy_i
= x\}$ is a $m-1$-dimensional hyperplane in $\R^m$ and
$\mbox{Vol}_{m-1}$ is the  $(m-1)$-dimensional Euclidean volume.
\end{defn}

\begin{lemma}If $R$ is a graded domain of dimension $d\geq 2$ with  
$n_0 =$ $gcd \{n\mid R_n\neq 0\}$ and 
$I$ is generated by homogeneous elements of degrees 
$n_1, \ldots, n_d$ then
$$f_{R,I}(x) =  (n_0) \mbox{Vol}_{d-1}(n_1,\ldots, n_d)(xn_0)\left[\lim_{t\longto 1}(1-t)^dP(R,t)\right],$$
where $P(R,t) = \sum_{n\geq 0}\ell(R_n)t^n$ denotes 
the Poinca{r}e series of $R$.\end{lemma}
\begin{proof}Let $f_1, \ldots, f_d$ be a set of homogeneous generators of 
$I$ with degree $n_1, \ldots, n_d$ respectively.
 Let $S = k[f_1, \ldots, f_d]$ with $m_0 = \mbox{gcd}\{n\mid S_n\neq 0\}
= \mbox{gcd}\{n_1, \ldots, n_d\}$.

Let $J = (f_1,\ldots, f_d)\subset S$.
Then 
$$f_{R,I}(\frac{xm_0}{n_0}) = \frac{n_0}{m_0}\rank_SR f_{S,J}(x) = 
\frac{n_0}{m_0}e_0(R, I) f_{S,J}(x).$$ 
On the other hand
if we consider $S$ as a subring of ${\tilde R} = k[X_1, \ldots, X_d]$, sending 
$f_i\to X_i^{n_i}$ then
$$f_{S, J}(x) = \frac{m_0}{\rank_S{\tilde R}}f_{{\tilde R}, J{\tilde R}}(xm_0) 
= \frac{m_0}{n_1\cdots n_d}V_{d-1}(n_1, \ldots, n_d)(xm_0),$$
where the last equality follows, by Lemma~2.2 of [TW].
Hence 
$$f_{R,I}(x) = e_0(R, I)\frac{n_0}{n_1\ldots n_d}V_{d-1}(n_1, 
\ldots, n_d)(xn_0).$$
But, by Proposition~2.10 in [HTW]), 
$e_0(R,I) = (n_1\cdots n_d)\left[\lim_{t\longto 1}(1-t)^dP(R,t)\right],$
which proves the lemma.
\end{proof}

-----------------------------------------------------------------------
 It is enough to prove that the following statements are
equivalent.

\begin{enumerate}
\item[(a)] $m_0> 0$ is an integer divisible by $n_0$.
\item[(b)] There is $m_1>0$ such that $R_{mm_0}\neq 
0$, for all $m\geq m_1$, 
\item[(c)]  there is  a homogeneous element of degree $m_0$ in 
the quotient field of $R$.
\end{enumerate} $(b)\implies (a)$ and $(c)\implies (a)$ is obvious.
To  prove  $(a) \implies (b)$ and $(a)\implies (c)$, we can assume $n_0 = 1$.

Suppose $(a)$ holds. Then there exist nonzero elements 
$x_i\in R_{r_i}$ and  $y_j\in R_{s_j}$, and positive integers 
$l_i$ and $t_j$ such that, if 
${\bar r} = \sum_ir_il_i$ and ${\bar s} = \sum_js_jt_j$
then ${\bar r}-{\bar s} = 1$. Let ${\bar x} = \prod_i{x_i}^{l_i}$ and 
${\bar y} = \prod_j{y_j}^{t_j}$.

\noindent{ {$(a)\implies (b)$}}.\quad  We prove that (b) holds 
for  $m_1 = {\bar r}{\bar s}$: Let $m\geq m_1$ then 
$m= {\bar s}t+s_1$ for some integers $0\leq s_1 <{\bar s}$ and 
$0\leq t$. Then $m = {\bar s}(t-s_1{\bar s})+s_1{\bar r}$, where 
$t-s_1{\bar s} \geq 0$.
Hence $R_m$ has the nonzero element 
$({\bar y})^{t-s_1{\bar s}}({\bar x})^{s_1}$.

\noindent{{$(a)\implies (c)$}}.\quad 
As ${\bar x}/{\bar y}$ is a homogeneous  element
of degree $1$ in the quotient field of $R$. 
----------------------------------------------------------------------

\begin{lemma}\label{l1}Each $g_n(M,I) $ is a compactly supported continuous
function. 

---------------------------

\item More generally, if  ${\gcd}~\{n\mid R_n \neq 0\} = n_0$
Then $$f_{M, I} =  \sum_{i=0}^{n_0-1}(\rank {\tilde M_i})f_{S, I},$$
where, for every $0\leq i <n_0$, we define
${\tilde M_i} = \oplus_jM_{jn_0+i} \subset M$ and $ S = \oplus S_n
= \sum_nR_{nn_0}$ is 
the graded ring where $n^{th}$ degree component $S_n$ is  
$R_{nn_0}$, similarly $J_n =  I_{nn_0}$. 
\end{enumerate}
--------------------------------------------------------
We note that 
$$\lim_{n\to \infty}\frac{\ell(R_{\lfloor xq\rfloor l})}{q^{d_1-1}}
= \frac{e_0(R^{(n_1)}(xl_1)^{d_1-1}}{(d_1-1)!} = F_R(xl_1).$$
We also 
\vspace{5pt}

\noindent{\bf Claim}
$\lim_{n\to \infty}\frac{\ell(R/I^{[q]})_{\lfloor xq\rfloor l})}{q^{d_1-1}}
=f_{R,I}(xl_1)$.
\vspace{5pt}

\noindent{\underline{Proof of the claim}}: Recall that $\lfloor xq \rfloor l =
\lfloor xq \rfloor n_1l_1$. 
Also $\lfloor xl_1q \rfloor = \lfloor xq \rfloor l_1+n_0$, 
for some $0\leq n_0\leq l_1$.
Now,  there exists $m_0\geq 0$ such that $R^{{n_1}}_{m} \neq 0$, for 
every  $m\geq m_0$, we have the short exact sequence
This gives injective graded maps
$ R^{(n_1)}\longto R^{(n_1)}(2m_0)$
 and 
$R^{(n_1)}_{l_1}\longto R^{(n_1)}(2m_0)$, when $l_1 < m_0$
and $R^{(n_1)}\longto R^{(n_1)}(l_1)$, when $l_1\geq m_0$.
Now, by Lemma~\ref{l12}, 
we have a constant $c_{l_1}$ such that
$$|\ell(R^{(n_1)}/I^{[q]})_{\lfloor xq\rfloor l_1}-
\ell(R^{(n_1)}/I^{[q]})_{\lfloor xq\rfloor l_1+n_0}|\leq 
c_l(\lfloor xq\rfloor l_1)^{d_1-2}.$$
This proves the claim.

----------------------------------------------
Let  Then 
$h_i^{r/m_i}\in R_r$, for all $i$ and $\sO_r = \sO_N^{\tensor r/N}$ is a
line bundle on $X$. In particular the canonical map
induced map 
$$X= \mbox{Proj}~R = \mbox{Proj}~R^{r} \longto \P^{h^0(X,\sO_r)-1}_k$$ 
is a closed immersion.
---------------------------------------------------

\noindent{\bf Claim}.\quad There exists $m_2\in \N$ 
and a short exact sequence of sheaves of $\sO$-modules 
\begin{equation}\label{*}0\longto \oplus^{p^{d-1}}\sO_X({-m_2}D)\longby{\eta} 
F_*\sO_X\longto Q''\longto 0,\end{equation}
where support dimension $Q''$ is $<d-1$.

\vspace{5pt}

\noindent{\underline{Proof of the claim}}:\quad Let $x$ be the generic point 
of  $X$.
 We choose $f\in H^0(X,\sO_r)\setminus \{0\}$ (here $\sO_r$ is 
the ample invertible line bundle).
Let us denote $\Gamma(D_+(f), \sO_X) = A$ and 
$D_+(f)$ by $U_f$ and 
$S = A\setminus \{0\}$. Since $A$ is an integral domain of dimension $d-1$, 
the $A$-module  $F_*A$ is of rank $p^{d-1}$ over $A$.

 This implies that there is a $\sO_{X, x}$-linear isomorphism
$\phi:\oplus^{p^{d-1}}\sO_{X,x} \longrightarrow (F_*\sO_X)_{x}$, which
 is same as an $S^{-1}A$-linear isomorphism
$$\phi: \oplus^{p^{d-1}}S^{-1}\Gamma(U_f,\sO_X) \longrightarrow
S^{-1}\Gamma(U_f, F_*\sO_X).$$

We can choose ${\tilde s}\in S$ and a map 
 $${\tilde \phi}: \oplus^{p^{d-1}}\Gamma(U_f,\sO_X) \longrightarrow
\Gamma(U_f, F_*{\sO_X})$$
such that ${\tilde \phi}$ maps to ${\tilde s}\cdot\phi$ under the localization map
$$\Hom_A(\Gamma(U_f,\oplus^{p^{d-1}}\sO_X), \Gamma(U_f, F_*\sO_X))\longto 
\Hom_{S^{-1}A}S^{-1}(\Gamma(U_f, \oplus^{p^{d-1}}\sO_X),
\Gamma(U_f, F_*\sO_X)).$$

Now for ${\tilde \phi}\in \Gamma(U_f, {\mathcal Hom}_{\sO_X}(\oplus^{p^{d-1}}\sO_X, 
F_*\sO_X))$,  
  there exists $n\geq 1$ (see [H], Chap~II, Lemma~5.14) and
$\psi 
\in \Gamma(X, {\mathcal Hom}_{\sO_X}(\oplus^{p^{d-1}}\sO_X, F_*\sO_X)\tensor 
\sO_{rn})$ such that 
$$f^n\cdot {\tilde s}\cdot \phi\in 
\Gamma(U_f, {\mathcal Hom}_{\sO_X}(\oplus^{p^{d-1}}\sO_X, F_*\sO_X)\tensor 
\sO_{rn})\quad\mbox{extends to the  map}~~
\psi.$$
 
This gives an injective and  generically isomorphic map 
$\oplus^{p^{d-1}}\sO_{-nr}\longto F_*\sO_X$.
 Hence the claim.